\date{}
\newcommand{\CC}{\mathbb{C}}  
\newcommand{\NN}{\mathbb{N}}  
\newcommand{\RR}{\mathbb{R}}  
\newcommand{\Rea}{\operatorname{Re}}
\newcommand{\Ima}{\operatorname{Im}}
\theoremstyle{plain}
\newtheorem{definition}{Definition}
\numberwithin{equation}{section}
\newtheorem{theorem}{Theorem}[section]
\newtheorem{proposition}[theorem]{Proposition}
\newtheorem{lemma}[theorem]{Lemma}
\newtheorem{remark}[theorem]{Remark}
\newtheorem*{remarks}{Remarks}
  \title[]{Doubly connected V-states for the generalized surface quasi-geostrophic equations}
\author[F. de la Hoz]{Francisco de la Hoz}
\address{ Department of Applied Mathematics and Statistics and Operations Research, Faculty of Science and Technology \\
University of the Basque Country UPV/EHU, Barrio Sarriena S/N\\
 48940 Leioa, Spain 
 }
\email{francisco.delahoz@ehu.es}
\author[Z. Hassainia]{ZINEB HASSAINIA}
\address{IRMAR, Universit\'e de Rennes 1 \\ Campus de Beaulieu \\  35 042 Rennes cedex, France}
 \email{zineb.hassainia@univ-rennes1.fr}
 \author[T. Hmidi]{Taoufik Hmidi}
\address{IRMAR, Universit\'e de Rennes 1\\ Campus de
Beaulieu\\ 35~042 Rennes cedex\\ France}
\email{thmidi@univ-rennes1.fr}
\begin{document}
\subjclass[2000]{35Q35, 76B03, 76C05}
\keywords{ gSQG equations, V-states, doubly connected patches, bifurcation theory}

\begin{abstract}
In this paper, we prove  the existence of  doubly connected V-states for the generalized SQG equations with $\alpha\in ]0,1[.$ They can be described by countable branches  bifurcating from the annulus  at some explicit "eigenvalues" related to Bessel functions of the first kind. Contrary to Euler equations \cite{H-F-M-V}, we find V-states rotating  with positive and negative angular velocities. At  the end of the paper we discuss some numerical experiments concerning  the limiting V-states. \end{abstract}

\newpage 

\maketitle{}
\tableofcontents
%

\section{Introduction}
The present work deals with the   generalized   surface quasi-geostrophic  equation (gSQG) arising in fluid dynamics and  which describes the evolution  of the potential temperature $\theta$ by the transport  equation:

\begin{equation}\label{sqgch2}
\left\{ \begin{array}{ll}
\partial_{t}\theta+u\cdot\nabla\theta=0,\quad(t,x)\in\RR_+\times\RR^2, &\\
u=-\nabla^\perp(-\Delta)^{-1+\frac{\alpha}{2}}\theta,\\
\theta_{|t=0}=\theta_0.
\end{array} \right.
\end{equation}
Here $u$ refers to the velocity field, $\nabla^\perp=(-\partial_2,\partial_1)$  and $\alpha$ is a real parameter taken in  $]0,2[$. 
The singular operator   $(-\Delta)^{-1+\frac{\alpha}{2}}$ is  of convolution type and defined by,
\begin{equation}\label{Integ1}
(-\Delta)^{-1+\frac{\alpha}{2}} \theta(x)=\frac{C_\alpha}{2\pi}{\int}_{\RR^2}\frac{\theta(y)}{\vert x-y\vert^\alpha}dy,
\end{equation}
with  $C_\alpha=\frac{\Gamma(\alpha/2)}{2^{1-\alpha}\Gamma(\frac{2-\alpha}{2})}$ where $\Gamma$ stands for the gamma function.  This model  was proposed by C\'ordoba { et al.} in \cite{C-F-M-R} as an interpolation between Euler equations and the surface quasi-geostrophic model (SQG)  corresponding to $\alpha=0$ and $\alpha=1$, respectively. The SQG equation was used  by Juckes \cite{Juk} and Held {et al.} \cite{Held} to describe the atmosphere  circulation  near the tropopause. It  was also used   by Lapeyre and Klein \cite{Lap} to track the  ocean dynamics in  the upper layers. We note  that there is a strong  mathematical and physical analogy with the three-dimensional incompressible Euler equations; see \cite{C-M-T} for details. 

In the last few years there has been a growing interest in the mathematical study of these active scalar equations. Special attention has been paid to 
the local well-posedness of classical solutions  which can be performed in various functional spaces. For instance, this was implemented  in the framework of Sobolev spaces  \cite{C-C-C-G-W} by using the commutator theory. Wether or not these solutions are global in time is an open problem except for Euler equations $\alpha=0$. The second  restriction with the  gSQG equation concerns the construction of Yudovich solutions -- known to exist globally in time for  Euler equations \cite{Y} -- which are not at all clear even locally in time.  The main difficulty  is due to the velocity which  is in general singular and scales  below the Lipschitz class. Nonetheless one can say more about this issue for some special class of concentrated vortices. More precisely,  when the initial datum has a vortex patch structure, that is, $\theta_0(x)=\chi_D$ is  the characteristic function of a bounded simply connected smooth domain $D$, then there is a unique local solution in the patch form $\theta(t)=\chi_{D_t}. $ In this case,  the boundary   motion of the domain $D_t$  is described by the contour dynamics formulation; see the papers \cite{Gan,Ro}.  The global persistence of the boundary regularity is only known for $\alpha=0$  according to Chemin's result \cite{C}; for another proof see  the paper of Bertozzi and Constantin \cite{B-C}. Notice that for $\alpha>0$ the numerical experiments carried out in \cite{C-F-M-R} provide strong evidence for the   singularity formation  in finite  time. Let us mention that  the contour dynamics equation remains  locally well-posed when the domain of the initial patch  is assumed to be   multi-connected meaning that  the boundary is composed with  finite number of disjoint smooth Jordan curves.  
  
The main concern of  this work is to  explore  analytically and numerically some special vortex patches called V-states; they correspond to patches which do not change their shapes during the motion. The emphasis will be put on the V-states subject  to uniform rotation  around their center of mass, that is,  $D_t= {\bf R}_{x_0,\Omega t}D$, where $ {\bf R}_{x_0,\Omega t}$  stands for  the planar rotation with center $x_0$ and angle $\Omega t.$ The parameter $\Omega$ is called the angular velocity of the rotating domain. Along the chapter we call these structures rotating patches or simply V-states. Their existence  is of great interest for at least  two reasons: first they provide  non trivial initial data with global existence, and second this might explain the emergence of some ordered structures in the geophysical flows.
This study has been conducted first for  the two-dimensional  Euler equations ($\alpha=0$) a long time  ago  and a number of  analytical and numerical studies are known in   the literature. The first result in this setting   goes back to  Kirchhoff \cite{Kirc} who discovered that an ellipse of semi-axes $a$ and $b$  rotates uniformly  with the  angular velocity $\Omega = ab/(a+b)^2$; see for
instance the references \mbox{\cite[p. 304]{M-B}} and \cite [p. 232]{Lamb}. Till now this is  the only known explicit V-states; however the existence of  implicit examples was established about one century later.  In fact,  Deem and Zabusky \cite{DZ} gave numerical  evidence of  the existence of the 
V-states with $m$-fold symmetry for each  integer $m \geq 2$; remark that the  case $m=2$ coincides with  Kirchhoff's ellipses. To fix the terminology,  a planar 
domain is said  $m$-fold symmetric if it has the same group invariance of a regular polygon with $m$ sides.  Note that at each frequency $m$ these V-states can be seen as a continuous deformation of the disc with respect to  to the angular velocity. An analytical  proof of this fact  was given few years later by 
Burbea in \cite{Bur}. His  approach consists in writing a stationary  problem in the frame of the patch with the conformal mapping of the domain and to look for   the  non trivial solutions  by using the technique of the bifurcation theory.  Quite recently, in   \mbox{ \cite{HMV}}  Burbea's approach was revisited  with  more details and explanations. The boundary regularity of the V-states was also studied  and  it was  shown  to be  of class   $C^\infty$ and convex  close to the disc.

We mention   that  explicit vortex solutions similar to the ellipses  are discovered in the literature for the incompressible Euler equations in the presence of an  external shear flow; see for instance \cite{Chapl,Kida,Neu}. A general review about vortex dynamics can be found in the \mbox{papers  \cite{Ar,New}. }

With regard to  the existence of the simply connected  V-states for the (gSQG) it has been discussed very recently in the papers  \cite{Cor1,Cor,H-H}. In \cite{Cor}, it was shown that the ellipses cannot rotate for any $\alpha\in(0,2)$ and to the authors' best knowledge no explicit example is known in the literature.  Lately,  in \cite{H-H} the last two authors proved the analogous of  Burbea's result  and showed the  existence of the  $m$-folds rotating patches for $\alpha\in]0,1[$.  In addition, the bifurcation from the unit disc occurs at the angular velocities,
$$
\Omega_m^\alpha\triangleq\frac{\Gamma(1-\alpha)}{2^{1-\alpha}\Gamma^2(1-\frac\alpha2)}\bigg(\frac{\Gamma(1+\frac\alpha2)}{\Gamma(2-\frac\alpha2)}-\frac{\Gamma(m+\frac\alpha2)}{\Gamma(m+1-\frac\alpha2)}\bigg),\quad m\geq 2,
$$
where $\Gamma$ denotes the usual  gamma function.The remaining case $\alpha\in[1,2)$  has been explored  and  solved  by Castro, C\'ordoba and  G\'omez-Serrano in \cite{Cor1}. They also show that the V-states are $C^\infty$ and convex close to the discs. To complete these works  we  discussed in this work ( which is forwarded  in  \mbox{Section \ref{Sec-numch1}}) some numerical experiments concerning these V-states and their limiting structures when we go to the end of each branch; new  behaviors will be observed compared to the numerical experiments achieved for Euler \mbox{case \cite{S-Z}.}

We want in   this chapter to learn more about  the V-states but with different topological structure compared to the preceding discussion. More precisely, we propose to scrutinize  rotating patches with only one hole, also called  doubly  connected V-states.  Recall that a patch $\theta_0=\chi_{D}$ is said to be doubly connected if the   domain $D=D_1\backslash D_2$, with $D_1$ and $D_2$ being two simply connected  bounded  domains satisfying $\overline{D_2}\subset D_1.$  This structure is preserved for Euler system globally in time but known to be for short time when  $\alpha\in ]0,1[$ see \cite{C-C-C-G-W,Gan,Ro}.  We notice that compared to the simply connected case the boundaries evolve through  extra nonlinear terms coming from the interaction between the boundaries and therefore  the existence of the V-states is relatively more complicate to analyze. 
This problem is not well studied  from the analytical point of view and recent progress has been  made for Euler equations in the papers \cite{Flierl,HMV2, H-F-M-V}.  In \cite{HMV2}, the authers  proved  the existence of  explicit V-states similar to Kirchhoff ellipses seems to be out of reach. Indeed, it was stated  that if  one of the boundaries of the V-state is a circle then necessarily the other one  should be also a circle. Moreover, if the inner curve is an ellipse then there is no rotation at all.  Another closely related subject is to deal  with some vortex magnitude $\mu$ inside the  domain $D_2$ and try to find explicit rotating patches. This was done by Flierl and Polvani \cite{Flierl} who   proved that confocal ellipses rotate uniformly provided some compatibility relations  are satisfied between the parameter $\mu$ and the semi-axes of the ellipses. We note that another approach based upon complex analysis tools  with a complete discussion can be found \mbox{in \cite{HMV2}.}

Now,  from the equations \eqref{sqgch2} we may easily conclude that the   annulus is  a stationary  doubly connected patch,  and therefore it rotates with any angular velocity  $\Omega$. From this obvious fact, one can wonder wether or not  the bifurcation to nontrivial V-states still happens as for the simply connected case. This has been recently investigated
 in   \cite{H-F-M-V} for Euler equations following basically  Burbea's approach but with more involved calculations.  It was shown that for $b\in(0,1)$ and $m$ being an integer satisfying the inequality
\begin{equation}\label{condeulerc2}
1+b^m-\frac{1-b^2}{2}m<0
\end{equation}
then there exist two curves  of  non-annular $m$-fold doubly connected patches  bifurcating from the annulus $\big\{z; b<|z|< 1\big\}$ at different eigenvalues $\Omega_m^\pm$ given explicitly by the formula
$$
\Omega_m^\pm=\frac{1-b^2}{4}\pm\frac{1}{2m}\sqrt{\Big[\frac{m}{2}{(1-b^2)}-1\Big]^2-b^{2m}}.
$$ 
 Now we come to the main contribution of the current work. We propose to study the doubly connected V-states for the gSQG model \eqref{sqgch2}  when   $\alpha\in ]0,1[$.  Before stating our result we need to make some  notation. We define 
  \begin{equation*}
\Lambda_n(b)\triangleq \frac1b\int_0^{+\infty}{J_n(bt)J_n(t)}\frac{dt}{t^{1-\alpha}},
\end{equation*}
and
\begin{eqnarray*}
\Theta_{n}&\triangleq& \Lambda_1(1)-\Lambda_n(1),
\end{eqnarray*}
where $J_n$ refers to  Bessel function of the first kind. Our result reads as follows.

\begin{theorem}\label{main}
Let  $\alpha\in[0,1[$ and $b\in]0,1[$; there exists $N\in \NN$ with the following property: \\
For each   $m> N$ there exists two curves of  $m$-fold  doubly
connected $V$-states  that bifurcate from the annulus $\big\{z\in \CC, b < |z| < 1
\big\}$ at  the angular velocities
$$
\Omega^{\alpha, \pm}_m\triangleq \frac{1-b^2}{2}\Lambda_1(b)+\frac12(1-b^{-\alpha})\Theta_m\pm\frac12\sqrt{\Delta_m(\alpha,b)},
$$
with
\begin{eqnarray*}
\Delta_m(\alpha,b)&\triangleq & \Big[(b^{-\alpha}+1)\Theta_{m}-(1+b^2)\Lambda_{1}(b)\Big]^2-4b^2\Lambda_{m}^2(b).
\end{eqnarray*}
\end{theorem}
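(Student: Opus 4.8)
The plan is to recast the uniform rotation condition as a bifurcation problem from the trivial branch (the annulus) and to apply the Crandall--Rabinowitz theorem. First I would place myself in the frame rotating with angular velocity $\Omega$, where the V-state condition becomes stationary, and parametrize the two boundaries by exterior conformal maps $\phi_1(w)=w+\sum_{n\geq 1}a_nw^{-n}$ and $\phi_2(w)=bw+\sum_{n\geq 1}c_nw^{-n}$ for $|w|=1$. Feeding the patch $\theta=\chi_{D_1\setminus D_2}$ into \eqref{sqgch2} and using the convolution representation \eqref{Integ1} of the velocity, the rotation law $D_t=\mathbf R_{x_0,\Omega t}D$ translates into a coupled system $G(\Omega,\phi_1,\phi_2)=(G_1,G_2)=0$ on the unit circle, where each $G_j$ is the real part of a boundary integral built from the gSQG kernel $|x-y|^{-\alpha}$. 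The annulus $\phi_1(w)=w,\ \phi_2(w)=bw$ solves this system for every $\Omega$, which furnishes the trivial branch from which we bifurcate.

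Next I would fix the functional framework: perturbations $(f_1,f_2)$ living in an $m$-fold symmetric Hölder class $C^{1+\beta}$ (Fourier support in multiples of $m$), and define a map $F\colon\RR\times X\to Y$ between the corresponding Banach spaces. The analytically heavy step is to show that $F$ is well defined and of class $C^1$. Since for $\alpha\in]0,1[$ the velocity scales below the Lipschitz class, I would split each boundary integral into the singular self-interaction on a single curve and the interactions between $\partial D_1$ and $\partial D_2$. The cross terms are smoothing because the two curves remain separated ($\overline{D_2}\subset D_1$), so they pose no difficulty; the self-interaction is governed by the same commutator and kernel estimates already developed for the simply connected case, so that $F$ inherits $C^1$ regularity near the annulus.

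The core of the argument is the linearization $\partial_{(f_1,f_2)}F(\Omega,0)$ at the annulus. By rotational invariance it is a Fourier multiplier, and on the $n$-th mode it reduces to a $2\times 2$ matrix $M_n(\Omega)$ whose entries are assembled from the quantities $\Lambda_n(b)$ and $\Theta_n$: the diagonal carries the self-interaction contributions, while the off-diagonal coupling encodes the interaction between the two boundaries and produces the factor $\Lambda_m(b)$. The Bessel functions enter precisely here, since the Fourier--Hankel transform of $|x-y|^{-\alpha}$ evaluated on two concentric circles of radii $b$ and $1$ yields exactly $\Lambda_n(b)=\frac1b\int_0^{+\infty}J_n(bt)J_n(t)\frac{dt}{t^{1-\alpha}}$. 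The kernel of $F$ at $(\Omega,0)$ is nontrivial exactly when $\det M_m(\Omega)=0$; this determinant is quadratic in $\Omega$, and solving it gives the two roots $\Omega_m^{\alpha,\pm}$ with discriminant $\Delta_m(\alpha,b)$, which is how the stated eigenvalues arise.

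To conclude I would verify the Crandall--Rabinowitz hypotheses at $\Omega=\Omega_m^{\alpha,\pm}$: (i) the kernel is one-dimensional, i.e. $M_m$ is singular while $M_n$ is invertible for every $n\neq m$; (ii) the range of $\partial_{(f_1,f_2)}F$ has codimension one; and (iii) the transversality condition $\frac{d}{d\Omega}\det M_m\neq 0$ at each root, which amounts to the roots being simple. The main obstacle is precisely (i) and (iii). Both demand, for all $m>N$, that $\Delta_m(\alpha,b)>0$ (so that the roots are real and distinct, the analogue of \eqref{condeulerc2}) and that neither $\Omega_m^{\alpha,+}$ nor $\Omega_m^{\alpha,-}$ collides with any eigenvalue $\Omega_n^{\alpha,\pm}$ for $n\neq m$. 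These reduce to sharp monotonicity and large-$n$ asymptotics of the Bessel integrals $\Lambda_n(b)$ and of $\Theta_n$; controlling their decay and sign is the technical crux, and it is exactly the possible failure of positivity of $\Delta_m$ and of the separation of eigenvalues for small $m$ that forces the threshold $N$.
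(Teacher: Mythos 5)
Your proposal follows essentially the same route as the paper: conformal parametrization of the two boundaries, reduction to a coupled system $G(\Omega,f_1,f_2)=0$ with the annulus as trivial branch, $C^1$ regularity obtained by separating the singular self-interaction (inherited from the simply connected case) from the smoothing cross-interaction, a Fourier-multiplier linearization with $2\times2$ matrices whose determinant is quadratic in $\Omega$, and Crandall--Rabinowitz once $\Delta_m>0$ and eigenvalue separation are secured through monotonicity and asymptotics of $\Theta_n$ and $\Lambda_n(b)$ for $m$ beyond a threshold $N$. You also correctly identify that transversality reduces to simplicity of the root of $\det M_m(\Omega)$, which is exactly how the paper disposes of it.
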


\begin{remarks}\label{rmw1}
\begin{enumerate}
\item The number  $N$ is 
the smallest  integer such that 
\begin{equation}\label{condsqgc2}
\Theta_{N}\geq\frac{1+b^2}{b^{-\alpha}+1}\Lambda_1(b)+\frac{2b}{b^{-\alpha}+1}\Lambda_N(b).
\end{equation}
This restriction appears in the spectral study of the linearized operator and gives only sufficient condition for the existence of the V-states.  
\item As we shall see later in Lemma \ref{lem1} ,  for $\alpha=0$  
we find  the  result of  Euler equations established in \cite{H-F-M-V} and the condition $\eqref{condsqgc2}$ is in accordance with that  given by $\eqref{condeulerc2}.$ 

 \item We can check by using the strict monotonicity of $b\mapsto \Lambda_1(b)$  that for any $b,\alpha\in(0,1),$
$$
\lim_{m\to+\infty}\Omega_m^{\alpha,-}=-b^{-\alpha}\Lambda_1(1)+\Lambda_1(b)<0.
$$
Consequently the corresponding bifurcating curves  generate close to the annulus non trivial  clockwise  doubly connected V-states.  This fact is completely  new compared to what we know for Euler equations or for the simply connected case where the bifurcation occurs at positive angular velocities. The numerical experiments discussed in Section  $\ref{Sec-num}$ reveal  the existence of  non radial stationary patches  for the generalized quasi-geostrophic equations and it would be very interesting to  establish this fact analytically. In a connected subject,  we point out that the last author has shown quite recently in \cite{Hmidi}  that for Euler equations  clockwise convex  V-states reduce to the discs. 

\end{enumerate}
\end{remarks}


Now we shall sketch the proof of  Theorem \ref{main} which is mainly based  upon the bifurcation theory via  Crandall-Rabinowitz's Theorem. The first step is to write down the  analytical equations of the boundaries of the V-states. This can be done for example through the conformal parametrization of the domains $D_1$ and $D_2$: we denote by $\phi_j:\mathbb{D}^c\to D_j^c$  the conformal mappings possessing   the following structure,
$$
\forall \,|w|\geq1,\quad \phi_1(w)=w+\sum_{n\in\NN} \frac{a_n}{w^n},\quad \phi_2(w)= b w+\sum_{n\in\NN} \frac{c_n}{w^n}.
$$
We assume in addition that the Fourier coefficients are real which means that we look only  for  the V-states which are symmetric with respect to the real axis. Moreover using the subordination principle we deduce that $b\in ]0,1[$; the parameter $b$ coincides with the small radius of the annulus that we slightly  perturb. As we shall see later in Section \ref{sec12},  the conformal mappings  are subject to two coupled nonlinear equations  defined as follows: for $j\in\{1,2\}$
\begin{eqnarray*}
F_j(\Omega, \phi_1,\phi_2)(w)&\triangleq& \Ima\Bigg\{ \Big(\Omega\,\phi_j(w)+ S(\phi_2,\phi_j)(w)-S(\phi_1,\phi_j)(w)\Big)\overline{w}\,{\overline{\phi_j'(w)}}\Bigg\}\\
&=&0,\quad\forall w\in \mathbb{T},
\end{eqnarray*}
with
$$
 S(\phi_i,\phi_j)(w)={C_\alpha}\fint_\mathbb{T}\frac{\phi_i'(\tau)}{\vert \phi_j(w)-\phi_i(\tau)\vert^\alpha}d\tau\quad\hbox{and}\quad C_\alpha\triangleq\frac{\Gamma(\alpha/2)}{2^{1-\alpha}\Gamma(\frac{2-\alpha}{2})}\cdot
$$
In order to apply  the bifurcation theory  we should understand the structure of the linearized operator around the trivial solution $(\phi_1,\phi_2)=(\hbox{Id},b\,\hbox{Id})$, corresponding to the annulus with radii $b$ and $1$, 
 and  identify the range of $\Omega $ where this operator has a one-dimensional kernel. The computations of the linear operator $DF(\Omega,\hbox{Id},b\,\hbox{Id})$ with $F=(F_1,F_2)$ in terms of its Fourier coefficients   are long and tricky. They are    connected  to the hypergeometric functions ${}_{2}F_{1}(a,b;c;z)$ simply denoted by $F(a,b;c;z)$ throughout this chapter.  To find compact  formula we use at several steps some algebraic identities described by the contiguous function relations \eqref{f0}-\eqref{f5}. Similarly to the Euler equations \cite{H-F-M-V}  the linearized operator acts as a matrix Fourier multiplier. More precisely, for 
 $${ h_1(w)=\sum_{n\geq1}\frac{a_n}{w^n}}, \quad \displaystyle h_2(w)=\sum_{n\geq1}\frac{c_n}{w^n},$$
 we  obtain the formula,
   \begin{eqnarray}\label{spec11}
\nonumber DF(\Omega,\hbox{Id},b\,\hbox{Id})\big(h_1,h_2\big)(w)=\frac{i}{2}\sum_{n\geq 1}\big(n+1\big)M^\alpha_{n+1}\left( \begin{array}{c}
a_n \\
c_n
\end{array} \right)\Big(w^{n+1}-\overline{w}^{n+1}\Big),
\end{eqnarray}
where the matrix $M_n$ is given for $n\geq2$  by
\begin{equation*}
M^\alpha_{n}\triangleq\begin{pmatrix}
 \Omega-\Theta_{n}+b^2\Lambda_{1}(b) & -b^2\Lambda_{n}(b) \\
 \\
  b\Lambda_n(b) & b\Omega+b^{1-\alpha}\Theta_{n}-b\Lambda_1(b)
\end{pmatrix}.
\end{equation*}
Therefore  the values of $\Omega$ associated to  non trivial kernels are the solutions of a  second-degree polynomial,
\begin{equation}\label{det22}
\hbox{det }M^\alpha_n=0.
\end{equation}
This can be solved  when the discriminant $\Delta_n(\alpha,b)$ introduced in Theorem \ref{main} is positive. The computations of the  dimension of the kernel are more complicate than the cases raised before in the references  \cite{H-H,H-F-M-V}.  The matter reduces to count  the following discrete set
$$
\big\{n\geq2, \hbox{det }M^\alpha_n=0\big\}.
$$
Note that in    \cite{H-H,H-F-M-V}  this set has only one element and therefore the kernel is one-dimensional. This follows from  the  monotonicity of the "nonlinear eigenvalues" sequence $n\mapsto \Omega$ which is not very hard to get  due to the explicit polynomial structure  of the coefficients of the analogous  polynomial  to \eqref{det22}. Unfortunately, in the current situation  this   structure is broken   because the matrix coefficients of  $M_n^\alpha$ are related to Bessel functions.  Therefore  the monotonicity  of the eigenvalues is more subtle and will require more refined analysis. 
This subject will be discussed later with ample   details  in the Subsection \ref{subsec12}. To achieve the spectral study and check the complete assumptions of Crandall-Rabinowitz's Theorem it remains to prove the transversality assumption and check that the image is of co-dimension one. This will be done in Section \ref{sec45} in a straightforward way and without serious difficulties. We also mention that the transversality assumption is obtained only when the discriminant $\Delta_n(\alpha,b)>0$ meaning that there is no crossing roots for  the equation \eqref{det22}.  The proof of the bifurcation will be achieved in Section \ref{sec45}.
%
Next, we shall make few  comments about the statement of the main theorem.
\begin{remarks}


${\bf{1)}}$  For the SQG equation corresponding to $\alpha=1$ the situation is more delicate due to some logarithmic loss. The simply connected case has been achieved recently in \cite{Cor1} by using Hilbert spaces where we take into account this loss. The same approach could lead to the existence of the doubly-connected V-states for the (SQG) equation. For the spectral study, the linearized operator can be obtained as a limit of \eqref{spec11} when $\alpha$ goes to $1.$ More precisely, we get

\begin{eqnarray*}
\nonumber DF(\Omega,\hbox{Id},b\,\hbox{Id})\big(h_1,h_2\big)(w)=\frac{i}{2}\sum_{n\geq 1}\big(n+1\big)M^1_{n+1}\left( \begin{array}{c}
a_n \\
c_n
\end{array} \right)\Big(w^{n+1}-\overline{w}^{n+1}\Big),
\end{eqnarray*}
where the matrix $M_n$ is given for $n\geq2$  by
\begin{equation*}
M^1_{n}\triangleq\begin{pmatrix}
 \Omega-\Theta_{n}+b^2\Lambda_{1}(b) & -b^2\Lambda_{n}(b) \\
 \\
  b\Lambda_n(b) & b\Omega+\Theta_{n}-b\Lambda_1(b)
\end{pmatrix},
\end{equation*}
with
\begin{equation*}
\Theta_n=\frac{2}{\pi}\sum_{k=1}^{n-1}\frac{1}{2k+1}\quad\hbox{and}\quad \Lambda_n(b)=\frac1b\int_0^{+\infty}J_n(bt)J_n(t)dt.
\end{equation*}
\vspace{0,3cm}

${\bf{3)}}$ The  boundary  of the V-states  belongs  to H\"older space $C^{2-\alpha}$. For Euler equations corresponding to $\alpha=0$, we get better result in the simply connected geometry as  it was shown \mbox{in \cite{HMV};} the boundary  is $C^\infty$ and convex when the V-states are close to the circle. The proof in this particular case uses in a deep way the algebraic structure of the kernel according to some recurrence formulae. The extension of this result  to  $\alpha\in ]0,2[$ was done in  \cite{Cor1}.  We expect that the latter approach  could  be also adapted to the {\hbox{$($gSQG$)$}} model for the doubly connected case.
\vspace{0,3cm}

${\bf{4)}}$ In the setting  of the vortex patches   the global existence with smooth boundaries is not known for $\alpha\in]0,2[$. The simply connected V-states discussed in \cite{Cor1, H-H} offer a first class of global solutions which are  periodic in time. We find here a second class of global solutions which are the doubly connected V-states.\vspace{0,3cm}

\end{remarks}

The remainder of the chapter is organized as follows. In the next section, we shall write down the boundary equations through the conformal parametrization. In \mbox{Section $3$,}  we shall introduce and review  some background material on the bifurcation theory and Gauss hypergeometric functions. In Section $4$, we will study the regularity of the nonlinear functionals involved in the boundary equations.  
In Section $5,$ we conduct the spectral study and formulate the suitable assumptions to get a Fredholm operator of zero index. In Section $6$ we prove Theorem \ref{main}. Finally, the last section will be devoted to some numerical experiments dealing with the simply and doubly connected V-states.

\vspace{0,2cm}

{\bf {Notation.}}
We need to fix some notation that will  be frequently used along this chapter.
\begin{enumerate}
\item[$\bullet$] We denote by C any positive constant that may change from line to line.
\item[$\bullet$] For any positive real numbers $A$ and $B$, the notation $A \lesssim B$ means that there exists a
positive constant $C$ independent of $A$ and $B$ such that $A\leq CB$.
\item[$\bullet$] We denote by $\mathbb{D}$ the unit disc. Its boundary, the unit circle, is denoted by  $\mathbb{T}$. 
\item[$\bullet$] Let $f:\mathbb{T}\to \CC$ be a continuous function. We define  its  mean value by,
$$
\fint_{\mathbb{T}} f(\tau)d\tau\triangleq \frac{1}{2i\pi}\int_{\mathbb{T}}  f(\tau)d\tau,
$$
where $d\tau$ stands for the complex integration.
\item[$\bullet$] Let $X$ and $Y$ be two normed spaces. We denote by $\mathcal{L}(X,Y)$ the space of  all continuous linear maps $T: X\to Y$ endowed with its usual strong topology. 
\item[$\bullet$] For a linear operator $T:X\to Y,$ we denote by $N(T)$ and $R(T)$  the kernel and the range of $T$, respectively.  
 \item[$\bullet$] If $Y$ is a vector space and $R$ is a subspace, then $Y/ R$ denotes the quotient space.

\end{enumerate}

\section{Boundary equations }\label{sec12}

Before proceeding further with the consideration of the V-states,  we shall recall Riemann mapping theorem which  is one of the most important results in complex analysis. To restate this result we need to recall the definition of {\it simply connected} domains. Let $\widehat{\mathbb{C}}\triangleq \mathbb{C}\cup\{\infty\}$ denote the Riemann sphere. We  say that a domain $\Omega\subset \widehat{\mathbb{C}}$ is {\it simply connected} if the set $ \widehat{\mathbb{C}}\backslash \Omega$ is connected.

{\it Riemann Mapping Theorem.} Let $\mathbb{D}$ denote the unit open ball and  $\Omega\subset \CC$ be a simply connected bounded domain. Then there is a unique bi-holomorphic map called also conformal map,  $\Phi: \CC\backslash\overline{\mathbb{D}}\to  \CC\backslash\overline{\Omega}$ taking the form
$$
\Phi(z)= az+\sum_{n\in\NN} \frac{a_n}{z^n} \quad \textnormal{with}\quad a>0.
$$
 In this theorem the regularity of the boundary has no effect regarding the existence of the conformal mapping  but  it contributes in the boundary behavior of the conformal mapping, see for instance \cite{Po,WS}. Here, we shall recall the following result. 
\vspace{0,2cm}


{\it  Kellogg-Warschawski's theorem.} It can be found in  \cite{WS} or in  \cite[Theorem 3.6]{Po}.  It asserts that if   the conformal map
$\Phi:\CC\backslash\overline{\mathbb{D}}\to  \CC\backslash\overline{\Omega}$ has a continuous
extension to $\CC\backslash\mathbb{D}$  which is of \mbox{class
$C^{n+1+\beta},$} with
$n\in \NN$  and $ 0<\beta<1$,  then the
boundary $\Phi(\mathbb{T})$ is a Jordan curve of class $C^{n+1+\beta}.$

Next, we shall write down the equation governing the boundary of the doubly connected  V-states.
Let $D=D_1\backslash D_2$ be a doubly connected domain, that is,   $D_1$ and $D_2$ are two simply connected domains with $D_2\subset D_1$. Denote by $\Gamma_1$ and $\Gamma_2$ their boundaries, respectively. Consider the parametrization by the conformal mapping: $\phi_j: \mathbb{D}^c\to D_j^c$ satisfying
$$
\phi_1(z)=z+f_1(z)=z\Big(1+\sum_{n=1}^{\infty}\frac{a_{n}}{z^{n}}\Big),
$$
and
$$
\phi_2(z)=bz+f_2(z)=z\Big(b+\sum_{n=1}^{\infty}\frac{b_{n}}{z^{n}}\Big), \quad 0<b<1.
$$
Now assume that $\theta_0=\chi_D$ is a rotating patch for the model \eqref{sqgch2} then according to \cite{H-H} the boundary equations are given by
\begin{eqnarray}\label{model0}
\nonumber\Omega\Rea\big\{z\overline{z'}\big\}&=& C_\alpha\Ima \Bigg\{\frac{1}{2\pi}\int_{\partial D}\frac{d\zeta}{\vert z-\zeta\vert^\alpha}\overline{z'}\Bigg\},\quad \quad \forall\, z\in \partial D=\Gamma_1\cup\Gamma_2.\\
\quad&=& C_\alpha\Ima\Bigg\{\Big(\frac{1}{2\pi}\int_{\Gamma_1}\frac{d\zeta}{\vert z-\zeta\vert^\alpha}-\frac{1}{2\pi}\int_{\Gamma_2}\frac{d\zeta}{\vert z-\zeta\vert^\alpha}\Big)\overline{z'}\Bigg\},
\end{eqnarray}
where $z^\prime$ denotes a tangent vector to the boundary $\partial D$ at the point $z.$
We shall now rewrite the equations by using the conformal parametrizations $\phi_1$ and $\phi_2$.
First remark that for $w\in \mathbb{T}$ a tangent vector on the boundary $\Gamma_j$ at the point $\phi_j(w)$ is given by
$$
\overline{z'}=-i\overline{w}\, {\overline{\phi_j'(w)}}.
$$
Inserting this into the  equation \eqref{model0} and using the change of variables $\tau=\phi_j(w)$ give
\begin{eqnarray*}
\forall \, w\in \mathbb{T},\quad F_j\big(\Omega,\phi_1,\phi_2\big)(w)=0;\quad j=1,2,
\end{eqnarray*}
with
\begin{eqnarray}\label{g_1}
\nonumber &&F_j\big(\Omega,\phi_1,\phi_2\big)(w)\triangleq \Omega\,\Ima\Big\{\phi_j(w)\overline{w}\,{\overline{\phi_j'(w)}}\Big\}\\
&+&{C_\alpha}\Ima\Bigg\{\bigg(\fint_\mathbb{T}\frac{\phi_2'(\tau)d\tau}{\vert \phi_j(w)-\phi_2(\tau)\vert^\alpha}-\fint_\mathbb{T}\frac{\phi_1'(\tau)d\tau}{\vert \phi_j(w)-\phi_1(\tau)\vert^\alpha}\bigg) \overline{w}\,{\overline{\phi_j'(w)}}\Bigg\}
\end{eqnarray}
and \quad $\displaystyle{C_\alpha\triangleq\frac{\Gamma(\alpha/2)}{2^{1-\alpha}\Gamma(\frac{2-\alpha}{2})}}$. We shall introduce  the functionals
\begin{equation*}
G_j(\Omega,f_1,f_2)\triangleq F_j\big(\Omega,\phi_1,\phi_2\big)\quad j=1,2.
\end{equation*}
Then equations of the V-states become,
\begin{eqnarray}\label{Rotaeq1}
\forall w\in \mathbb{T},\quad G_j\big(\Omega,f_1,f_2\big)(w)=0,\quad j=1,2.
\end{eqnarray}
Now it is easy to ascertain that the annulus is a rotating patch for any $\Omega\in \RR$. Indeed, replacing $\phi_1$ and $\phi_2$ in \eqref{g_1} by $\hbox{Id}$ and $b\,\hbox{Id}$, respectively,   we get
\begin{eqnarray*}
\nonumber F_1\big(\Omega,\hbox{Id},b\,\hbox{Id}\big)(w)&=&{C_\alpha}\Ima\bigg\{b\overline{w}\mathop{{\fint}}_\mathbb{T}\frac{d\tau}{\vert w-b\tau\vert^\alpha}-\overline{w}\mathop{{\fint}}_\mathbb{T}\frac{d\tau}{\vert w-\tau\vert^\alpha}\bigg\}.
\end{eqnarray*}
Using  the change of variables $\tau=w\zeta$ in the two preceding integrals  we find
\begin{eqnarray*}
\nonumber F_1\big(\Omega,\hbox{Id},b\,\hbox{Id}\big)(w)&=&{C_\alpha}\Ima\bigg\{b\mathop{{\fint}}_\mathbb{T}\frac{d\zeta}{\vert 1-b\zeta\vert^\alpha}-\mathop{{\fint}}_\mathbb{T}\frac{d\zeta}{\vert 1-\zeta\vert^\alpha}\bigg\}.
\end{eqnarray*}
Note that each integral in the right side is real since,
\begin{eqnarray*}
\forall a\in (0,1],\quad\overline{\mathop{{\fint}}_\mathbb{T}\frac{d\zeta}{\vert 1-a\zeta\vert^\alpha}} &=& -\mathop{{\fint}}_\mathbb{T}\frac{d\overline{\zeta}}{\vert 1-a\overline{\zeta}\vert^\alpha}\\ &=& \mathop{{\fint}}_\mathbb{T}\frac{d\xi}{\vert 1-a\xi\vert^\alpha}\cdot
\end{eqnarray*}
Therefore we obtain,
\begin{eqnarray*}
\forall \, w\in \mathbb{T},\quad F_1\big(\Omega,\hbox{Id},b\,\hbox{Id}\big)(w)&=&0.
\end{eqnarray*}
Arguing similarly for the second component $F_2$ we get for any $w\in \mathbb{T},$
\begin{eqnarray*}
\nonumber F_2\big(\Omega,\hbox{Id},b\,\hbox{Id}\big)(w)={C_\alpha}\Ima\bigg\{b^{1-\alpha}\mathop{{\fint}}_\mathbb{T}\frac{d\zeta}{\vert 1-\zeta\vert^\alpha}-\mathop{{\fint}}_\mathbb{T}\frac{d\zeta}{\vert b-\zeta\vert^\alpha}\bigg\}= 0,
\end{eqnarray*}
which is the desired result.

%

\section{Tools}
In this section  we shall recall in the first part some simple facts about H\"older spaces on the unit circle $\mathbb{T}$. In the second part we  state   Crandall-Rabinowitz's Theorem which is a crucial tool in the proof of Theorem \ref{main}. We shall also recall some important  properties  of the hypergeometric functions which appear in a natural way in the spectral study of the linearized operator. The last part is devoted to  the computations of some integrals used later in the spectral study. 
\subsection{Functional spaces}
 Throughout this chapter it is more convenient  to think of  $2\pi$-periodic function $g:\RR\to\CC$ as a function of the complex variable $w=e^{i\eta}$. To be more precise, let  $f:\mathbb{T}\to \RR^2$, be a continuous function, then it  can be assimilated to  a $2\pi-$ periodic function $g:\RR\to\RR$ via the relation
$$
f(w)=g(\eta),\quad w=e^{i\eta}.
$$
Hence when  $f$ is  smooth enough we get
$$
f^\prime(w)\triangleq\frac{df}{dw}=-ie^{-i\eta}g'(\eta).
$$  
Because  $d/dw$ and $d/d\eta$ differ only by a smooth factor with modulus one  we shall  in the sequel work with $d/dw$ instead of $d/d\eta$ which appears more suitable in the computations.\\
Moreover, if $f$ has real Fourier coefficients and is of class $C^1$ then  we can easily check that 
\begin{equation}\label{deriv-bar}
{\{\overline{f}\}^\prime}(w)=-\frac{1}{w^2}\overline{f^\prime(w)}.
\end{equation}
Now we shall introduce  H\"older spaces  on the unit circle $\mathbb{T}$.
\begin{definition}
Let 
$0<\gamma<1$. We denote by $C^\gamma(\mathbb{T}) $  the space of continuous functions $f$ such that
$$
\Vert f\Vert_{C^\gamma(\mathbb{T})}\triangleq \Vert f\Vert_{L^\infty(\mathbb{T})}+\sup_{\tau\neq w\in \mathbb{T}}\frac{\vert f(\tau)-f(w)\vert}{\vert \tau-w\vert^\alpha}<\infty.
$$
For any integer $n$, the space $C^{n+\gamma}(\mathbb{T})$ stands for the set of functions $f$ of class $C^n$ whose $n-$th order derivatives are H\"older continuous  with exponent $\gamma$. It is equipped with the usual  norm,
$$
\Vert f\Vert_{C^{n+\gamma}(\mathbb{T})}\triangleq \Vert f\Vert_{L^\infty(\mathbb{T})}+\Big\Vert \frac{d^n f}{dw^n}\Big\Vert_{C^\gamma(\mathbb{T})}.
$$ 
\end{definition}
Recall that the Lipschitz semi-norm is defined by,
$$
\|f\|_{\textnormal{Lip}(\mathbb{T})}=\sup_{\tau\neq w\in\mathbb{T}}\frac{|f(\tau)-f(w)|}{|\tau-w|}\cdot
$$
Now we list some classical properties that will be used later at many places.
\begin{enumerate}
\item For $n\in \mathbb{N}, \gamma\in ]0,1[$ the space $C^{n+\gamma}(\mathbb{T})$ is an algebra.
\item For $K\in L^1(\mathbb{T})$ and $f\in C^{n+\gamma}(\mathbb{T})$ we have the convolution law,
$$
\|K*f\|_{C^{n+\gamma}(\mathbb{T})}\le \|K\|_{L^1(\mathbb{T})}\|f\|_{C^{n+\gamma}(\mathbb{T})}.
$$
\end{enumerate}

 \subsection{Elements of the bifurcation theory}
 We intend now  to recall Crandall-Rabinowitz's Theorem which is a basic tool of the  bifurcation theory and  will be useful in the proof of \mbox{Theorem \ref{main}.} Let $F:\RR\times X\to Y$ be a  continuous function with $X$ and $Y$ being   two Banach spaces. Assume that $F(\lambda, 0)=0$ for any $\lambda$ belonging in a non empty interval $I.$ Whether close to a trivial solution  $(\lambda_0, 0)$ we can find a branch of non trivial solutions of  the equation  $
 F(\lambda,x)=0
 $ is the main concern of  the bifurcation  theory. If this happens we say that we have a bifurcation at the point $(\lambda_0, 0)$. We shall restrict ourselves here to  the classical result of Crandall and Rabinowitz  \cite{CR}. For more general results  we refer the reader to  the book of Kielh\"{o}fer \cite{Kil}. 
 
\begin{theorem}\label{C-R} Let $X, Y$ be two Banach spaces, $V$ a neighborhood of $0$ in $X$ and let 
$
F : \RR \times V \to Y
$
with the following  properties:
\begin{enumerate}
\item $F (\lambda, 0) = 0$ for any $\lambda\in \RR$.
\item The partial derivatives $F_\lambda$, $F_x$ and $F_{\lambda x}$ exist and are continuous.
\item $N(\mathcal{L}_0)$ and $Y/R(\mathcal{L}_0)$ are one-dimensional. 
\item {\it Transversality assumption}: $\partial_\lambda\partial_xF(0, 0)x_0 \not\in R(\mathcal{L}_0)$, where
$$
N(\mathcal{L}_0) = span\{x_0\}, \quad \mathcal{L}_0\triangleq \partial_x F(0,0).
$$
\end{enumerate}
If $Z$ is any complement of $N(\mathcal{L}_0)$ in $X$, then there is a neighborhood $U$ of $(0,0)$ in $\RR \times X$, an interval $(-a,a)$, and continuous functions $\varphi: (-a,a) \to \RR$, $\psi: (-a,a) \to Z$ such that $\varphi(0) = 0$, $\psi(0) = 0$ and
$$
F^{-1}(0)\cap U=\Big\{\big(\varphi(\xi), \xi x_0+\xi\psi(\xi)\big)\,;\,\vert \xi\vert<a\Big\}\cup\Big\{(\lambda,0)\,;\, (\lambda,0)\in U\Big\}.
$$
\end{theorem}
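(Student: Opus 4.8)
\emph{Proof proposal.} The statement is the classical Crandall--Rabinowitz bifurcation theorem, and the plan is to carry out a Lyapunov--Schmidt reduction that collapses the abstract equation $F(\lambda,x)=0$ to a single scalar equation solvable by the implicit function theorem. First I would record the splittings forced by hypothesis (3): since $N(\mathcal{L}_0)=\mathrm{span}\{x_0\}$ is one dimensional, write $X=N(\mathcal{L}_0)\oplus Z$ with $Z$ the given complement; and since $Y/R(\mathcal{L}_0)$ is one dimensional, $R(\mathcal{L}_0)$ is a closed subspace of codimension one, so $Y=R(\mathcal{L}_0)\oplus W$ with $\dim W=1$, and we let $Q:Y\to R(\mathcal{L}_0)$ be the associated bounded projection. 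The structural fact underpinning everything is that the restriction $\mathcal{L}_0|_{Z}:Z\to R(\mathcal{L}_0)$ is a Banach space isomorphism: it is injective because $Z\cap N(\mathcal{L}_0)=\{0\}$ and onto by the definition of the range, hence boundedly invertible by the open mapping theorem.

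Writing $x=sx_0+z$ with $s\in\RR$, $z\in Z$, the equation $F(\lambda,x)=0$ is equivalent to the pair consisting of the \emph{auxiliary equation} $QF(\lambda,sx_0+z)=0$ and the \emph{bifurcation equation} $(I-Q)F(\lambda,sx_0+z)=0$. I would dispatch the auxiliary equation first. Its derivative in $z$ at the origin is $Q\mathcal{L}_0|_{Z}=\mathcal{L}_0|_{Z}$, the isomorphism above, so the implicit function theorem produces a continuous map $z=z(\lambda,s)$ near $(0,0)$ with $z(0,0)=0$ solving $QF(\lambda,sx_0+z(\lambda,s))=0$. Because $F(\lambda,0)=0$ shows that $z=0$ already solves it when $s=0$, the uniqueness part of the implicit function theorem forces $z(\lambda,0)=0$ for all $\lambda$, which is what keeps the trivial branch visible.

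Substituting back yields the reduced scalar map $\Phi(\lambda,s)=(I-Q)F(\lambda,sx_0+z(\lambda,s))$, valued in the one dimensional space $W$. Since $\Phi(\lambda,0)=0$ I would factor out the trivial branch by setting $\Psi(\lambda,s)=\Phi(\lambda,s)/s$ for $s\neq 0$ and $\Psi(\lambda,0)=\partial_s\Phi(\lambda,0)$, so that non trivial V-states correspond to zeros of $\Psi$. Two derivative computations then finish the job. Differentiating the auxiliary equation in $s$ at the origin and using $\mathcal{L}_0x_0=0$ gives $\mathcal{L}_0\partial_s z(0,0)=0$, whence $\partial_s z(0,0)=0$ by injectivity on $Z$; together with $\mathcal{L}_0$ mapping into $R(\mathcal{L}_0)$ this yields $\Psi(0,0)=0$. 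Differentiating once more in $\lambda$, and using $z(\lambda,0)\equiv 0$ (so $\partial_\lambda z(0,0)=0$) and $\partial_s z(0,0)=0$, every lower order contribution is annihilated by $I-Q$ and one is left with $\partial_\lambda\Psi(0,0)=(I-Q)F_{\lambda x}(0,0)x_0$, which is nonzero precisely by the transversality hypothesis (4). A final application of the implicit function theorem to $\Psi(\lambda,s)=0$ then gives $\lambda=\varphi(s)$ with $\varphi(0)=0$; setting $\psi(s)=z(\varphi(s),s)/s$ (with $\psi(0)=0$ following from $z(\cdot,0)\equiv0$ and $\partial_s z(0,0)=0$) reconstructs the announced non trivial branch, while the uniqueness in both implicit function theorem applications delivers the stated local description of $F^{-1}(0)\cap U$.

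The genuine obstacle is not the algebra but the low regularity: with only $F_\lambda$, $F_x$ and $F_{\lambda x}$ assumed continuous, I cannot simply quote smoothness of $\Phi$ to justify the division by $s$ and the differentiation of $\Psi$. The remedy is a Hadamard type representation $\Phi(\lambda,s)=s\int_0^1\partial_s\Phi(\lambda,ts)\,dt$, which both defines $\Psi$ and exhibits it, along with $\partial_\lambda\Psi$, as integrals of continuous data, so that the concluding implicit function theorem is legitimately applicable. Checking that this minimal differentiability indeed propagates through the reduction --- in particular that $z(\lambda,s)$ carries the mixed continuity needed to read off $\partial_\lambda\Psi(0,0)$ --- is the delicate bookkeeping, and it is exactly the point where hypothesis (2) is indispensable.
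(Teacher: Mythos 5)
The paper does not actually prove Theorem \ref{C-R}: it is stated as the classical Crandall--Rabinowitz theorem and the proof is delegated to \cite{CR} (see also \cite{Kil}), so there is no in-paper argument to compare yours against. On its own merits, your Lyapunov--Schmidt proof is the standard one and is essentially correct: the splittings $X=N(\mathcal{L}_0)\oplus Z$ and $Y=R(\mathcal{L}_0)\oplus W$, the solution of the auxiliary equation by the implicit function theorem using that $\mathcal{L}_0|_{Z}:Z\to R(\mathcal{L}_0)$ is an isomorphism, the identity $z(\lambda,0)\equiv 0$ from uniqueness, the quotient $\Psi=\Phi/s$ defined through the integral representation, and the computation $\partial_\lambda\Psi(0,0)=(I-Q)F_{\lambda x}(0,0)x_0\neq 0$ are exactly the needed ingredients, and the stated description of $F^{-1}(0)\cap U$ does follow from uniqueness in the two implicit-function-theorem steps. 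Two remarks. First, closedness of $R(\mathcal{L}_0)$ is not a formal consequence of its having algebraic codimension one (a hyperplane in a Banach space need not be closed); it does hold here because $R(\mathcal{L}_0)$ is the range of a bounded operator with finite algebraic codimension --- consider the bounded bijection $X/N(\mathcal{L}_0)\times W\to Y$, $(\dot x,w)\mapsto \mathcal{L}_0x+w$, and invoke the open mapping theorem --- but this deserves a line rather than an assertion. Second, the original argument of \cite{CR} is a one-shot variant of yours: one applies the implicit function theorem directly to $g(\xi,\lambda,z)\triangleq \xi^{-1}F\big(\lambda,\xi(x_0+z)\big)$ for $z\in Z$, extended at $\xi=0$ by $F_x(\lambda,0)(x_0+z)$, and solves for $(\lambda,z)\in\RR\times Z$ as functions of $\xi$; hypotheses (3) and (4) are precisely what make the linearization $(\tau,v)\mapsto \tau F_{\lambda x}(0,0)x_0+\mathcal{L}_0v$ an isomorphism of $\RR\times Z$ onto $Y$. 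That route merges your auxiliary and bifurcation equations into a single implicit-function-theorem application and confines the low-regularity bookkeeping to the single map $g$, whereas your two-step reduction must additionally track that the mixed differentiability of hypothesis (2) survives the first reduction --- which, as you correctly flag, is the only delicate point and is handled by the same integral-remainder device.
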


\subsection{Special functions} We shall  give a short  introduction on  the Gauss hypergeometric functions and discuss some  of their basic properties. The formulae listed below will be crucial in the computations of the linearized operator associated to the V-states equations.  Recall that for any real numbers $a,b\in \mathbb{R},\, c\in \mathbb{R}\backslash(-\mathbb{N})$ the hypergeometric function $z\mapsto F(a,b;c;z)$ is defined on the open unit disc $\mathbb{D}$ by the power series
$$
F(a,b;c;z)=\sum_{n=0}^{\infty}\frac{(a)_n(b)_n}{(c)_n}\frac{z^n}{n!}, \quad \forall z\in \mathbb{D}.
$$
Here, $(x)_n$ is the  Pochhammer symbol defined by,
$$
(x)_n = \begin{cases}   1   & n = 0 \\
  x(x+1) \cdots (x+n-1) & n \geq1.
 \end{cases}
 $$
 It is obvious that
\begin{equation}\label{f1s}
(x)_n=x\,(1+x)_{n-1},\quad (x)_{n+1}=(x+n)\,(x)_n.
\end{equation}
 For a future use we recall an integral representation of the  hypergeometric function, for instance see \cite[p. 47]{Rain}. Assume that  $ \textnormal{Re}(c) > \textnormal{Re}(b) > 0,$ then 
 \begin{equation}\label{integ}
 F(a,b;c;z)=\frac{\Gamma(c)}{\Gamma(b)\Gamma(c-b)}\int_0^1 x^{b-1} (1-x)^{c-b-1}(1-zx)^{-a}~\mathrm dx,\quad |z|<1.
 \end{equation}
 The function $\Gamma: \CC\backslash\{-\NN\} \to \CC$ refers to the gamma function which is the analytic continuation to the negative half plane of the usual gamma function defined on the positive \mbox{half-plane $\{\Rea\, z > 0\}$} by the integral representation:
 $$
 \Gamma(z)=\int_0^{+\infty}t^{z-1}e^{-t}\mathrm dt.
 $$
 It satisfies the relation
\begin{equation}\label{Gamma1}
\Gamma(z+1)=z\,\Gamma(z), \quad \forall z\in \CC \backslash(-\NN).
\end{equation}
From this we deduce the identities
\begin{equation}\label{Poc}
(x)_n=\frac{\Gamma(x+n)}{\Gamma(x)},\quad (x)_n=(-1)^n\frac{\Gamma(1-x)}{\Gamma(1-x-n)},
\end{equation}
provided all the quantities in the right terms are well-defined. Later we need the following values,
 \begin{equation}\label{for1}
 \Gamma(n+1)=n!,\quad \Gamma(1/2)=\sqrt{\pi}.
 \end{equation}
 Another useful identity is the Euler's reflection formula,
\begin{equation}\label{reflection}
\Gamma(1-z) \Gamma(z) = {\pi \over \sin{(\pi z)}},\quad \forall z\notin \mathbb{Z}.
\end{equation}
  Now we shall introduce  the digamma function which is nothing but the logarithmic derivative of the gamma  function and often  denoted by $\digamma$. It is given by
 $$
 \digamma(z)=\frac{\Gamma^\prime(z)}{\Gamma(z)},\quad z\in \CC \backslash(-\NN).
 $$
 The following identity is classical,
\begin{equation}\label{digam}
\forall n\in \NN,\quad \digamma(n+\frac12)=-\gamma-2\ln2+2\sum_{k=0}^{n-1}\frac{1}{2k+1}\cdot
\end{equation}

  When $\Rea (c-a-b)>0 $ then it can be shown that  the  hypergeometric series is absolutely convergent on the closed unit disc  and one has the 
  expression,\begin{equation}\label{id1}
F (a,b;c;1)= \frac{\Gamma(c)\Gamma(c-a-b)}{\Gamma(c-a)\Gamma(c-b)}\cdot 
\end{equation}
 the  proof can be found in \cite[p. 49]{Rain},

 Now recall the 
Kummer's quadratic transformation
\begin{equation}\label{FG}
F\Big(a,b;2b;\frac{4z}{(1+z)^2}\Big)=(1+z)^{2a}F\Big(a,a+\frac12-b;b+\frac12;z^2\Big), \quad\forall\,  z\in [0,1[.
\end{equation}
Next we recall some  contiguous function relations of the  hypergeometric series,  see \cite{Rain}.
\begin{equation}\label{f0}
c(c+1)\,F (a,b;c;z)-c(c+1)\,F (a,b;c+1;z)-ab\,zF (a+1,b+1;c+2;z)=0,
\end{equation}
\begin{equation}\label{f1ch2}
c\,F (a,b;c;z)-c\,F (a+1,b;c;z)+b\,zF (a+1,b+1;c+1;z)=0,
\end{equation}
\begin{equation}\label{f2}
c\,F (a,b;c;z)-c\,F (a,b+1;c;z)+a\,zF (a+1,b+1;c+1;z)=0,
\end{equation}
\begin{equation}\label{f3}
c\,F (a,b;c;z)-(c-b)\,F (a,b;c+1;z)-b\,F (a,b+1;c+1;z)=0,
\end{equation}
\begin{equation}\label{f4}
c\,F (a,b;c;z)-(c-a)\,F (a,b;c+1;z)-a\,F (a+1,b;c+1;z)=0,
\end{equation}
\begin{equation}\label{f34}
b\,F (a,b+1;c;z)-a\,F (a+1,b;c;z)+(a-b)\,F (a,b;c;z)=0,
\end{equation}
\begin{equation}\label{f5}
(b-a)(1-z)\,F (a,b;c;z)-(c-a)\,F (a-1,b;c;z)+(c-b)\,F (a,b-1;c;z)=0.
\end{equation}
Now we close this discussion with recalling Bessel function $J_n$ of the first kind of index $n\in \NN$ and review some important identities. It is defined in the full space $\mathbb{C}$  by the power series
$$
J_n(z)=\sum_{k\geq0}\frac{(-1)^k}{k!\,(n+k)!}\Big(\frac{z}{2}\Big)^{2k+n}.
$$
The following identity called Sonine-Schafheitlin's formula will be very useful later.
\begin{eqnarray}\label{sonine}
\nonumber\int_0^{+\infty}\frac{J_\mu(at)J_\nu(bt)}{t^\lambda}\hbox{d}t &=&\frac{ a^{\lambda-\nu-1}b^\nu\,\Gamma\Big(\frac12\mu+\frac12\nu-\frac12\lambda+\frac12\Big)}{2^\lambda\Gamma(\nu+1)\Gamma\Big(\frac12\lambda+\frac12\mu-\frac12\nu+\frac12\Big)}\\ &\times &F\Big(\frac{\mu+\nu-\lambda+1}{2},\frac{\nu-\lambda-\mu+1}{2};\nu+1;\frac{b^2}{a^2}\Big),
\end{eqnarray}
 provided that  $0<b<a$ and that the integral is convergent. A detailed proof of this result  can be found in \cite[p. 401]{Watson}.
 
\subsection{Basic integrals}
The main goal of this paragraph is to compute explicitly some integrals that will appear later  in the spectral study.
\begin{lemma}\label{lem} Let $\alpha,\, b\in (0,1)$ and $n\in \NN$. 
 Then for any $w\in \mathbb{T}$ we have the following formulae:
\begin{equation}\label{In}
\fint_{\mathbb{T}}\frac{\tau^{n-1}}{|w-b\tau|^\alpha}d\tau\,=\, w^{n}b^n\frac{(\frac\alpha2)_n}{n!}F\Big(\frac\alpha2,n+\frac\alpha2;n+1;b^2\Big).\qquad\;\;
\end{equation}
\begin{align}\label{Jn}
\fint_\mathbb{T}\frac{(w-b\tau)(a w^{n}-c\tau^{n})}{\vert w-b\tau\vert^{\alpha+2}}d\tau =  w^{n+2}b\bigg[&a\Big(1+\frac\alpha2\Big)F\Big(\frac\alpha2,2+\frac\alpha2;2;b^2\Big)\\ &- c\,b^{n}\frac{(1+\frac\alpha2)_{n+1}}{(n+1)!}F\Big(\frac\alpha2,n+2+\frac\alpha2;n+2;b^2\Big)\bigg]\notag.
\end{align}
\begin{align}\label{Kn}
\fint_\mathbb{T}\frac{(\overline{w}-b\overline{\tau})(a\overline{w}^n-c\overline{\tau}^{n})}{\vert w-b\tau\vert^{\alpha+2}}d\tau=\overline{w}^{n}\bigg[&ab\frac\alpha2\,F(\frac\alpha2+1,\frac\alpha2+1;2;b^2)\\ &- c\,b^{n-1}\frac{(1+\frac\alpha2)_{n-1}}{(n-1)!}F\Big(\frac\alpha2,n+\frac\alpha2;n;b^2\Big)\bigg]\notag.\qquad\qquad
\end{align}
\begin{align}\label{Ln}
\fint_\mathbb{T}\frac{(bw-\tau)(a w^{n}-c\tau^{n})}{\vert bw-\tau\vert^{\alpha+2}}d\tau =  -w^{n+2}b^2\bigg[&a\frac\alpha4\Big(\frac\alpha2+1\Big)F\Big(1+\frac\alpha2,2+\frac\alpha2;3;b^2\Big)\\&- c\,b^{n}\frac{(\frac\alpha2)_{n+2}}{(n+2)!}F\Big(1+\frac\alpha2,n+2+\frac\alpha2;n+3;b^2\Big)\bigg].\notag
\end{align}
\begin{align}\label{Mn}
\fint_\mathbb{T}\frac{(b\overline{w}-\overline{\tau})(a\overline{w}^n-c\overline{\tau}^{n})}{\vert bw-\tau\vert^{\alpha+2}}d\tau =-\overline{w}^{n}\bigg[&aF\Big(\frac\alpha2,\frac\alpha2+1;1;b^2\Big)\\ &- c\,b^{n}\frac{(\frac\alpha2)_{n}}{n!}F\Big(\frac\alpha2+1,n+\frac\alpha2;n+1;b^2\Big)\bigg].\notag\qquad\quad
\end{align}

\end{lemma}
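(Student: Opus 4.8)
The plan is to prove all five identities by one and the same mechanism: reduce each average over $\mathbb{T}$ to a residue computation by expanding the kernel into an absolutely convergent double power series in an auxiliary variable, and then collapse that series into a Gauss hypergeometric function with the help of the Pochhammer identities \eqref{f1s} and \eqref{Poc}. First I would perform the change of variable $\tau=w\zeta$, so that $\zeta$ again runs over $\mathbb{T}$ and $d\tau=w\,d\zeta$. Since $|w|=1$, every factor of $w$ and $\overline w$ can be pulled outside the integral, and a short computation shows that they combine into exactly the prefactors $w^{n}$, $w^{n+2}$, $\overline{w}^{\,n}$ (with the announced signs) displayed in \eqref{In}--\eqref{Mn}; what remains is a $w$-independent integral in $\zeta$.

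The key algebraic observation is that for $\zeta\in\mathbb{T}$ one has $|1-b\zeta|^{2}=(1-b\zeta)(1-b/\zeta)$ and, crucially, $|bw-\tau|=|1-b\zeta|=|w-b\tau|$, so in every case the denominator is a power of $(1-b\zeta)(1-b/\zeta)$. Because $0<b<1$, both $1-b\zeta$ and $1-b/\zeta=\overline{1-b\zeta}$ have real part at least $1-b>0$; they stay in the right half-plane, the principal branches of their fractional powers are unambiguous, and they satisfy $(1-b\zeta)^{-s}(1-b/\zeta)^{-s}=|1-b\zeta|^{-2s}$. I would then rewrite each integrand, after absorbing the surviving numerator factor, into the shape $\pm\,(\text{monomial in }\zeta)\,(1-b\zeta)^{-s_1}(1-b/\zeta)^{-s_2}$ with $s_1,s_2\in\{\tfrac\alpha2,1+\tfrac\alpha2\}$. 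For the last two identities the simplifications $bw-\tau=-w\zeta(1-b/\zeta)$ and $b\overline w-\overline\tau=-\overline w\,\overline\zeta(1-b\zeta)$ allow one power to be cancelled from the denominator with no branch issue whatsoever.

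Next I would expand the two factors by the generalized binomial series $(1-b\zeta)^{-s}=\sum_{k\ge0}\tfrac{(s)_k}{k!}b^{k}\zeta^{k}$ and $(1-b/\zeta)^{-s}=\sum_{j\ge0}\tfrac{(s)_j}{j!}b^{j}\zeta^{-j}$, both absolutely convergent on $|\zeta|=1$ since $b<1$, which legitimates multiplying the series and integrating term by term. Using $\fint_{\mathbb{T}}\zeta^{m}\,d\zeta=\delta_{m,-1}$, the double sum collapses to the single sum obtained by retaining the coefficient of $\zeta^{-1}$, whose position is shifted according to the monomial carried by the numerator. In each resulting sum $\sum_k$ I would factor out the $k=0$ term and apply the splitting $(x)_{n+k}=(x)_n(x+n)_k$ together with the elementary cases $(n+k)!=n!\,(n+1)_k$, $(1)_k=k!$ and $(2)_k=(k+1)!$ (from \eqref{f1s}, \eqref{Poc}, \eqref{for1}); this identifies the sum as a multiple of a ${}_2F_1$ with precisely the stated parameters.

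Formula \eqref{In} is the clean template: the numerator is the single monomial $\tau^{n-1}$ and the denominator carries only the power $\alpha$, so one reads off the coefficient of $\zeta^{-1}$ in $\zeta^{n-1}(1-b\zeta)^{-\alpha/2}(1-b/\zeta)^{-\alpha/2}$ directly, and the Pochhammer bookkeeping gives $b^{n}\tfrac{(\alpha/2)_n}{n!}F(\tfrac\alpha2,n+\tfrac\alpha2;n+1;b^2)$. The remaining identities \eqref{Jn}--\eqref{Mn} are the very same computation, with the extra numerator monomial merely relocating which coefficient one extracts and splitting the output into an $a$-part and a $c$-part. I expect no conceptual obstacle here; the only points that genuinely demand care are the branch-and-convergence justification above---which is exactly why one must phrase everything through the two half-plane factors $1\mp b\zeta^{\pm1}$---and the purely mechanical Pochhammer matching, which has to be run separately, but identically, for the $a$-term and the $c$-term of each of the four remaining formulas.
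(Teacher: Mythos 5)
Your proposal is correct, and it takes a genuinely different route from the paper. The paper proves \eqref{In} by passing to the angular variable $\zeta=e^{i\eta}$, writing $|1-b\zeta|^{-\alpha}$ as a power series in $\frac{4b}{(1+b)^2}\cos^2\eta$, evaluating the resulting Wallis-type integrals $\int_0^\pi\cos^{2m}\eta\,e^{2in\eta}\,d\eta$ via a Gamma-function formula, invoking Legendre's duplication formula, and finally applying Kummer's quadratic transformation \eqref{FG} to convert the argument $\frac{4b}{(1+b)^2}$ into $b^2$; the remaining four identities are then obtained by reducing each integral to linear combinations of instances of \eqref{In} with exponent $\alpha+2$ and compressing the resulting pairs of hypergeometric functions via the contiguous relations \eqref{f1ch2} and \eqref{f3}. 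You instead exploit the factorization $|1-b\zeta|^{2}=(1-b\zeta)(1-b\bar\zeta)$ on the circle, expand both half-plane factors as binomial series, and read off the coefficient of $\zeta^{-1}$; the Pochhammer splittings $(x)_{n+k}=(x)_n(x+n)_k$ then produce the ${}_2F_1$ directly in the variable $b^2$, with no quadratic transformation and no contiguous relations. I checked the mechanism on \eqref{In}, \eqref{Jn} and \eqref{Kn} and it reproduces the stated coefficients exactly (including the shift $j=k-n+1$ in the $c$-part of \eqref{Kn}, which correctly starts the sum at $k=n-1$ and yields the factor $b^{n-1}(1+\frac\alpha2)_{n-1}/(n-1)!$). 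Your branch discussion is also sound, since $1-b\zeta$ stays in the right half-plane. What your route buys is uniformity and economy: one mechanism handles all five formulas and needs only \eqref{f1s} and \eqref{Poc}. What the paper's route buys is that \eqref{Jn}--\eqref{Mn} become short corollaries of \eqref{In}, and it exercises the contiguous-relation machinery \eqref{f0}--\eqref{f5} that is in any case indispensable later in the computation of the linearized operator. Either proof is acceptable.
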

\begin{proof}
To prove the first identity we use successively   the change of variables $\tau=w \zeta$ \mbox{and $\zeta=e^{i\eta}$,}
\begin{eqnarray*}
\fint_{\mathbb{T}}\frac{\tau^{n-1}}{|w-b\tau|^\alpha}d\tau&=& w^{n}\fint_{\mathbb{T}}\frac{\zeta^{n-1}}{|1-b\zeta|^\alpha}d\zeta\\
&=&w^{n}\frac{1}{2(1+b)^\alpha\pi}\int_{0}^{2\pi}\frac{e^{in\eta}}{\big(1-\frac{4b}{(1+b)^2}\cos^2(\eta/2)\big)^{\frac{\alpha}{2}}}d\eta.
\end{eqnarray*}
Again by the change of variables $\eta/2\mapsto\eta$   one gets 
\begin{eqnarray*}
\fint_{\mathbb{T}}\frac{\tau^{n-1}}{|w-b\tau|^\alpha}d\tau
&=&w^{n}\frac{1}{(1+b)^{\alpha}\pi}\int_{0}^{\pi}\frac{e^{i2n\eta}}{\big(1-\frac{4b}{(1+b)^2}\cos^2\eta\big)^{\frac{\alpha}{2}}}d\eta.
\end{eqnarray*}
Since $\big|\frac{4b}{(1+b)^2}\cos^2\eta\big|<1$ then  we can use  the   Taylor series 
\begin{eqnarray*}
\Big(1-\frac{4b}{(1+b)^2}\cos^2\eta\Big)^{-\alpha/2} &=& \sum_{m=0}^{\infty}\frac{\big(\alpha/2\big)_m}{m!}\frac{2^{2m}b^m}{(1+b)^{2m}} \cos^{2m}\eta.
\end{eqnarray*}

 Consequently, we get
\begin{eqnarray*}
\fint_{\mathbb{T}}\frac{\tau^{n-1}}{|w-b\tau|^\alpha}d\tau
&=&w^{n}\frac{1}{\pi(1+b)^{\alpha}}\sum_{m=0}^{\infty}\frac{\big(\alpha/2\big)_m}{m!}\frac{2^{2m}b^m}{(1+b)^{2m}}\int_{0}^{\pi}\cos^{2m}\eta e^{i2n\eta} d\eta.
\end{eqnarray*}
We shall now recall the following identity, see for instance \cite[p. 8]{M-O} and \cite[p. 449]{Watson},
 \begin{equation*}\label{lem00}
\int_{0}^{\pi}\cos^{x}(\eta) e^{iy\eta}d\eta=\frac{\pi                                                                                                                                                                                                                                                                                                                                                                                                                                                                                                                                                                                                                                                                                                                                                                                                                                                                                                                                                                                                                                                                                                                                                                                                                                                                                                                                                                                                                                                                                                                                                                                                                                                                                                                                                                                                                                                                                                                                                                                                                                                                                                                                                                                                                                                                                                                                                                                                                        \Gamma(x+1)}{2^{x}\Gamma\Big(1+\dfrac{x+y}{2}\Big)\Gamma\Big(1+\dfrac{x-y}{2}\Big)},\quad \forall\, x>-1,\quad\forall\,  y\in \RR.
\end{equation*}
As it  was pointed before the gamma function  has no real zeros  but simple poles located at $-\NN$ and therefore   the function $\frac{1}{\Gamma}$ admits an analytic continuation on $\CC.$ Apply this formula  with $x=2m$ and $y=2n$ yields,
\begin{equation*}\label{imp}
\frac{1}{\pi}\int_{0}^{\pi}\cos^{2m}(\eta) e^{2in\eta}d\eta=\frac{\Gamma(2m+1)}{2^{2m}\Gamma(m+n+1)\Gamma(m-n+1)}\cdot
\end{equation*}

Hence, 
\begin{eqnarray*}
\fint_{\mathbb{T}}\frac{\tau^{n-1}}{|w-b\tau|^\alpha}d\tau
&=&w^{n}\frac{1}{(1+b)^{\alpha}}\sum_{m=n}^{\infty}\frac{\big(\alpha/2\big)_m}{m!}\frac{\Gamma(2m+1)}{\Gamma(m+n+1)\Gamma(m-n+1)}\frac{b^m}{(1+b)^{2m}}\\
 &=&w^{n}\frac{1}{(1+b)^{\alpha}}\sum_{m=0}^{\infty}\frac{\big(\alpha/2\big)_{m+n}}{(m+n)!}\frac{\Gamma(2m+2n+1)}{\Gamma(m+2n+1)\Gamma(m+1)}\frac{b^{m+n}}{(1+b)^{2(m+n)}}\cdot
\end{eqnarray*}
We shall use Legendre's  duplication formula,
$$
\Gamma(z) \; \Gamma\left(z + \frac{1}{2}\right) = 2^{1-2z} \; \sqrt{\pi} \; \Gamma(2z),\quad z\in \mathbb{C}\backslash \{-\NN\},
$$
which gives
\begin{eqnarray*}
\frac{\Gamma(2m+2n+1)}{(m+n)!}&=&\frac{\Gamma(2m+2n+1)}{\Gamma(m+n+1)}\\&=&\frac{2^{2m+2n}}{\sqrt{\pi}}\Gamma(m+n+1/2).
\end{eqnarray*}
Therefore using the identity \eqref{Poc} and  $\Gamma(1/2)=\sqrt{\pi}$ we find
\begin{eqnarray*}
\frac{\Gamma(2m+2n+1)}{(m+n)!}&=&2^{2m+2n}(1/2)_{m+n}.
\end{eqnarray*}
From the elementary fact  $$ (x)_{m+n}=(x)_n(n+x)_m\quad \forall x\in \RR$$ 
one deuces
\begin{eqnarray*}
\fint_{\mathbb{T}}\frac{\tau^{n-1}}{|w-b\tau|^\alpha}d\tau
 &=&w^{n}\frac{(\frac\alpha2)_n(\frac12)_{n}}{(1+b)^{\alpha}(2n)!}\Big(\frac{4b}{(1+b)^2}\Big)^{n}\sum_{m=0}^{\infty}\frac{(n+\frac\alpha2)_m(n+1/2)_{m}}{(2n+1)_mm!}\frac{2^{2m}b^m}{(1+b)^{2m}}\cdot
\end{eqnarray*}
By definition of the  hypergeometric series  we conclude that
\begin{eqnarray*}
\fint_{\mathbb{T}}\frac{\tau^{n-1}}{|w-b\tau|^\alpha}d\tau
 &=&w^{n}\frac{(\frac\alpha2)_n}{n!}\frac{2^{2n}b^n}{(1+b)^{2n+\alpha}}F\Big(n+\frac\alpha2,n+\frac12;2n+1;\frac{4b}{(1+b)^2}\Big)\cdot
 \end{eqnarray*}
Using Kummer's quadratic transformation \eqref{FG} the last identity becomes
\begin{eqnarray*}
\fint_{\mathbb{T}}\frac{\tau^{n-1}}{|w-b\tau|^\alpha}d\tau
 &=&w^{n}b^n\frac{(\frac\alpha2)_n}{n!}F\Big(\frac\alpha2,n+\frac\alpha2;n+1;b^2\Big).
\end{eqnarray*}

This completes the proof of \eqref{In}.

We intend now to compute the second  integral \eqref{Jn}. To this end we use the  change of variables as before,
\begin{eqnarray}\label{jnn1}
I_n &\triangleq &  \fint_\mathbb{T}\frac{(w-b\zeta)(a w^n-c\zeta^{n})}{\vert w-b\zeta\vert^{\alpha+2}}d\zeta\notag \\ &=&w^{n+2}\fint_\mathbb{T}\frac{(1-b\zeta)(a-c\zeta^{n})}{\vert 1-b\zeta\vert^{\alpha+2}}d\zeta\notag\\ &=& w^{n+2}\Big(a\,A_0-c\,A_n\Big)\end{eqnarray}
where
\begin{eqnarray*}
A_n &\triangleq&\fint_\mathbb{T}\frac{\zeta^n d\zeta}{\vert 1-b\zeta\vert^{\alpha+2}}-b\fint_\mathbb{T}\frac{\zeta^{n+1} d\zeta}{\vert 1-b\zeta\vert^{\alpha+2}}.
\end{eqnarray*}
It follows from the identity \eqref{In} that
\begin{align*}
A_n = b^{n+1}\frac{(1+\frac\alpha2)_{n+1}}{(n+1)!}\bigg[ & F\Big(\frac\alpha2+1,n+2+\frac\alpha2;n+2;b^2\Big)\\ &-\frac{(\frac\alpha2+n+2)}{n+2}b^2F\Big(\frac\alpha2+1,n+3+\frac\alpha2;n+3;b^2\Big)\,\,\bigg].
\end{align*}
Then, in view of the formula \eqref{f1ch2} one gets
\begin{eqnarray*}
A_n &=& b^{n+1}\frac{(1+\frac\alpha2)_{n+1}}{(n+1)!}F\Big(\frac\alpha2,n+2+\frac\alpha2;n+2;b^2\Big).
\end{eqnarray*}
Replacing $A_n$ by its expression in \eqref{jnn1} we conclude that
\begin{eqnarray*}
I_n = w^{n+2}\Bigg[ab\big(1+\frac\alpha2\big)F\Big(\frac\alpha2,2+\frac\alpha2;2;b^2\Big)-cb^{n+1}\frac{(1+\frac\alpha2)_{n+1}}{(n+1)!}F\Big(\frac\alpha2,n+2+\frac\alpha2;n+2;b^2\Big)\Bigg].\end{eqnarray*}

We shall now compute  the   integral \eqref{Kn}. We write
\begin{eqnarray*}
K_n &\triangleq & \fint_\mathbb{T}\frac{(\overline{w}-b\overline{\tau})(a\overline{w}^n-c\overline{\tau}^{n})}{\vert 1-b\tau\vert^{\alpha+2}}d\tau\\ &=&\overline{w}^{n} \fint_\mathbb{T}\frac{(1-b\overline{\zeta})(a-c\overline{\zeta}^{n})}{\vert 1-b\zeta\vert^{\alpha+2}}d\zeta\\ &\triangleq& \overline{w}^{n}\Big(aB_0-cB_n\Big) .
\end{eqnarray*}
Using the identity \eqref{In}, $B_0$ can be rewritten as
\begin{eqnarray*}
B_0 &\triangleq&\fint_\mathbb{T}\frac{ d\zeta}{\vert 1-b\zeta\vert^{\alpha+2}}-b\fint_\mathbb{T}\frac{\overline{\zeta} d\zeta}{\vert 1-b\zeta\vert^{\alpha+2}}\\ 
&=&\fint_\mathbb{T}\frac{ d\zeta}{\vert 1-b\zeta\vert^{\alpha+2}}-b\fint_\mathbb{T}\frac{{\zeta^{-1}} d\zeta}{\vert 1-b\zeta\vert^{\alpha+2}}
\\&=& b(1+\frac\alpha2)F\Big(\frac\alpha2+1,2+\frac\alpha2;2;b^2\Big)-bF\Big(\frac\alpha2+1,\frac\alpha2+1;1;b^2\Big).
\end{eqnarray*}
Then, in view of the formula \eqref{f3} we get
\begin{eqnarray*}
B_0 &=& \frac\alpha2bF\Big(\frac\alpha2+1,\frac\alpha2+1;2;b^2\Big).
\end{eqnarray*}
To compute $B_n$ we first observe that by the change of variables $\overline\zeta\mapsto \zeta$ we find
\begin{eqnarray*}
 \fint_\mathbb{T}\frac{\overline{\zeta}^n }{\vert 1-b\zeta\vert^{\alpha+2}}d\zeta=\fint_\mathbb{T}\frac{\zeta^{n-2} }{\vert 1-b\zeta\vert^{\alpha+2}}d\zeta
\end{eqnarray*}
which yields in turn
\begin{eqnarray*}
B_n  &=& A_{n-2}\\ &=& b^{n-1}\frac{(1+\frac\alpha2)_{n-1}}{(n-1)!}F\Big(\frac\alpha2,n+\frac\alpha2;n;b^2\Big).
\end{eqnarray*}
Consequently,
\begin{eqnarray*}
K_n &=& \overline{w}^{n}\bigg[a\,b\frac\alpha2F(\frac\alpha2+1,\frac\alpha2+1;2;b^2)-c\,b^{n-1}\frac{(1+\frac\alpha2)_{n-1}}{(n-1)!}F\Big(\frac\alpha2,n+\frac\alpha2;n;b^2\Big)\bigg] .
\end{eqnarray*}
Concerning the  integral \eqref{Ln} we use a change of variable as before in order to get
\begin{eqnarray}\label{jn1}
P_n &\triangleq &  \fint_\mathbb{T}\frac{(bw-\zeta)(a w^n-c\zeta^{n})}{\vert bw-\zeta\vert^{\alpha+2}}d\zeta\notag \\ &=&w^{n+2}\fint_\mathbb{T}\frac{(b-\zeta)(a-c\zeta^{n})}{\vert b-\zeta\vert^{\alpha+2}}d\zeta\notag\\ &=& w^{n+2}\Big(a\,C_0-c\,C_n\Big),
\end{eqnarray}
with
\begin{eqnarray*}
C_n &\triangleq&b\fint_\mathbb{T}\frac{\zeta^n d\zeta}{\vert b-\zeta\vert^{\alpha+2}}-\fint_\mathbb{T}\frac{\zeta^{n+1} d\zeta}{\vert b-\zeta\vert^{\alpha+2}}\cdot
\end{eqnarray*}
Observe that
$$
|b-\zeta|=|1- b\zeta | \quad  \forall \zeta\in\mathbb{T}.
$$
Then, it follows from the formula \eqref{In} that
\begin{align*}
C_n = b^{n+2}\frac{(1+\frac\alpha2)_{n+1}}{(n+2)!}\bigg[ &(n+2)F\Big(\frac\alpha2+1,n+2+\frac\alpha2;n+2;b^2\Big)\\ &-{\Big(\frac\alpha2+n+2\Big)}F\Big(\frac\alpha2+1,n+3+\frac\alpha2;n+3;b^2\Big)\bigg].
\end{align*}
Using once again the identity \eqref{f3} implies
\begin{eqnarray}\label{mm1}
C_n &=& -b^{n+2}\frac{(\frac\alpha2)_{n+2}}{(n+2)!}F\Big(1+\frac\alpha2,n+2+\frac\alpha2;n+3;b^2\Big).
\end{eqnarray}
Plugging the latter expression of  $C_n$ into  \eqref{jn1} yields
\begin{align*}
P_n = -w^{n+2}\bigg[&ab^2\frac\alpha4\Big(1+\frac\alpha2\Big)F\Big(1+\frac\alpha2,2+\frac\alpha2;3;b^2\Big)\\ &- cb^{n+2}\frac{(\frac\alpha2)_{n+2}}{(n+2)!}F\Big(1+\frac\alpha2,n+2+\frac\alpha2;n+3;b^2\Big)\bigg].
\end{align*}
We shall now move to the computation of the last  integral \eqref{Mn},
\begin{eqnarray*}
Q_n &\triangleq &  \fint_\mathbb{T}\frac{(b\overline{w}-\overline{\tau})(a\overline{w}^n-c\overline{\tau}^{n})}{\vert bw-\tau\vert^{\alpha+2}}d\tau\\ &=&\overline{w}^{n} \fint_\mathbb{T}\frac{(b-\overline{\zeta})(a-c\overline{\zeta}^{n})}{\vert 1-b\zeta\vert^{\alpha+2}}d\zeta\\ &\triangleq& \overline{w}^{n}\Big(aD_0-c\, D_n\Big).
\end{eqnarray*}
From the identity \eqref{In} we may write
\begin{eqnarray*}
D_0 &\triangleq&b\fint_\mathbb{T}\frac{ d\zeta}{\vert 1-b\zeta\vert^{\alpha+2}}-\fint_\mathbb{T}\frac{\overline{\zeta} d\zeta}{\vert 1-b\zeta\vert^{\alpha+2}}\\ &=& b^2\Big(1+\frac\alpha2\Big)F\Big(\frac\alpha2+1,2+\frac\alpha2;2;b^2\Big)-F\Big(\frac\alpha2+1,\frac\alpha2+1;1;b^2\Big).
\end{eqnarray*}
Therefore  by the  formula \eqref{f1ch2} we obtain
\begin{eqnarray*}
D_0 &=&-F\Big(\frac\alpha2,\frac\alpha2+1;1;b^2\Big).
\end{eqnarray*}
To compute $D_n$ we write through a change of variables, 
\begin{eqnarray*}
D_n &\triangleq&b\fint_\mathbb{T}\frac{\overline{\zeta}^n d\zeta}{\vert 1-b\zeta\vert^{\alpha+2}}\,-\,\fint_\mathbb{T}\frac{\overline{\zeta}^{n+1} d\zeta}{\vert 1-b\zeta\vert^{\alpha+2}}\\ &=& b\fint_\mathbb{T}\frac{\zeta^{n-2} d\zeta}{\vert 1-b\zeta\vert^{\alpha+2}}-\fint_\mathbb{T}\frac{\zeta^{n-1} d\zeta}{\vert 1-b\zeta\vert^{\alpha+2}}\\ &=& C_{n-2},
\end{eqnarray*}
which implies in view of \eqref{mm1}
\begin{eqnarray*}
D_n = - b^{n}\frac{(\frac\alpha2)_{n}}{n!}F\Big(1+\frac\alpha2,n+\frac\alpha2;n+1;b^2\Big).
\end{eqnarray*}
Hence we find
\begin{eqnarray*}
Q_n &=& \overline{w}^{n}\bigg[-a F\Big(\frac\alpha2,\frac\alpha2+1;1;b^2\Big)+c\, b^{n}\frac{(\frac\alpha2)_{n}}{n!}F\Big(\frac\alpha2+1,n+\frac\alpha2;n+1;b^2\Big)\bigg]
\end{eqnarray*}
and therefore  the proof of the lemma is now complete.
\end{proof}

\section{Regularity of the nonlinear functional}
This section is devoted to the regularity study of the  nonlinear functional $G$ introduced \mbox{in \eqref{Rotaeq1}} and which defines the V-states equations  . We shall  check the regularity assumptions required by Crandall-Rabinowitz's Theorem. The computations are vey heavy  and can  be done in a straightforward way without new difficulties compared to the simply connected case treated in the paper \cite{H-H}. Many of the details may be found in that work and will not be reiterated here. Therefore for the sake of concise presentation we   shall study  the new terms involving the interaction between the boundaries. However, regarding the self-induced terms we   only recall the results from the paper  \cite{H-H}.  To begin with, we  introduce the function spaces that we shall use. 
We set,  
\begin{equation}\label{x}
X=C_{ar}^{2-\alpha}(\mathbb{T})\times C_{ar}^{2-\alpha}(\mathbb{T}),\quad Y=H\times H,
\end{equation}
with
$$
C_{ar}^{2-\alpha}(\mathbb{T})=\bigg\{ f\in C^{2-\alpha}(\mathbb{T}); f(w)=\sum_{n\geq 1} a_n\overline{w}^n, w\in \mathbb{T}, a_n\in \RR, n\in\NN^\star\bigg\}
$$
and
$$
H=\bigg\{ g\in C^{1-\alpha}(\mathbb{T}); g(w)=\frac{i}{2}\sum_{n\geq 1} a_n(w^n-\overline{w}^n), w\in \mathbb{T}, a_n\in \RR, n\in\NN\bigg\}.
$$
For  $b\in(0,1)$, let $V$ denote  the product  $B_r\times B_r$, where $B_r$ is  the open ball of $X$ with center $0$ and radius $\displaystyle{r=(1/4)\min\{b,1-b\}}$. We note that this choice is done in order to guarantee that $\phi_1=\hbox{Id}+f_1$ and $\phi_2=b\,\hbox{Id}+f_2$ are conformal for  $f_1, f_2\in B_r$ and to prevent the intersection between the curves $\phi_1(\mathbb{T})$ and $\phi_2(\mathbb{T})$ which represent the boundaries of the V-states.\\
Now recall from \eqref{Rotaeq1} the form   of  the functional $G=(G_1,G_2)$,
 \begin{equation*}\label{g_2}
G_j\big(\Omega,f_1,f_2\big)\triangleq\textnormal{Im}\bigg\{\Big(\Omega\,\phi_j(w)+ S({\phi_2},\phi_j)(w)-S({\phi_1},\phi_j)(w)\Big)\overline{w}{\overline{\phi_j'(w)}}\bigg\},\quad w\in\mathbb{T},\, j=1,2,
\end{equation*} 
where   $S$ is defined  by
\begin{equation}\label{S}
S({\phi_i},\phi_j)(w)= C_\alpha\fint_\mathbb{T}\frac{\phi_i'(\tau)d\tau}{\vert \phi_j(w)-\phi_i(\tau)\vert^\alpha},\quad  i,j=1,2.
\end{equation}
We shall  rewrite $G_j$  as follows,
\begin{equation*}
G_j\big(\Omega,f_1,f_2\big)=L_j\big(\Omega,f_j\big)+N_{j}\big(f_1,f_2\big),\quad  j=1,2,\quad
\end{equation*} 
with
\begin{equation*}
 L_j\big(\Omega,f_j\big)\triangleq\textnormal{Im}\bigg\{\Big(\Omega\,\phi_j(w)+(-1)^jS(\phi_j,\phi_j)(w)\Big)\overline{w}\,{\overline{\phi_j'(w)}}\bigg\},
\end{equation*} 
and
\begin{equation*}
\; N_{j}\big(f_1,f_2\big)\triangleq (-1)^{j-1}\textnormal{Im}\bigg\{ S({\phi_i},\phi_j)(w)\overline{w}\,{\overline{\phi_j'(w)}}\bigg\}, \qquad i\neq j,
\end{equation*} 
usually with  the notation $\phi_1=\hbox{Id}+f_1$, $\phi_2=b\, \hbox{Id}+f_2$.\\

We propose to  prove  the following result concerning the regularity of $G.$
\begin{proposition}\label{reg}
The following holds true.
\begin{enumerate}
\item $G: \RR\times V\to Y$ is well-defined.
\item $G: \RR\times V\to Y$ is of class $C^1.$
\item The partial derivative  $\partial_\Omega DG: \mathbb{R}\times V\to \mathcal{L}(X, Y)$  exists and is continuous.
 \end{enumerate}
\end{proposition}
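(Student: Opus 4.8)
The plan is to use the decomposition $G_j=L_j(\Omega,f_j)+N_j(f_1,f_2)$ introduced above, in which $L_j$ is the self-induced part that coincides with the functional of the simply connected problem and $N_j$ collects the interaction between the two boundaries. For the self-induced terms $L_j$ I would simply invoke \cite{H-H}, where all three assertions are proved; the whole matter then reduces to the new interaction terms $N_j$. The decisive point is that, \emph{contrary to} $L_j$, the kernels defining $N_j$ through \eqref{S} are \emph{non-singular}: for $i\neq j$ the points $\phi_j(w)$ and $\phi_i(\tau)$ lie on the two disjoint curves $\phi_1(\mathbb{T})$ and $\phi_2(\mathbb{T})$, and the choice $r=\tfrac14\min\{b,1-b\}$ forces $|\phi_1(w)|\geq 1-r$ and $|\phi_2(\tau)|\leq b+r$, so that
$$
|\phi_1(w)-\phi_2(\tau)|\geq (1-r)-(b+r)\geq \frac{1-b}{2}>0,\qquad \forall\, w,\tau\in\mathbb{T},
$$
and symmetrically for the pairing $|\phi_2(w)-\phi_1(\tau)|$. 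Hence $(\zeta_1,\zeta_2)\mapsto|\zeta_1-\zeta_2|^{-\alpha}$ is smooth on the compact region swept out as $(f_1,f_2)$ ranges over $V$, and every estimate below avoids the singular-integral analysis that $L_j$ requires.

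For assertion (1), I would first observe that $S(\phi_i,\phi_j)$ is the integral of this smooth kernel against $\phi_i'$, hence is itself of class $C^{2-\alpha}$ in $w$; multiplying by $\overline{w}\,\overline{\phi_j'(w)}$ and using that $C^{1-\alpha}(\mathbb{T})$ is an algebra together with $\phi_j'\in C^{1-\alpha}(\mathbb{T})$ (because $f_j\in C^{2-\alpha}$) gives $N_j\in C^{1-\alpha}(\mathbb{T})$. It then remains to place $N_j$ in $H$, i.e. to check that it is real, carries no zero frequency, and satisfies $N_j(\overline{w})=-N_j(w)$. Reality is automatic from the imaginary part, oddness follows from the reality of the coefficients of $f_1,f_2$ via the reflection identity \eqref{deriv-bar} (exactly the manipulation used for the annulus in Section \ref{sec12}), and oddness in turn forces the mean to vanish. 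Together with $L_j\in H$ this yields $G:\RR\times V\to Y$.

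For assertion (2) the non-singularity is decisive: since the kernel $|\phi_j(w)-\phi_i(\tau)|^{-\alpha}$ and all of its $w$- and data-derivatives are uniformly bounded on $V$, I would justify differentiation under the integral sign and obtain Gateaux derivatives $\partial_{f_1}N_j,\partial_{f_2}N_j$ whose kernels are again smooth, now carrying the non-singular denominator $|\phi_j(w)-\phi_i(\tau)|^{\alpha+2}$. Each is bounded from $X$ to $H$ by the same algebra estimate, and depends continuously---indeed polynomially---on the base point $(f_1,f_2)$; continuity of the Gateaux derivative upgrades it to a continuous Fréchet derivative, giving $G\in C^1$. Assertion (3) is then immediate, because $\Omega$ enters $G$ only linearly and only through $L_j$, so $\partial_\Omega G_j=\Ima\{\phi_j(w)\,\overline{w}\,\overline{\phi_j'(w)}\}$ is independent of $\Omega$ and of the interaction; thus $\partial_\Omega DG$ is the Fréchet derivative in $(f_1,f_2)$ of this explicit expression, a bounded operator depending continuously on $(f_1,f_2)$ and constant in $\Omega$.

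I expect the main obstacle to be bookkeeping rather than analysis: verifying the uniform lower bound on $|\phi_j(w)-\phi_i(\tau)|$ that legitimizes every interchange of derivative and integral, and then checking that after taking imaginary parts and multiplying by $\overline{w}\,\overline{\phi_j'(w)}$ the interaction terms and all their derivatives land precisely in the odd, zero-mean space $H$. Once the separation estimate is secured, all the regularity claims for $N_j$ are routine, and the genuinely singular part of the problem stays confined to the self-induced terms $L_j$ imported from \cite{H-H}.
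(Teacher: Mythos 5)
Your proposal is correct and follows essentially the same route as the paper: the same splitting $G_j=L_j+N_j$ with $L_j$ imported from \cite{H-H}, the same separation estimate $|\phi_i(w)-\phi_j(\tau)|\geq (1-b)/2$ making the interaction kernels non-singular, the algebra property of $C^{1-\alpha}(\mathbb{T})$, the Gateaux-to-Fr\'echet upgrade via strong continuity of the derivative, and the observation that $\Omega$ enters only linearly for assertion (3).
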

\begin{proof}

Notice that the terms $L_j,j=1,2$ appears modulo the sign of $(-1)^j$ in the simply connected case discussed in the paper \cite{H-H} and all the computations were done there. Therefore we shall restrict ourselves to recalling just  the results of those computations:
\begin{enumerate}
\item $L_j: \RR\times B_r\to H$ is well-defined.
\item $L_j: \RR\times B_r\to H$ is of class $C^1$.
 \end{enumerate}
Moreover the differential $DL_j$  is given  for $f_j\in B_r, h_j\in C_{ar}^{2-\alpha}(\mathbb{T})$ by 

\begin{align}\label{dlj}
\nonumber  DL_j(\Omega,f_j)h_j(w) =&\nonumber   \textnormal{Im} \Bigg\{\Omega\,\Big(\overline{w}\,{\overline{h_j'(w)}}\,\phi_j(w)+\overline{w}\,{\overline{\phi_j'(w)}}\,h_j(w)\Big)\\ \nonumber &+\nonumber (-1)^j  \overline{w}\,{\overline{h_j'(w)}}\,S({\phi_j},\phi_j)(w)+(-1)^jC_\alpha\overline{w}\overline{\phi_j^\prime(w)}\fint_\mathbb{T}\frac{h_j^\prime(\tau)}{\vert \phi_j(w)-\phi_j(\tau)\vert^\alpha}d\tau\\ &+(-1)^{j+1}{\alpha }C_\alpha\overline{w}\overline{\phi_j^\prime(w)}\fint_\mathbb{T}\frac{\textnormal{Re}\big[\big(\phi_j(w)-\phi_j(\tau)\big)\big(\overline{h_j(w)}-\overline{h_j(\tau)}\big)\big]\phi_j^\prime(\tau)}{\vert \phi_j(w)-\phi_j(\tau)\vert^{\alpha+2}}d\tau\Bigg\}.
\end{align}
In addition,  the partial derivative  $\partial_\Omega DL_j: \mathbb{R}\times B_r\to \mathcal{L}(C_{ar}^{2-\alpha}(\mathbb{T}), H)$  exists and is continuous. It is given by the formula, 
\begin{equation*}
\partial_\Omega DL_j(\Omega,f_j)h_j(w) =\textnormal{Im} \Big\{\overline{w}\,{\overline{h_j'(w)}}\,\phi_j(w)+\overline{w}\,{\overline{\phi_j'(w)}}\,h_j(w)\Big\}.
\end{equation*}
If we prove the regularity properties for  the second part $N_j$ then we can easily deduce that
\begin{eqnarray*}
\partial_\Omega DG_j(\Omega,f_1,f_2)(h_1,h_2)(w)&=& \partial_\Omega DL_j(\Omega,f_j)h_j(w)\\
 &=&\textnormal{Im} \Big\{\overline{w}\,{\overline{h_j'(w)}}\,\phi_j(w)+\overline{w}\,{\overline{\phi_j'(w)}}\,h_j(w)\Big\}.
\end{eqnarray*}
 
 Therefore all the regularity assumptions are satisfied for the terms $L_j$ and to complete the proof of the proposition we should check these assumptions for $N_j$. More precisely, we shall prove  that   $N_{j}: V\to H$ is well-defined and  is of class $C^1.$

{$\bf(1)$} First, we shall prove  that  for $ (f_1,f_2)\in V$ we have $N_{j}(f_1,f_2)\in C^{1-\alpha}(\mathbb{T})$. Because   the space $C^{1-\alpha}(\mathbb{T})$  is an algebra the problem reduces to show that for $i\neq j$, the function 
 $
 S({\phi_i},\phi_j)$ belongs to $C^{1-\alpha}(\mathbb{T})$.
This can be deduced easily from the next  general result. Let $(f_1,f_2)\in V$ and $\phi_1=\hbox{Id}+f_1$, $\phi_2=b\, \hbox{Id}+f_2$ and define the operator 
$$
\mathcal{T}\chi(w)\triangleq \fint_\mathbb{T}\frac{\chi(\tau)}{\vert \phi_j(w)-\phi_i(\tau)\vert^\alpha}d\tau,\quad w\in \mathbb{T}.
$$
 Then
\begin{equation}\label{Singds}
\big\|\mathcal{T}\chi\big\|_{C^{1-\alpha}(\mathbb{T})}\le C\| \phi_j\|_{C^{1-\alpha}(\mathbb{T})}\|\chi\|_{L^\infty (\mathbb{T})}.
\end{equation}
The proof of this inequality will be done  in a straightforward way since as we shall see the kernel is not singular. This is due  to the fact that the inner and the outer boundaries do not intersect. Indeed,  for all $w,\tau\in \mathbb{T}$  we can write
\begin{eqnarray*}
\big\vert\phi_1(w)-\phi_2(\tau)\big\vert &\geq &\vert w-b\tau\vert-\big\vert f_1(w)\big\vert -\big\vert  f_2(\tau)\big\vert\notag \\ &\geq & \big(1-b)-\Vert f_1\Vert_{L^\infty} -\Vert  f_2\Vert_{L^\infty}> \frac{1-b}{2}\cdot
\end{eqnarray*}
The same result remains true if we change $\tau$ by $w$ and therefore we get for $i\neq j$
\begin{eqnarray}\label{low}
\big\vert\phi_i(w)-\phi_j(\tau)\big\vert &\geq &\frac{1-b}{2}\cdot
\end{eqnarray}
 It follows  that 
\begin{eqnarray*}
\big\vert \mathcal{T}\chi(w)\big\vert&\lesssim &\mathop{{\int}}_\mathbb{T}\frac{\big\vert\chi({\tau})\big\vert }{\vert \phi_j({w})-\phi_i({\tau})\vert^\alpha}|d{\tau}| \lesssim  \|\chi\|_{L^\infty(\mathbb{T})},
\end{eqnarray*}
which implies  that
$$
\|T\chi\|_{L^\infty(\mathbb{T})}\le C  \|\chi\|_{L^\infty(\mathbb{T})}.
$$
Next take $w_1\neq w_2\in \mathbb{T}$. Using  the inequality \eqref{low} gives
\begin{eqnarray*}
\big\vert \mathcal{T}\chi(w_1)-\mathcal{T}\chi(w_2)\big\vert&\lesssim & \mathop{{\int}}_\mathbb{T}\bigg\vert\frac{1}{\vert \phi_j({w_1})-\phi_i({\tau})\vert^\alpha}-\frac{1}{\vert \phi_j({w_2})-\phi_i({\tau})\vert^\alpha}\bigg\vert \big\vert\chi({\tau})\big\vert| d{\tau}|\\ &\lesssim & \|\chi\|_{L^\infty(\mathbb{T})}\mathop{{\int}}_\mathbb{T}\Big\vert\vert \phi_j({w_1})-\phi_i({\tau})\vert^\alpha-\vert \phi_j({w_2})-\phi_i({\tau})\vert^\alpha\Big\vert | d{\tau}|\\ &\lesssim &  \|\chi\|_{L^\infty(\mathbb{T})}\big\vert \phi_j({w_1})- \phi_j({w_2})\big\vert,
\end{eqnarray*}
where we have used in the last estimate the following  inequality:
 for $\alpha\in (0,1),$  there exists a constant $C>0,$ such that 
\begin{equation}\label{eq0}
\forall  a,b\in\RR^*_+,\quad \vert a^\alpha-b^\alpha\vert \leq C\frac{\vert a-b\vert}{a^{1-\alpha}+b^{1-\alpha}}\cdot
\end{equation}

 Finally, using the fact that  $\phi_i\in C^{2-\alpha}(\mathbb{T})\hookrightarrow C^{1-\alpha}(\mathbb{T})$  one can conclude that
\begin{equation*}
\big\vert \mathcal{T}\chi(w_1)-\mathcal{T}\chi(w_2)\big\vert\lesssim  \|\chi\|_{L^\infty(\mathbb{T})} \|\phi_j\|_{C^{1-\alpha}(\mathbb{T})}\vert w_1-w_2\vert^{1-\alpha},
\end{equation*}
which is the desired result. Now applying \eqref{Singds} to  the operator $S$ we get
\begin{eqnarray}\label{Sphi}
\nonumber\big\|S({\phi_i},\phi_j)\big\|_{C^{1-\alpha}(\mathbb{T})}&\le& C\| \phi_j\|_{C^{1-\alpha}(\mathbb{T})}\|\phi_i'\|_{L^\infty(\mathbb{T})}\\
&\leq& C\| \phi_j\|_{C^{2-\alpha}(\mathbb{T})}\|\phi_i\|_{C^{2-\alpha}(\mathbb{T})}.
\end{eqnarray}

To complete the  proof of the first point we  shall  verify that the Fourier coefficients of $N_{j}(f_1,f_2)$ belong to $i\RR$. From the definition of the space $X $ in \eqref{x} the mapping $\phi_j$ has real Fourier coefficients and thus the Fourier  coefficients of $\phi'_j$ are real too. Since this  property is stable under the conjugation and the  multiplication the problem reduces to prove that the Fourier coefficients of $S({\phi_i},\phi_j)$ are real.  For this last purpose, we  take the conjugate and make a change  of variables, 
\begin{eqnarray*}
\overline{S({\phi_i},\phi_j)(w)}&=&-C_\alpha\fint_\mathbb{T}\frac{\phi_i'(\overline{\tau})d\overline{\tau}}{\vert \phi_j(\overline{w})-\phi_i(\overline{\tau})\vert^\alpha}\\ &=& C_\alpha\fint_\mathbb{T}\frac{\phi_i'(\zeta)d\zeta}{\vert \phi_j(\overline{w})-\phi_i(\zeta)\vert^\alpha}\\ &=& S({\phi_i},\phi_j)(\overline{w}).
\end{eqnarray*}
This proves that  the Fourier coefficients of the  functions $S(\phi_i,\phi_j)$  are real and the proof of the first part $(1)$ is now achieved.

\vspace{0,5cm}
{\bf (2)} The strategy to get  that $N_{j}$ is of class  $C^1$ on $V$ consists first in checking the existence of its  G\^ateaux derivative. Second we  show that the G\^ateaux derivative is strongly continuous. This will ensure in the same time the existence of Fr\'echet derivative and its continuity. \\
The G\^ateaux derivative of the function  $N_{j}$ at $(f_1,f_2)$ in the direction  $h=(h_1,h_2)\in X$ is given by the formula
\begin{align}\label{gatderiv}
\nonumber DN_{j}(f_1,f_2)h &= D_{f_1}N_{j}(f_1,f_2)h_1\,\,+\,\, D_{f_2}N_{j}(f_1,f_2)h_2\\  &\triangleq \lim_{t\to 0}\frac{1}{t}\Big[N_{j}(f_1+th_1,f_2)-N_{j}(f_1,f_2)\Big]+\lim_{t\to 0}\frac{1}{t}\Big[N_{j}(f_1,f_2+th_2)-N_{j}(f_1,f_2)\Big],
\end{align}
where the limits are taken in the strong topology of $Y$. Thus we shall first prove the existence of these limit in the pointwise sense, that is for every point $w\in\mathbb{T},$ and after check  that these limits exist in the strong topology of $C^{1-\alpha}(\mathbb{T})$.

 Let us first check   for each point $(f_1,f_2)\in  V$ the existence of $D_{f_j} N_{j}(f_1,f_2)$ as a linear and bounded operator, that is, $D_{f_j} N_{j}(f_1,f_2)\in\mathcal{L}(C^{2-\alpha}_{ar}(\mathbb{T}),H)$. With the notation $\phi_1=\hbox{Id}+f_1$ and $\phi_2=b\,\hbox{Id}+f_2$, one has
\begin{align}\label{d1}
\nonumber  D_{f_j} N_{j}(f_1,f_2)h_j(w)=(-1)^{j-1}\textnormal{Im} \bigg\{& \overline{w}\,{\overline{h_j'(w)}} S({\phi_i},\phi_j)(w)\\
&+\overline{w}\,{\overline{\phi_j^\prime(w)}}\frac{d}{dt}_{\Big|t=0}S\big({\phi_i}(w),\phi_j(w)+th_j(w)\big)\bigg\}.
\end{align}
We shall make use of  the following identity: let $A\in \CC^\star$, $ B\in \CC$, $\alpha\in \RR$ and define the function $K:t\mapsto |A+Bt|^\alpha$ which is smooth close to zero, then we have
\begin{equation}\label{der1}
K^\prime(0)=\alpha |A|^{\alpha-2}\textnormal{Re}(\overline{A}B).
\end{equation}
 Combining this formula  with few easy computations  yield
 \begin{alignat*}{2}
\nonumber\frac{d}{dt}_{\Big|t=0}S({\phi_i},\phi_j+th_j)(w)=&-\frac{\alpha }{2}C_\alpha\bigg[&&{\overline{h_j(w)}}\fint_\mathbb{T}\frac{\big(\phi_j(w)-\phi_i(\tau)\big)\phi_i^\prime(\tau)}{\vert \phi_j(w)-\phi_i(\tau)\vert^{\alpha+2}}d\tau\\ & &&+{h_j(w)}\fint_\mathbb{T}\frac{\big(\overline{\phi_j(w)}-\overline{\phi_i(\tau)}\big)\phi_i^\prime(\tau)}{\vert \phi_j(w)-\phi_i(\tau)\vert^{\alpha+2}}d\tau\bigg]
\notag\\ 
\triangleq&-\frac{\alpha}{2}C_\alpha\Big[ &&{\overline{h_j(w)}}{A}_{i}\big(\phi_j\big)(w)+{h_j(w)}{B}_{i}\big(\phi_j\big)(w)\Big].
\end{alignat*}
Therefore,
 \begin{align}\label{gat22}
  D_{f_j} N_{j}(\Omega,f_1,f_2)h_j(w)= (-1)^{j-1}\,\textnormal{Im} \bigg\{& \overline{w}\,{\overline{h_j'(w)}} S\big({\phi_i},\phi_j\big)(w)\notag\\ &-\frac\alpha2C_\alpha\overline{w}\,{\overline{\phi_j^\prime(w)}}\Big[ {\overline{h_j(w)}}{A}_{i}\big(\phi_j\big)(w)+{h_j(w)}{B}_{i}\big(\phi_j\big)(w)\Big]\bigg\}.
\end{align}

Using the algebra structure of  $C^{1-\alpha}(\mathbb{T})$   combined with the estimate \eqref{Sphi}, we get
\vspace{0,2cm}
\begin{equation}\label{ess1}
\big\|D_{f_j} N_{j}(f_1,f_2)h_j\big\|_{C^{1-\alpha}(\mathbb{T})}
\lesssim \Big(1+\| {A}_{i}(\phi_j)\|_{C^{1-\alpha}(\mathbb{T})}+\| {B}_{i}\big(\phi_j\big)\|_{C^{1-\alpha}}\Big)\|h_j\|_{C^{2-\alpha}(\mathbb{T})}.
\end{equation}
It remains to  estimate the terms  ${A}_{i}(\phi_j)$ and ${B}_{i}(\phi_j)$. For the first one, we get by virtue \mbox{of   \eqref{low},}
\begin{eqnarray*}
\big\vert {A}_{i}\big(\phi_j\big)(w)\big\vert&\lesssim & \int_\mathbb{T}\frac{\big\vert\phi_i^\prime(\tau)\big\vert}{\vert \phi_j(w)-\phi_i(\tau)\vert^{\alpha+1}}| d{\tau}|\\ &\lesssim & \|\phi_i'\|_{L^\infty(\mathbb{T})}.
\end{eqnarray*}
Now let $w_1\neq w_2\in \mathbb{T}$,  then
\begin{eqnarray}\label{A}
\nonumber\Big\vert {A}_{i}\big(\phi_j\big)(w_1)-{A}_{i}\big(\phi_j\big)(w_2)\Big\vert & \leq & \int_\mathbb{T}\Big\vert\frac{\overline{\phi_j(w_1)}-\overline{\phi_i(\tau)}}{\vert \phi_j({w_1})-\phi_i({\tau})\vert^{\alpha+2}}-\frac{\overline{\phi_j(w_2)}-\overline{\phi_i(\tau)}}{\vert \phi_j({w_2})-\phi_i({\tau})\vert^{2+\alpha}}\Big\vert  \vert\phi_i'(\tau)\vert | d{\tau}|\\ &\triangleq & \int_\mathbb{T}\big\vert K(w_1,\tau)-K(w_2,\tau)
\big\vert \vert\phi_i'(\tau)\vert | d{\tau}|.
\end{eqnarray}
Few easy computations  show that
\begin{eqnarray}\label{eqk}
\nonumber\big\vert K(w_1,\tau)- K(w_2,\tau)\big\vert 
  & \lesssim & \frac{\big\vert{\phi_j(w_2)}-{\phi_j(w_1)}\big\vert}{\vert \phi_j({w_1})-\phi_i({\tau})\vert^{\alpha+2}}\\ &+& \frac{\Big\vert \vert \phi_j({w_2})-\phi_i({\tau})\vert^{\alpha+2}-\vert \phi_j({w_1})-\phi_i({\tau})\vert^{\alpha+2}\Big\vert}{\vert \phi_j({w_1})-\phi_i({\tau})\vert^{\alpha+2}\vert \phi_j({w_2})-\phi_i({\tau})\vert^{\alpha+1}}\cdot
\end{eqnarray}
Concerning  the last term we shall use the following inequality whose proof is classical.
\begin{equation}\label{eq1}
\vert a^{k+1+\alpha}-b^{k+1+\alpha}\vert \leq C(k,\alpha)\vert a-b\vert \big( a^{k+\alpha}+b^{k+\alpha}\big),\quad a,b\in\RR_+,k\in \NN^*,\,0<\alpha .
\end{equation}
Hence, we get
\begin{align*}
 \frac{\Big\vert \vert \phi_j({w_2})-\phi_i({\tau})\vert^{\alpha+2}-\vert \phi_j({w_1})-\phi_i({\tau})\vert^{\alpha+2}\Big\vert}{\vert \phi_j({w_1})-\phi_i({\tau})\vert^{\alpha+2}\vert \phi_j({w_2})-\phi_i({\tau})\vert^{\alpha+1}} &\lesssim  \frac{\big\vert{\phi_j(w_2)}-{\phi_j(w_1)}\big\vert}{\vert \phi_j({w_1})-\phi_i({\tau})\vert^{\alpha+2}}\\ &+ \frac{\big\vert{\phi_j(w_2)}-{\phi_j(w_1)}\big\vert}{\vert \phi_j({w_1})-\phi_i({\tau})\vert\vert \phi_j({w_2})-\phi_i({\tau})\vert^{\alpha+1}}.
\end{align*}
Inserting this  in the estimate \eqref{eqk} and  using the inequality \eqref{low} we find
\begin{eqnarray*}
\big\vert K(w_1,\tau)- K(w_2,\tau)\big\vert  & \lesssim & \big\vert{\phi_j(w_2)}-{\phi_j(w_1)}\big\vert \\  & \lesssim &  \| \phi_j\|_{C^{1-\alpha}}\vert w_2-w_1\vert^{1-\alpha}.
\end{eqnarray*}
Now by plugging the latter estimate into  \eqref{A} one gets
\begin{eqnarray*}
\Big\vert {A}_{i}\big(\phi_j\big)(w_1)-{A}_{i}\big(\phi_j\big)(w_2)\Big\vert & \leq&  C\vert w_2-w_1\vert^{1-\alpha}
\end{eqnarray*}
which is the desired result. The estimate of the term ${B}_{i}\big(\phi_j\big)$ can be done in a similar way by observing that
$$
{B}_{i}\big(\phi_j\big)(w)=\fint_{\mathbb{T}} \overline{K(w,\tau)}\phi_i^\prime(\tau)d\tau.
$$
Consequently, from \eqref{ess1} we deduce that
$$
\|D_{f_j} N_{j}(f_1,f_2)h_j\|_{C^{1-\alpha}(\mathbb{T})}\le C\| h_1\|_{C^{2-\alpha}(\mathbb{T})}.
$$
This means  that $D_{f_j} N_j(f_1,f_2)\in \mathcal{L}(C^{1-\alpha}_{ar}(\mathbb{T}),H).$ 

\vspace{0,5cm}
Let us now move  to the computation of $D_{f_i} N_{j}(f_1,f_2)h_i$, for $i\neq j$ when $(f_1,f_2)\in V$ and  $h_i\in C^{2-\alpha}_{ar}(\mathbb{T})$. From the  definition, we obtain the formula
\begin{eqnarray*}
  D_{f_i} N_{j}(f_1,f_2)h_i(w)=(-1)^{j-1} \textnormal{Im} \bigg\{\overline{w}\,\overline{\phi_j^\prime(w)}\frac{d}{dt}_{\Big|t=0}S\big({\phi_i(w)+th_i(w)},\phi_j(w)\big)
 \bigg\}.
 \end{eqnarray*}
Some easy computations combined with the relation \eqref{der1} allow to get
 \begin{align*}\label{s2}
\nonumber\frac{d}{dt}_{\Big|t=0}S\big({\phi_i(w)+th_i(w)},\phi_j(w)\big)  =&\,\, C_\alpha\fint_\mathbb{T}\frac{h_i^\prime(\tau)}{\big\vert\phi_j(w)-\phi_i(\tau)\big\vert^\alpha}d\tau\\
\nonumber&+\frac\alpha2C_\alpha\fint_\mathbb{T}\frac{\big(\overline{\phi_j(w)}-\overline{\phi_i(\tau)}\big){h_i(\tau)}\phi_i^\prime(\tau)}{\vert \phi_j(w)-\phi_i(\tau)\vert^{\alpha+2}}d\tau\\ &+ \frac\alpha2C_\alpha\fint_\mathbb{T}\frac{\big(\phi_j(w)-\phi_i(\tau)\big)\overline{h_i(\tau)}\phi_i^\prime(\tau)}{\vert \phi_j(w)-\phi_i(\tau)\vert^{\alpha+2}}d\tau \notag\\ \triangleq &\,\,  C_i\big(\phi_j,h_i\big)(w)+D_i\big(\phi_j,h_i\big)(w)+ E_i\big(\phi_j,h_i\big)(w)\cdot
\end{align*}
It follows that,
 \begin{equation}\label{gat223}
D_{f_i} N_{j}(f_1,f_2)h_i(w)  = \textnormal{Im} \bigg\{\overline{w}{\overline{\phi_j^\prime(w)}}\Big[{C}_i\big(\phi_j,h_i\big)(w)+{D}_i\big(\phi_j,h_i\big)(w)+ E_i\big(\phi_j,h_i\big)(w)\Big]\bigg\}.
\end{equation}
Since $C^{1-\alpha}(\mathbb{T})$ is an algebra one finds that
$$
\big\|D_{f_j} N_{j}(\Omega,f_1,f_2)h_j\big\|_{C^{1-\alpha}(\mathbb{T})}\lesssim  \| C_i(\phi_j,h_i)\|_{C^{1-\alpha}}+\|D_i(\phi_j,h_i)\|_{C^{1-\alpha}(\mathbb{T})}+\|E_i(\phi_j,h_i)\|_{C^{1-\alpha}(\mathbb{T})}.
$$
The estimate of the term $C_i(\phi_j,h_i)$  follows immediately from \eqref{Singds} and we get,
$$
\| C_i(\phi_j,h_i)\|_{C^{1-\alpha}(\mathbb{T})}\lesssim \|\phi_j\|_{C^{1-\alpha}(\mathbb{T})}\|h^\prime_i\|_{L^\infty(\mathbb{T})}.
$$
For the terms $D_i(\phi_j,h_i)$ and $E_i(\phi_j,h_i)$ we can proceed  similarly  as for $A_i(\phi_j,h_j)$ and we find
$$
 \| D_i(\phi_j,h_i)\|_{C^{1-\alpha}(\mathbb{T})}+ \| E_i(\phi_j,h_i)\|_{C^{1-\alpha}(\mathbb{T})}\lesssim \|h_i\|_{L^\infty(\mathbb{T})}.
$$
Putting together the preceding estimates yields,
$$
\|D_{f_i} N_{j}(f_1,f_2)h_i\|_{C^{1-\alpha}(\mathbb{T})}\le C\| h_i\|_{C^{2-\alpha}(\mathbb{T})}.
$$
This shows  that $D_{f_i} N_j(f_1,f_2)\in \mathcal{L}(C^{1-\alpha}_{ar}(\mathbb{T}),H).$

 \vspace{0,5cm}
\quad To achieve the existence proof  of the G\^ateaux derivatives it remains to check that the convergence in \eqref{gatderiv}  occurs in the strong topology of $C^{1-\alpha}(\mathbb{T}).$ There are many terms to analyze and they  can be treated in a similar way. The computations are straightforward but slightly long and we prefer just to treat a significant term and the remaining ones are quite similar. For example in the first term of the right-hand side of \eqref{d1} we need  to check 
\begin{equation*}\label{conv}
\lim_{t\to 0}S\big({\phi_i},\phi_j+th_j\big)-S\big({\phi_i},\phi_j\big)=0\quad\textnormal{in}\quad C^{1-\alpha}(\mathbb{T}).
\end{equation*}
To simplify the notation we set
$$
T_{ij}(t,w)=S\big({\phi_i},\phi_j+th_j\big)(w)-S\big({\phi_i},\phi_j\big)(w).
$$Let $t>0$ such that $t\|h_j\|_{L^\infty}<(1-b)/2$. Then  by  \eqref{S} we have
\begin{eqnarray*}
T_{ij}(t,w)&=& \fint_\mathbb{T}\Big(\frac{\phi_i'(\tau)}{\vert \phi_j(w)-\phi_i(\tau)+th_j(w)\vert^\alpha}-\frac{\phi_i'(\tau)}{\vert \phi_j(w)-\phi_i(\tau)\vert^\alpha}\Big)d\tau\\ &\triangleq & \fint_\mathbb{T}K(t,w,\tau)\phi_i'(\tau)d\tau.
\end{eqnarray*}
It follows from the inequalities \eqref{low} and \eqref{eq0} that
$$
\big|K(t,w,\tau)\big|\lesssim  t\|h_j\|_{L^{\infty}(\mathbb{T})}
$$
which implies in turn,
\begin{eqnarray*}
\big|T_{ij}(t,w)\big| &\lesssim  t\|h_j\|_{L^{\infty}(\mathbb{T})}.
\end{eqnarray*}
Therefore we get
$$
\lim_{t\to 0}\big\|T_{ij}(t,\cdot)\big\|_{L^\infty(\mathbb{T})}=0.
$$
Now for $w_1\neq w_2\in\mathbb{T}$, we write by the Mean Value Theorem
\begin{eqnarray}\label{t}
\nonumber \big| T_{ij}(t,w_1)-T_{ij}(t,w_2)\big|&\lesssim &\nonumber \int_\mathbb{T}\Big| K(t,w_1,\tau)-K(t,w_2,\tau)\Big||d\tau|\\ &\lesssim &\big| w_1-w_2\big|\int_\mathbb{T}\sup_{w\in \mathbb{T}}\big| \partial_wK(t,w,\tau)\big||d\tau|.
\end{eqnarray}
Observe that $K(t,w,\tau)$ can be rewritten in the integral form
\begin{eqnarray*}
K(t,w,\tau)&=&\int_0^t\partial_s g(s,w,\tau)ds,\quad\textnormal{with}\quad g(t,w,\tau)\triangleq \frac{1}{\vert \phi_j(w)-\phi_i(\tau)+th_j(w)\vert^\alpha}\cdot
\end{eqnarray*}
Thus,
$$
\big|\partial_wK(t,w,\tau)\big|\le\int_0^t\big|\partial_w\partial_s g(s,w,\tau)\big|ds.
$$
In view of the formula \eqref{deriv-bar} we readily obtain
\begin{align*}
\partial_wg(t,w,\tau)=\frac{-\alpha}{2}\bigg[&\big(\phi_j'(w)+th_j^\prime(w)\big)\frac{{\overline{\phi_j(w)}-\overline{\phi_i(\tau)}+t\overline{h_j(w)}}}{\vert \phi_j(w)-\phi_i(\tau)+th_j(w)\big)\vert^{2+\alpha}}
\\
&-\frac{\overline{\phi_j'(w)}+t\overline{h_j^\prime(w)}}{w^2}\frac{\Big({\phi_j(w)-\phi_i(\tau)+th_j(w)\big)}\Big)}{{\vert \phi_j(w)-\phi_i(\tau)+th_j(w)\big)\vert^{2+\alpha}}}\bigg].
\end{align*}
Using straightforward computations combined with the inequality \eqref{low} yield for any $s\in [0,t],$
$$
\Big|\partial_s\partial_wg(s,w,\tau)\Big|\le C.
$$
Hence we get,
$$
\big|\partial_wK(t,w,\tau)\big|\le C|t|.
$$
This implies according to the estimate \eqref{t} that
\begin{eqnarray*}
 \big| T_{ij}(t,w_1)-T_{ij}(t,w_2)\big| &\leq & C| t|\big| w_1-w_2\big|
  \end{eqnarray*}
and consequently,
$$
\lim_{t\to 0}\| T_{ij}(t,\cdot)\|_{C^{1-\alpha}(\mathbb{T})}=0.
$$
This  concludes the desired result.

\vspace{0,4cm}


\quad The next  task  is to show that the G\^ateaux derivatives   are continuous operators from  the neighborhood $V$ into the Banach space $\mathcal{L}(C_{ar}^{1-\alpha}(\mathbb{T}), H )$. 
From the identities \eqref{gat22} and \eqref{gat223} and since $C^{1-\alpha}(\mathbb{T})$ is an algebra the problem amounts  to showing the continuity of the terms
$S({\phi_i},\phi_j)$, $A_i(\phi_j)$, $B_i(\phi_j)$, ${C}_i(\phi_j,h_i)$, ${D}_i(\phi_j,h_i)$ and ${E}_i(\phi_j,h_i)$.
We shall present here the complete details for  the term $S({\phi_i},\phi_j)$, with $i\neq j$ and the other terms can be  dealt via straightforward variations. 
 Set  
 $$\phi_{1}=\hbox{Id}+f_1,\quad\psi_{1}=\hbox{Id}+g_1,\quad \phi_{2}=b\,\hbox{Id}+f_2,\quad \psi_{2}=b\, \hbox{Id}+g_2,
 $$ 
 with $(f_1,f_2)$ and $(g_1,g_2)\in V$. We shall prove  the estimate
$$
\|S({\phi_i},\phi_j)-S({\psi_i},\psi_j)\|_{C^{1-\alpha}}\le C\Big(\Vert f_1-g_1\Vert_{C^{2-\alpha}(\mathbb{T})}+\Vert f_2-g_2\Vert_{C^{2-\alpha}(\mathbb{T})}\Big).
$$
In view of \eqref{S} we may write
\begin{eqnarray}\label{lst}
\nonumber S({\phi_i},\phi_j)(w)-S({\psi_i},\psi_j)(w)&=&\fint_\mathbb{T}\Big(\frac{\phi_i^\prime(\tau)}{\vert \phi_j(w)-\phi_i(\tau)\vert^\alpha}-\frac{\psi_i^\prime(\tau)}{\vert \psi_j(w)-\psi_i(\tau)\vert^\alpha}\Big)d\tau\\
&=&
\fint_\mathbb{T}\tilde{K}(w,\tau)\psi_i'(\tau)d\tau+\fint_\mathbb{T}\frac{\phi_i^\prime(\tau)-\psi_i^\prime(\tau)}{\vert \phi_j(w)-\phi_i(\tau)\vert^\alpha}d\tau,
\end{eqnarray}
with
$$ 
\tilde{K}(w,\tau)\triangleq \frac{1}{\vert \phi_j(w)-\phi_i(\tau)\vert^\alpha}-\frac{1}{\vert \psi_j(w)-\psi_i(\tau)\vert^\alpha}\cdot
$$
The estimate of the last term in \eqref{lst} follows immediately from \eqref{Singds}, that is,
\begin{eqnarray}\label{fst}
\nonumber\Big\|\fint_\mathbb{T}\frac{\phi_i^\prime(\tau)-\psi_i^\prime(\tau)}{\vert \phi_j(\cdot)-\phi_i(\tau)\vert^\alpha}d\tau\Big\|_{C^{1-\alpha}(\mathbb{T})}&\le&\nonumber C\|f_i^\prime-g_i^\prime\|_{L^\infty}\\
&\le&C\|f_i-g_i\|_{{C^{2-\alpha}(\mathbb{T})}}.
\end{eqnarray}
To control the remaining term we  introduce the functional
$$
L(w)\triangleq \fint_\mathbb{T}\tilde{K}(w,\tau)\psi'(\tau)d\tau.
$$
Owing to the inequalities \eqref{low} and \eqref{eq0} one has
\begin{eqnarray}\label{Linf}
\nonumber\vert L(w)\vert &\lesssim & \|{\phi_i}\|_{\textnormal{Lip}(\mathbb{T})}\mathop{{\int}}_\mathbb{T} \frac{\big|\vert \psi_j(w)-\psi_i(\tau)\vert^\alpha-\vert \phi_j(w)-\phi_i(\tau)\vert^\alpha\big|}{\vert \phi_j(w)-\phi_i(\tau)\vert^\alpha\vert \psi_j(w)-\psi_i(\tau)\vert^\alpha}| d{\tau}|\\ 
\nonumber &\lesssim & \| \psi_j-\phi_j\|_{L^\infty(\mathbb{T})}\\ &\lesssim &\|f_j-f_j\|_{L^\infty(\mathbb{T})}.
\end{eqnarray}
Now let  $w_1\neq w_2\in \mathbb{T}$, then we have
\begin{eqnarray}\label{L}
\nonumber \vert L(w_1)-L(w_2)\vert &\lesssim &   \mathop{{\int}}_\mathbb{T}\big\vert\tilde{K}(w_1,\tau)-\tilde{K}(w_2,\tau)\big\vert | d{\tau}|\\ &\lesssim & | w_1-w_2| \int_{\mathbb{T}} \sup_{w\in\mathbb{T}}\big|\partial_w \tilde{K}(w,\tau)\big|| d{\tau}|.
\end{eqnarray}
In view of \eqref{deriv-bar} the derivative of $K(w,\tau)$ with respect to $w$ is given by
\begin{eqnarray*}
\partial_w \tilde{K}(w,\tau)&=& -\frac{\alpha}{2}\Big(\overline{\mathcal{I}(w,\tau)}-\frac{\mathcal{I}(w,\tau)}{w^2}\Big),
\end{eqnarray*}
where
\begin{eqnarray*}
\mathcal{I}(w,\tau)&\triangleq&\overline{\phi_j'(w)}\frac{\phi_j(w)-\phi_i(\tau)}{\vert \phi_j(w)-\phi_i(\tau)\vert^{\alpha+2}}-\overline{\psi_j'(w)}\frac{\psi_j(w)-\psi_i(\tau)}{\vert \psi_j(w)-\psi_i(\tau)\vert^{\alpha+2}}\cdot
\end{eqnarray*}
We shall transform this quantity into, 
$$
\mathcal{I}(w,\tau)= \mathcal{I}_1(w,\tau)+\mathcal{I}_2(w,\tau)+\mathcal{I}_3(w,\tau),
$$
with
$$
\mathcal{I}_1(w,\tau)\triangleq \overline{\phi_j'(w)}\, \frac{({\phi_j}-{\psi_j})(w)-({\phi_i}-{\psi_i})(\tau)}{\vert \phi_j(w)-\phi_i(\tau)\vert^{\alpha+2}},
$$
$$
\mathcal{I}_2(w,\tau)\triangleq \big(\overline{\phi_j'(w)}-\overline{\psi_j'(w)}\big)\frac{{\psi}_j(w)-{\psi}_i(\tau)}{\vert \psi_j(w)-\psi_i(\tau)\vert^{\alpha+2}},
 $$
 and
 $$
 \mathcal{I}_3(w,\tau)\triangleq \overline{\phi_j'(w)}\big({\psi}_i(\tau)-{\psi}_j(w)\big)\frac{ \vert \phi_j(w)-\phi_i(\tau)\vert^{\alpha+2}-\vert \psi_j(w)-\psi_i(\tau)\vert^{\alpha+2}}{\vert \phi_j(w)-\phi_i(\tau)\vert^{\alpha+2}\vert \psi_j(w)-\psi_i(\tau)\vert^{\alpha+2}}\cdot
$$
For the first and the second terms one readily gets by Sobolev embeddings
\begin{eqnarray}\label{i1+i2}
\vert\mathcal{I}_1(w,\tau)\vert+\vert \mathcal{I}_2(w,\tau)\vert &\lesssim &\|\phi_i-\psi_i\|_{C^{2-\alpha}(\mathbb{T})}+\|\phi_j-\psi_j\|_{C^{2-\alpha}(\mathbb{T})}.
\end{eqnarray}
To estimate  the last term we shall use the inequality \eqref{eq1} combined with the estimate \eqref{low},
\begin{eqnarray*}
\frac{ \big|\vert \phi_j(w)-\phi_i(\tau)\vert^{\alpha+2}-\vert \psi_j(w)-\psi_i(\tau)\vert^{\alpha+2}\big|}{\vert \phi_j(w)-\phi_i(\tau)\vert^{\alpha+2}\vert \psi_j(w)-\psi_i(\tau)\vert^{\alpha+2}} &\lesssim &\|\phi_j-\psi_j\|_{L^\infty(\mathbb{T})}
\end{eqnarray*}
and consequently,
\begin{eqnarray}\label{i03}
\vert\mathcal{I}_3(w,\tau)\vert\lesssim \|\phi_j-\psi_j\|_{C^{2-\alpha}(\mathbb{T})}.
\end{eqnarray}
Putting together \eqref{i1+i2} and  \eqref{i03} we find, 
$$
\vert\mathcal{I}(w,\tau)\vert\lesssim \|\phi_1-\psi_1\|_{C^{2-\alpha}(\mathbb{T})}+\|\phi_2-\psi_2\|_{C^{2-\alpha}(\mathbb{T})}.
$$
Therefore
$$
\vert\partial_w \tilde{K}(w,\tau)\vert\leq C\big(f_1-g_1\|_{C^{2-\alpha}(\mathbb{T})}+\|f_2-g_2\|_{C^{2-\alpha}(\mathbb{T})}\big).
$$
Inserting this  inequality into the estimate \eqref{L}   we get
\begin{eqnarray*}
\nonumber \vert L(w_1)-L(w_2)\vert  &\leq & C\big(f_1-g_1\|_{C^{2-\alpha}(\mathbb{T})}+\|f_2-g_2\|_{C^{2-\alpha}(\mathbb{T})}\big)| w_1-w_2|^{1-\alpha} .
\end{eqnarray*}
Putting together the last estimate with the estimate \eqref{fst}, we obtain
$$
 \|S(\phi_i,\phi_j)-S(\psi_i,\psi_j)\|_{C^{1-\alpha}(\mathbb{T})}\lesssim \|f_1-g_1\|_{C^{2-\alpha}(\mathbb{T})}+\|f_2-g_2\|_{C^{2-\alpha}(\mathbb{T})}.
 $$
 This concludes the proof of the Proposition \refeq{reg}.
\end{proof}

\section{Spectral study}

The main goal of  this section is to perform a spectral study of the linearized operator of  $G$  at the annular solution $(\Omega,0,0)$ and denoted by the differential $D G (\Omega,0,0)$.  In particular, we shall identify the values $\Omega$  for which the kernel of  $DG(\Omega,0,0)$ is not trivial leading to what we call the dispersion relation. Therefore  the next step is to look  among the "nonlinear eigenvalues" $\Omega$  those corresponding to one-dimensional kernels which is an important assumption in  Crandall-Rabinowitz's Theorem.   This task is very  complicate compared to the previous cases discussed in \cite{Bur, H-H,H-F-M-V}. This is due to the  multiple parameters $\alpha, b$ and $m$ in this problem and especially to the  nonlinear and implicit structure of the coefficients appearing in  the  dispersion relation. We will be   able to    validate only a sufficient, but still  a satisfactory result compared to Euler equation, with a restriction on the symmetry of the V-states. This will be deeply discussed in the  Subsection \ref{subsec12} devoted to the  monotonicity of the eigenvalues.

\subsection{Linearized operator}  
We propose  to compute explicitly   the differential $DG(\Omega,0,0)$ and  show that it acts as a Fourier multiplier.
 Since $G=(G_1,G_2)$ then for  given $(h_1,h_2) \in X,$ we have
\begin{eqnarray*}
DG(\Omega,0,0)(h_1,h_2)&=& \begin{pmatrix} D_{f_1}G_1(\Omega,0,0)h_1+D_{f_2}
G_1(\Omega,0,0)h_2 \\*[4pt] D_{f_1} G_2(\Omega,0,0)h_1+D_{f_2}
G_2(\Omega,0,0)h_2
\end{pmatrix}.
\end{eqnarray*}
where we recall the function spaces
$$
X=C_{ar}^{2-\alpha}(\mathbb{T})\times C_{ar}^{2-\alpha}(\mathbb{T}),
$$
and
$$
C_{ar}^{2-\alpha}(\mathbb{T})=\bigg\{ f\in C^{2-\alpha}(\mathbb{T}); f(w)=\sum_{n\geq 1} a_n\overline{w}^n, w\in \mathbb{T}, a_n\in \RR, n\in\NN^\star\bigg\}.
$$

Putting together the formulas \eqref{dlj}, \eqref{gat22} and \eqref{gat223} with $j=1$ and $j=2$,  where we replace  $\phi_1$ by $\hbox{Id}$ and $\phi_2$ by $b\,\hbox{Id}$ we get 
\begin{eqnarray}\label{df001}
DG_1(\Omega,0,0)h(w)= \Omega\,\mathcal{L}_0\big(h_1\big)(w)- C_\alpha\, \mathcal{L}_1\big(h_1\big)(w)+C_\alpha\,\mathcal{L}_2\big(h_1,h_2)(w),\quad
\end{eqnarray}
\begin{eqnarray}\label{df002}
\qquad\;\, DG_2(\Omega,0,0)h(w)=  b\Big(\Omega\,\mathcal{L}_0\big(h_2\big)(w)+b^{-\alpha}C_\alpha\,\mathcal{L}_1\big(h_2\big)(w)- C_\alpha\,\mathcal{L}_3\big(h_1,h_2\big)(w)\Big),
\end{eqnarray}
with
\begin{equation*}
\mathcal{L}_0\big(h_j\big)(w)\triangleq \textnormal{Im}\Big\{\overline{h_j'(w)}+\overline{w}{h_j(w)}\Big\},\qquad \qquad \qquad \qquad \qquad\quad
\end{equation*}
\begin{align*}
\mathcal{L}_1\big(h_j\big)(w)\triangleq \textnormal{Im}\bigg\{&\overline{w}{\overline{h_j'(w)}}\fint_\mathbb{T}\frac{d\tau}{\vert w-\tau\vert^\alpha}+\overline{w}\fint_\mathbb{T}\frac{h_j'(\tau)}{\vert w-\tau\vert^\alpha}d\tau\\
&-{\alpha\overline{w} }\fint_\mathbb{T}\frac{\Rea\big[(w-\tau)\big(\overline{h_j(w)}-\overline{h_j(\tau)}\big)\big]}{\vert w-\tau\vert^{\alpha+2}}d\tau\bigg\},
\end{align*}

\begin{align*}
\mathcal{L}_2\big(h_1,h_2\big)(w)\triangleq \Ima\bigg\{&b\overline{w}\overline{h_1^\prime(w)}\fint_\mathbb{T}\frac{d\tau}{\vert w-b\tau\vert^\alpha}+\overline{w}\fint_\mathbb{T}\frac{h_2'(\tau)d\tau}{\vert w-b\tau\vert^\alpha}\notag\\ &- {\alpha b\overline{w}}\fint_\mathbb{T}\frac{\Rea\big[(w-b\tau)\big(\overline{h_1(w)}-\overline{h_2(\tau)}\big)\big]d\tau}{\vert w-b\tau\vert^{\alpha+2}}\bigg\},
\end{align*}
and 
\begin{align*}
\mathcal{L}_3\big(h_1,h_2\big)(w)\triangleq \Ima\bigg\{&b{\overline{w}\,{\overline{h_2^\prime(w)}}}\fint_\mathbb{T}\frac{d\tau}{\vert bw-\tau\vert^\alpha}+\overline{w}\fint_\mathbb{T}\frac{h_1'(\tau)}{\vert bw-\tau\vert^\alpha}d\tau\\ &- {\alpha }\overline{w}\fint_\mathbb{T}\frac{\Rea\big[(bw-\tau)\big(\overline{h_2(w)}-\overline{h_1(\tau)}\big)\big]}{\vert bw-\tau\vert^{\alpha+2}}d\tau\bigg\}.
\end{align*}
We shall now  compute the Fourier series of the mapping $w \mapsto DG(\Omega,0,0)(h_1,h_2)(w)$ with
$$
h_1(w)=\sum_{n=1}^{\infty}a_n\overline{w}^n\quad\textnormal{and}\quad h_2(w)=\sum_{n=1}^{\infty}c_n\overline{w}^n, \quad w\in\mathbb{T},
 $$
 where  $a_n$ and $c_n$ are real for all $n\in \NN^\star$. This is summarized in the following lemma.
 \begin{lemma}\label{lem0}Let $\alpha\in (0,1)$ and $b\in(0,1)$.
 We set
\begin{equation}\label{Pi}
\Lambda_n(b)\triangleq \frac1b\int_0^{+\infty}\frac{J_n(bt)J_n(t)}{t^{1-\alpha}}dt,
\end{equation}
and
\begin{eqnarray*}
\Theta_{n}&\triangleq& \Lambda_1(1)-\Lambda_n(1),
\end{eqnarray*}
where $J_n$ refers to the Bessel function of the first kind. 
Then, we have
\begin{eqnarray}\label{dff}
DG(\Omega,0,0)\big(h_1,h_2\big)(w)=\frac{i}{2}\sum_{n\geq 1}\big(n+1\big)M^\alpha_{n+1}\left( \begin{array}{c}
a_n \\
c_n
\end{array} \right)\Big(w^{n+1}-\overline{w}^{n+1}\Big).
\end{eqnarray}
where the matrix $M_n$ is given for $n\geq2$  by
\begin{equation}\label{matrix}
M^\alpha_{n}\triangleq\begin{pmatrix}
 \Omega-\Theta_{n}+b^2\Lambda_{1}(b) & -b^2\Lambda_{n}(b) \\
  b\Lambda_n(b) & b\Omega+b^{1-\alpha}\Theta_{n}-b\Lambda_1(b)
\end{pmatrix}.
\end{equation}
The   determinant  of this matrix  is given  by
\begin{eqnarray}\label{cdet}
\textnormal{det}\big(M^\alpha_{n}\big)&=&  \Big(\Omega-\Theta_{n}+b^2\Lambda_{1}(b)\Big)\Big(b\Omega+b^{1-\alpha}\Theta_{n}-b\Lambda_1(b)\Big)+b^3\Lambda_{n}^2(b)\cdot \end{eqnarray}
 \end{lemma}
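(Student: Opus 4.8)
The plan is to linearize term by term. Writing $DG_1$ and $DG_2$ through the operators $\mathcal{L}_0,\dots,\mathcal{L}_3$ as in \eqref{df001}-\eqref{df002}, I would first dispatch the elementary piece $\mathcal{L}_0$: substituting $h_j(w)=\sum_n a_n\overline w^{\,n}$ and using $\tfrac{d}{dw}\overline w^{\,n}=-n\overline w^{\,n+1}$ on $\mathbb{T}$, a direct computation of $\textnormal{Im}\{\overline{h_j'(w)}+\overline w\,h_j(w)\}$ gives $\tfrac i2\sum_n(n+1)a_n(w^{n+1}-\overline w^{\,n+1})$, which already exhibits the Fourier-multiplier structure in \eqref{dff} and produces the diagonal $\Omega$ and $b\Omega$ entries of $M^\alpha_{n+1}$.

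The heart of the matter is the three singular operators, and the key observation is that their integrands are exactly the combinations tabulated in Lemma \ref{lem}. The non-derivative pieces (kernel $|w-b\tau|^{-\alpha}$, respectively $|bw-\tau|^{-\alpha}$) are handled by \eqref{In}; after expanding $\textnormal{Re}[\,\cdot\,]=\tfrac12(z+\overline z)$, the derivative pieces (kernel $|w-b\tau|^{-\alpha-2}$ or $|bw-\tau|^{-\alpha-2}$) split into one term matching the holomorphic template $(w-b\tau)(a w^{n}-c\tau^{n})$ of \eqref{Jn}/\eqref{Ln} and one matching the antiholomorphic template $(\overline w-b\overline\tau)(a\overline w^{\,n}-c\overline\tau^{\,n})$ of \eqref{Kn}/\eqref{Mn}, with the coefficient $a_n$ carried by the $\phi_1$-factor and $c_n$ by the $\phi_2$-factor. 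Applying these five formulas mode by mode turns each $\mathcal{L}_k$ into a series in $w^{n+1}-\overline w^{\,n+1}$ whose coefficients are explicit combinations of Gauss hypergeometric functions.

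Next I would collapse those hypergeometric combinations. Using the contiguous relations \eqref{f0}-\eqref{f5} (exactly as already done inside the proof of Lemma \ref{lem}) the several contiguous ${}_2F_1$'s coming from a given operator reduce to a single $F(\tfrac\alpha2,n+\tfrac\alpha2;n+1;b^2)$, and then the identification $C_\alpha\fint_\mathbb{T}\tfrac{\tau^{n-1}}{|w-b\tau|^\alpha}\,d\tau=b\,\Lambda_n(b)\,w^{n}$—which follows from \eqref{In} together with the Sonine-Schafheitlin formula \eqref{sonine} applied with $\mu=\nu=n$, $\lambda=1-\alpha$, $a=1$—rewrites everything in terms of $\Lambda_n(b)$ and $\Lambda_1(b)$. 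For the self-induced operator $\mathcal{L}_1$ the kernel is the $b=1$ specialization; there the hypergeometric series are evaluated at argument $1$ via Gauss's formula \eqref{id1} (legitimate since $\alpha\in(0,1)$ forces $\Rea(c-a-b)>0$), which produces $\Lambda_n(1)$, and the surviving combination is precisely $\Theta_n=\Lambda_1(1)-\Lambda_n(1)$. Assembling the four contributions with the prefactors displayed in \eqref{df001}-\eqref{df002} yields the stated matrix $M^\alpha_{n+1}$, and the determinant \eqref{cdet} is then immediate by expanding the $2\times2$ product.

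The main obstacle is the bookkeeping in the previous paragraph rather than any single hard estimate: one must track signs, keep straight which modes $w^{n+1}$ versus $\overline w^{\,n+1}$ each template feeds, manage the index shifts (the factor $n+1$ and the appearance of $M^\alpha_{n+1}$ acting on the $n$-th coefficients), and verify that after the contiguous-relation simplifications the cross terms combine into exactly $-b^2\Lambda_n(b)$ and $b\Lambda_n(b)$. A secondary technical point is justifying the $b\to1$ evaluation for $\mathcal{L}_1$, i.e. that the limiting hypergeometric values genuinely reproduce $\Lambda_n(1)$ and hence $\Theta_n$; this is the only place the restriction $\alpha<1$ is essential.
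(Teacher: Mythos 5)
Your proposal follows essentially the same route as the paper: compute $\mathcal{L}_0$ directly, reduce the singular operators mode by mode to the five integrals of Lemma \ref{lem}, collapse the resulting hypergeometric combinations with the contiguous relations \eqref{f0}--\eqref{f5}, and identify the surviving ${}_2F_1$'s with $\Lambda_n(b)$ via \eqref{lambdaf} (Sonine--Schafheitlin), the determinant then being a trivial expansion. The only deviation is that the paper imports the $\mathcal{L}_1$ contribution ready-made from \cite{H-H} and converts it to $\Theta_{n+1}$ via \eqref{theta}, whereas you rederive it as a $b\to1$ specialization; that works provided you perform the contiguous-relation reductions \emph{before} evaluating at argument $1$ (the individual ${}_2F_1$'s in \eqref{Jn}--\eqref{Mn} have $\Rea(c-a-b)=-\alpha<0$ and diverge there, while the reduced one has $\Rea(c-a-b)=1-\alpha>0$), which is consistent with the order of steps you describe.
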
 

 \begin{proof}
 First we shall compute  $DG_1(\Omega,0,0)(h_1,h_2)$. For this goal 
we  start with calculating  the  term  $\mathcal{L}_0\big(h_1(w)\big)$ of the right-hand side of \eqref{df001}  which is easy compared to the other terms. Thus by straightforward computations we obtain
\begin{eqnarray}\label{l0}
\mathcal{L}_0\big(h_1\big)(w)&=&\textnormal{Im}\bigg\{\sum_{n\geq1}\Big( a_n \overline{w}^{n+1}-na_n w^{n+1}\Big)\bigg\}\\
&=& \frac{i}{2}\sum_{n\geq1}(n+1)a_n\Big(w^{n+1}-\overline{w}^{n+1}\Big)\notag.
\end{eqnarray}
The computation of the second term $\mathcal{L}_1\big(h_1\big)(w)$ was done in the paper \cite{H-H} dealing with  the simply connected domain. It is given by 
\begin{eqnarray*}
C_\alpha\mathcal{L}_1\big(h_1\big)(w)=
  \frac{i}{2}\sum_{n\geq 1}a_n\big(n+1\big)\frac{\Gamma(1-\alpha)}{2^{1-\alpha}\Gamma^2(1-\frac\alpha2)} \bigg(\frac{\Gamma(1+\frac\alpha2)}{\Gamma(2-\frac\alpha2)}-\frac{\Gamma(n+1+\frac\alpha2)}{\Gamma(n+2-\frac\alpha2)}\bigg)\Big(w^{n+1}-\overline{w}^{n+1}\Big).
\end{eqnarray*}
We shall later establish  the identity \eqref{theta} which gives here
\begin{eqnarray}\label{l1}
C_\alpha\,\mathcal{L}_1\big(h_1\big)(w)&=&  \frac{i}{2}\sum_{n\geq 1}a_n\big(n+1\big)\Theta_{n+1}\Big(w^{n+1}-\overline{w}^{n+1}\Big).
\end{eqnarray}
To compute the term $\mathcal{L}_2\big(h_1,h_2\big)(w)$ we first  split it into two parts as follows,
\begin{eqnarray}\label{ll2}
\mathcal{L}_2\big(h_1,h_2\big)(w)&=&  \textnormal{Im}\Big\{\hbox{I}_1(w)+\hbox{I}_2(w)\Big\},
\end{eqnarray}
with
$$
\hbox{I}_1(w)\triangleq \frac{b\overline{h_1'(w)}}{w}\fint_\mathbb{T}\frac{d\tau}{\vert w-b\tau\vert^\alpha}- \frac{\alpha b}{2w}\fint_\mathbb{T}\frac{(w-b\tau)\big(\overline{h_1(w)}-\overline{h_2(\tau)}\big)}{\vert w-b\tau\vert^{\alpha+2}}d\tau
$$
and
$$
\hbox{I}_2(w)\triangleq \frac{1}{w}\fint_\mathbb{T}\frac{h_2'(\tau)}{\vert w-b\tau\vert^\alpha}d\tau-\frac{\alpha b}{2w}\fint_\mathbb{T}\frac{(\overline{w}-b\overline{\tau})\big(h_1(w)-h_2(\tau)\big)}{\vert w-b\tau\vert^{\alpha+2}}d\tau.
$$
By using the Fourier expansions of  $h_1$ and $h_2$ we get,
$$
\hbox{I}_1(w) =-b\sum_{n\geq 1}na_nw^{n}\fint_\mathbb{T}\frac{d\tau}{\vert w-b\tau\vert^\alpha} -\frac{\alpha }{2}b\overline{w}\sum_{n\geq 1}\fint_\mathbb{T}\frac{(w-b\tau)(a_nw^{n}-c_n\tau^{n})}{\vert w-b\tau\vert^{\alpha+2}}d\tau.
$$
Then by applying the formula \eqref{In} with $n=1$ to the first term  and the formula \eqref{Jn} to the second term we find
\begin{alignat}{2}\label{I1}
\hbox{I}_1(w) =& -\frac{\alpha}{2}b^2\sum_{n\geq 1}na_nw^{n+1} && F\Big(\frac\alpha2,1+\frac\alpha2;2;b^2\Big)\notag\\ &-\frac{\alpha }{2}b^2\sum_{n\geq 0}w^{n+1}\Bigg[  a_n&&\Big(1+\frac\alpha2\Big)F\Big(\frac\alpha2,2+\frac\alpha2;2;b^2\Big)\notag\\  &  &&- c_nb^{n}\frac{(1+\frac\alpha2)_{n+1}}{(n+1)!}F\Big(\frac\alpha2,n+2+\frac\alpha2;n+2;b^2\Big)\Bigg]\notag\\\triangleq&  -\frac{\alpha}{2}b^2\sum_{n\geq 0}\big(\,a_n\gamma_n+&&c_n\delta_n\big)w^{n+1},
\end{alignat}
where we have used in the last equality  the notation,
$$
\gamma_n\triangleq\Big(1+\frac\alpha2\Big)F\Big(\frac\alpha2,2+\frac\alpha2;2;b^2\Big)+nF\Big(\frac\alpha2,1+\frac\alpha2;2;b^2\Big)
$$
and
$$
\delta_n=-b^{n}\frac{(1+\frac\alpha2)_{n+1}}{(n+1)!}F\Big(\frac\alpha2,n+2+\frac\alpha2;n+2;b^2\Big).
$$
 Similarly, the second term $\hbox{I}_2(w)$ may be written in the form,
\begin{eqnarray*}
\hbox{I}_2(w)  &=&-{\overline{w}}\sum_{n\geq 0}nc_n\fint_\mathbb{T}\frac{\overline{\tau}^{n+1}d\tau}{\vert w-b\tau\vert^\alpha}-\frac{\alpha}{2}b\overline{w}\sum_{n\geq 0}\fint_\mathbb{T}\frac{(\overline{w}-b\overline{\tau})(a_n\overline{w}^{n}-c_n\overline{\tau}^{n})}{\vert w-b\tau\vert^{\alpha+2}}d\tau.
\end{eqnarray*}
Using the elementary fact 
\begin{equation}\label{fct}
\fint_\mathbb{T}\frac{\overline{\tau}^{n+1}d\tau}{\vert w-b\tau\vert^\alpha}=\overline{\fint_\mathbb{T}\frac{{\tau}^{n-1}d\tau}{\vert w-b\tau\vert^\alpha}}
\end{equation}
combined with the formulas  \eqref{In} and \eqref{Kn} we obtain
\begin{alignat}{2}\label{i2}
\hbox{I}_2(w) =&-\sum_{n\geq 1}nc_n\frac{(\frac\alpha2)_n}{n!}&&\overline{w}^{n+1}b^{n}F\Big(\frac\alpha2,n+\frac\alpha2;n+1;b^2\Big)\notag\\ &-\frac{\alpha }{2}\sum_{n\geq 1}\overline{w}^{n+1}\bigg[&&a_nb^2\frac\alpha2F\Big(\frac\alpha2+1,\frac\alpha2+1;2;b^2\Big)-c_nb^{n}\frac{(1+\frac\alpha2)_{n-1}}{(n-1)!}F\Big(\frac\alpha2,n+\frac\alpha2;n;b^2\Big)\bigg]\notag\\ =&-\frac{\alpha }{2}\sum_{n\geq 1}\overline{w}^{n+1}\Bigg[&&a_nb^2\frac\alpha2F(\frac\alpha2+1,\frac\alpha2+1;2;b^2)\notag\\ & &&+c_n\frac{(1+\frac\alpha2)_{n-1}}{(n-1)!}b^{n}\bigg(F\Big(\frac\alpha2,n+\frac\alpha2;n+1;b^2\Big)-F\Big(\frac\alpha2,n+\frac\alpha2;n;b^2\Big)\bigg)\Bigg].\end{alignat}
Thus owing to the formula \eqref{f0} applied with $a=\frac\alpha2, b=\frac\alpha2+n$ and $c=n$ one gets
\begin{eqnarray}\label{i22}
\hbox{I}_2(w)&=&
-\frac{\alpha }{2}b^2\sum_{n\geq 1}\Big(\alpha_na_n+\beta_nc_n\Big)\overline{w}^{n+1},
\end{eqnarray}
where $\alpha_n$ and $\beta_n$ are defined by,
$$
\alpha_n\triangleq \frac\alpha2F\Big(\frac\alpha2+1,\frac\alpha2+1;2;b^2\Big)
$$
and
$$
\beta_n\triangleq -\frac{(\frac\alpha2)_{n+1}}{(n+1)!}b^{n}F\Big(\frac\alpha2+1,n+1+\frac\alpha2;n+2;b^2\Big).
$$
Inserting   the identities \eqref{i22} and \eqref{I1} into \eqref{ll2} we find
\begin{eqnarray*}
\mathcal{L}_2\big(h_1,h_2\big)(w)&=&-\frac\alpha2 b^2\textnormal{Im}\bigg\{\sum_{n\geq 1}\big(a_n\gamma_n+c_n\delta_n\big)w^{n+1}+\sum_{n\geq 1}\big(a_n\alpha_n+c_n\beta_n\big)\overline{w}^{n+1} \bigg\}\notag\\ &=&i \frac\alpha4 b^2\sum_{n\geq 1}\Big(w^{n+1}-\overline{w}^{n+1}\Big)\Big[a_n\big(\gamma_n-\alpha_n\big)+c_n\big(\delta_n-\beta_n\big)\Big].
\end{eqnarray*}

To compute $\gamma_n-\alpha_n$ we shall use the formula \eqref{f34}  which gives 
\begin{eqnarray*}
\gamma_n-\alpha_n &=& \Big(1+\frac\alpha2\Big)F\Big(\frac\alpha2,2+\frac\alpha2;2;b^2\Big)-\frac\alpha2F\Big(\frac\alpha2+1,\frac\alpha2+1;2;b^2\Big)+n\,F\Big(\frac\alpha2,1+\frac\alpha2;2;b^2\Big)\\ &=&(n+1)\, F\Big(\frac\alpha2,1+\frac\alpha2;2;b^2\Big).
\end{eqnarray*}
Similarly we have
\begin{align*}
\delta_n-\beta_n = 
-b^{n}\frac{(1+\frac\alpha2)_{n}}{(n+1)!}\bigg[&\Big(n+1+\frac\alpha2\Big)F\Big(\frac\alpha2,n+2+\frac\alpha2;n+2;b^2\Big)\\ &-\frac\alpha2F\Big(\frac\alpha2+1,n+1+\frac\alpha2;n+2;b^2\Big)\bigg]\end{align*}
and therefore using once again the  identity  \eqref{f34} we find
$$
\delta_n-\beta_n=-b^{n}\frac{(1+\frac\alpha2)_{n}}{n!}F\Big(\frac\alpha2,n+1+\frac\alpha2,n+2,b^2\Big).
$$
Consequently the Fourier expansion  of $\mathcal{L}_2\big(h_1,h_2\big)$ is described by the formula 
\begin{align*}
\mathcal{L}_2\big(h_1,h_2\big)(w)=\frac{i}{2}b^2\sum_{n\geq 1}(n+1)\bigg[&a_n\frac\alpha2 F\Big(\frac\alpha2,1+\frac\alpha2,2,b^2\Big)\\ &-c_nb^{n}\frac{(\frac\alpha2)_{n+1}}{(n+1)!}F\Big(\frac\alpha2,n+1+\frac\alpha2,n+2,b^2\Big)\bigg]\Big(w^{n+1}-\overline{w}^{n+1}\Big).\notag
\end{align*}
By virtue of  the identity \eqref{lambdaf} we get
\begin{align}\label{l22}
C_\alpha\,\mathcal{L}_2\big(h_1,h_2\big)(w)=\frac{i}{2}\sum_{n\geq 1}(n+1)\bigg[a_nb^2\Lambda_1(b)-c_nb^2\Lambda_{n+1}(b)\bigg]\Big(w^{n+1}-\overline{w}^{n+1}\Big).
\end{align}
Finally inserting  \eqref{l0}, \eqref{l1} and \eqref{l22} into \eqref{df001} we find

\begin{align}\label{df1f}
\nonumber DG_1(\Omega,0,0)(h_1,h_2)(w)=\frac{i}{2}\sum_{n\geq 1}&\big(n+1\big) \Big[ a_n\Big(\Omega-\Theta_{n+1}+b^2\Lambda_1(b)\Big)-c_nb^2 \Lambda_{n+1}(b)\Big]\\ &\times \Big(w^{n+1}-\overline{w}^{n+1}\Big) .
\end{align}

Next, we shall move to the computations of $DG_2(\Omega,0,0)(h_1,h_2)$ defined in \eqref{df002}.  The first two terms are done in the preceding step and therefore it remains just to compute 
the  term $\mathcal{L}_3\big(h_1,h_2\big)$. It may be splitted into two terms,
\begin{eqnarray}\label{l2}
\mathcal{L}_3\big(h_1,h_2\big)(w)=\textnormal{Im}\Big\{\tilde{\hbox{I}}_1(w)+\tilde{\hbox{I}}_2(w)\Big\},
\end{eqnarray}
with
$$
\tilde{\hbox{I}}_1(w)\triangleq \frac{\overline{h_2'(w)}}{bw}\fint_\mathbb{T}\frac{d\tau}{\vert bw-\tau\vert^\alpha}-\frac{\alpha }{2w}\fint_\mathbb{T}\frac{(bw-\tau)\big(\overline{h_2(w)}-\overline{h_1(\tau)}\big)}{\vert bw-\tau\vert^{\alpha+2}}d\tau
$$
and
$$
\tilde{\hbox{I}}_2(w)\triangleq \frac{1}{w}\fint_\mathbb{T}\frac{h_1'(\tau)d\tau}{\vert bw-\tau\vert^\alpha}-\frac{\alpha }{2w}\fint_\mathbb{T}\frac{(b\overline{w}-\overline{\tau})\big(h_2(w)-h_1(\tau)\big)}{\vert bw-\tau\vert^{\alpha+2}}d\tau.
$$
To compute the first term $\tilde{\hbox{I}}_1(w)$ we write
\begin{eqnarray*}
\tilde{\hbox{I}}_1(w)&=& -\sum_{n\geq 1}n\frac{c_n}{b}w^n\fint_\mathbb{T}\frac{d\tau}{\vert bw-\tau\vert^\alpha}-\frac{\alpha }{2w}\sum_{n\geq 1}\fint_\mathbb{T}\frac{(bw-\tau)\big(c_nw^n-a_n\tau^n\big)}{\vert bw-\tau\vert^{\alpha+2}}d\tau.
\end{eqnarray*}
Thus applying successively the formula \eqref{In} to the first term with $n=1$  and the \mbox{formula \eqref{Ln}} to the second one we get
\begin{alignat}{2}\label{tI1}
\tilde{\hbox{I}}_1(w) =&-\frac\alpha2\sum_{n\geq 1}\,n\,{c_n}\, &&w^{n+1}F\Big(\frac\alpha2,1+\frac\alpha2;2;b^2\Big)\notag\\ &+\frac\alpha2\sum_{n\geq 1}w^{n+1}&&\bigg[c_nb^2\frac\alpha4\big(1+\frac\alpha2\big)F\Big(1+\frac\alpha2,2+\frac\alpha2;3;b^2\Big)\notag\\ & &&- a_nb^{n+2}\frac{(\frac\alpha2)_{n+2}}{(n+2)!}F\Big(1+\frac\alpha2,n+2+\frac\alpha2;n+3;b^2\Big)\bigg]\notag \\ =& -\frac{\alpha }{2}\,\sum_{n\geq 1}\big(\,a_n&&\tilde{\gamma}_n+c_n\,\tilde{\delta}_n\,\big){w}^{n+1}\end{alignat}
with
$$
\tilde{\gamma}_n\triangleq b^{n+2}\frac{(\frac\alpha2)_{n+2}}{(n+2)!}F\Big(1+\frac\alpha2,n+2+\frac\alpha2;n+3;b^2\Big)
$$
and
$$
\tilde{\delta}_n\triangleq nF\Big(\frac\alpha2,1+\frac\alpha2;2;b^2\Big)-b^2\frac\alpha4\Big(1+\frac\alpha2\Big)F\Big(1+\frac\alpha2,2+\frac\alpha2;3;b^2\Big).
$$ 
As to  the term $\tilde{\hbox{I}}_2(w)$ we write
\begin{eqnarray*}
\tilde{\hbox{I}}_2(w)&=&-\frac{1}{w}\sum_{n\geq 1}na_n\fint_\mathbb{T}\frac{\overline{\tau}^{n+1}d\tau}{\vert bw-\tau\vert^\alpha}-\frac{\alpha }{2w}\sum_{n\geq 1}\fint_\mathbb{T}\frac{(b\overline{w}-\overline{\tau})\big(c_n\overline{w}^{n}-a_n\overline{\tau}^{n}\big)d\tau}{\vert bw-\tau\vert^{\alpha+2}}\cdot
\end{eqnarray*}
Owing to \eqref{fct} and using the  formulae \eqref{In} and \eqref{Mn}, one gets
\begin{align}\label{tI2}
\tilde{\hbox{I}}_2(w) =&-\sum_{n\geq 1}na_n\overline{w}^{n+1}b^n\frac{(\frac\alpha2)_n}{n!}F\Big(\frac\alpha2,n+\frac\alpha2;n+1;b^2\Big)\notag\\ &-\frac\alpha2\sum_{n\geq 1} \overline{w}^{n+1}\bigg[-c_nF\Big(\frac\alpha2,\frac\alpha2+1;1;b^2\Big)+a_nb^{n}\frac{(\frac\alpha2)_{n}}{n!}F\Big(\frac\alpha2+1,n+\frac\alpha2;n+1;b^2\Big)\bigg]\notag\\ =&- \frac{\alpha }{2}\sum_{n\geq 1}\big(a_n\tilde{\alpha}_n+c_n\tilde{\beta}_n\big)\overline{w}^{n+1},
\end{align}
with
$$
\tilde{\alpha}_n\triangleq\frac{(1+\frac\alpha2)_{n-1}}{n!}b^{n}\bigg[n\,F\Big(\frac\alpha2,n+\frac\alpha2;n+1;b^2\Big)+\frac\alpha2F\Big(1+\frac\alpha2,n+\frac\alpha2;n+1;b^2\Big)\bigg] $$
and
$$
\tilde{\beta}_n\triangleq-F\Big(\frac\alpha2,\frac\alpha2+1;1;b^2\Big).
$$
Now  inserting   the  identities \eqref{tI1} and \eqref{tI2} into \eqref{l2} we find
\begin{eqnarray}\label{df}
\mathcal{L}_3\big(h_1,h_2\big)(w)&=&-\frac\alpha2 \textnormal{Im}\bigg\{\sum_{n\geq 1}\big(a_n\tilde{\gamma}_n+c_n\tilde{\delta}_n\big)w^{n+1}+\sum_{n\geq 1}\big(a_n\tilde{\alpha}_n+c_n\tilde{\beta}_n\big)\overline{w}^{n+1} \bigg\}\notag\\ &=&i\frac\alpha4 \sum_{n\geq 1}\Big[a_n\big(\tilde{\gamma}_n-\tilde{\alpha}_n\big)+c_n\big(\tilde{\delta}_n-\tilde{\beta}_n\big)\Big]\Big(w^{n+1}-\overline{w}^{n+1}\Big).
\end{eqnarray}
From  the foregoing expressions for $\tilde{\gamma}_n$ and $\tilde{\alpha}_n$ one may write,
\begin{alignat*}{2}
\tilde{\gamma}_n-\tilde{\alpha}_n &= b^{n+2}\frac{(\frac\alpha2)_{n+2}}{(n+2)!} &&F\Big(1+\frac\alpha2,n+2+\frac\alpha2;n+3;b^2\Big)\\ &-\frac{(1+\frac\alpha2)_{n-1}}{n!}b^{n}&&\bigg[nF\Big(\frac\alpha2,n+\frac\alpha2;n+1;b^2\Big)+\frac\alpha2F\Big(1+\frac\alpha2,n+\frac\alpha2;n+1;b^2\Big)\bigg]\\ &=\,\,\frac{(1+\frac\alpha2)_{n-1}}{n!}b^{n}&&\bigg[b^2\frac{\frac\alpha2\big(\frac\alpha2+n\big)\big(\frac\alpha2+1+n\big)}{(n+2)(n+1)}F\Big(1+\frac\alpha2,n+2+\frac\alpha2;n+3;b^2\Big)\\ & &&- nF\Big(\frac\alpha2,n+\frac\alpha2;n+1;b^2\Big)-\frac\alpha2F\Big(1+\frac\alpha2,n+\frac\alpha2;n+1;b^2\Big)\bigg].
\end{alignat*}
Hence using   the formula \eqref{f0} with $a=\frac\alpha2, b=n+1+\frac\alpha2$ and $c=n+1$ yields 
\begin{align*}
\tilde{\gamma}_n-\tilde{\alpha}_n  =\,\,\frac{(1+\frac\alpha2)_{n-1}}{n!}b^{n}\bigg[&\big(\frac\alpha2+n\big)F\Big(\frac\alpha2,n+1+\frac\alpha2;n+1;b^2\Big)\\ &-\big(\frac\alpha2+n\big)F\Big(\frac\alpha2,n+1+\frac\alpha2;n+2;b^2\Big)\\ &-\,\, nF\Big(\frac\alpha2,n+\frac\alpha2;n+1;b^2\Big)-\frac\alpha2F\Big(1+\frac\alpha2,n+\frac\alpha2;n+1;b^2\Big)\bigg].
\end{align*}
Applying the formula \eqref{f34} with $a=\frac\alpha2, b=\frac\alpha2+n$ and $c=n+1$  we get
\begin{eqnarray*}
\big(\frac\alpha2+n\big)F\Big(\frac\alpha2,n+1+\frac\alpha2;n+1;b^2\Big)-\frac\alpha2F\Big(1+\frac\alpha2,n+\frac\alpha2;n+1;b^2\Big)=nF\Big(\frac\alpha2,n+\frac\alpha2;n+1;b^2\Big).
\end{eqnarray*}
This implies,
\begin{eqnarray*}
\tilde{\gamma}_n-\tilde{\alpha}_n =-\frac{(1+\frac\alpha2)_{n}}{n!}b^{n}F\Big(\frac\alpha2,n+1+\frac\alpha2;n+2;b^2\Big).
\end{eqnarray*}
Using  the expressions of $\tilde{\delta}_n$ and $\tilde{\beta}_n$ combined with the identity \eqref{f0} applied with $a=\frac\alpha2, b=1+\frac\alpha2$ and $c=1$ we find the compact formula
\begin{alignat*}{2}
\tilde{\delta}_n-\tilde{\beta}_n &= \, && nF\Big(\frac\alpha2,1+\frac\alpha2;2;b^2\Big)-b^2\frac\alpha4\big(1+\frac\alpha2\big)F\Big(1+\frac\alpha2,2+\frac\alpha2;3;b^2\Big)\\ & &&+F\Big(\frac\alpha2,\frac\alpha2+1;1;b^2\Big)\\  &=\, && (n+1)F\Big(\frac\alpha2,1+\frac\alpha2;2;b^2\Big).\end{alignat*}
Putting together the preceding identities allows to write
\begin{align*}
\mathcal{L}_3\big(h_1,h_2(w)\big)&=i\frac\alpha4 \sum_{n\geq 1}(n+1)\Big(w^{n+1}-\overline{w}^{n+1}\Big)\\
&\times\bigg[c_n\,F\Big(\frac\alpha2,1+\frac\alpha2;2;b^2\Big)- a_nb^{n}\frac{(1+\frac\alpha2)_{n}}{(n+1)!}F\Big(\frac\alpha2,n+1+\frac\alpha2,n+2,b^2\Big)\bigg].
\end{align*}
According to  the identity \eqref{lambdaf} we get
\begin{align*}
C_\alpha\,\mathcal{L}_3\big(h_1,h_2(w)\big)= \frac{i}{2} \sum_{n\geq 1}(n+1)\Big(c_n\Lambda_1(b)-a_n\Lambda_{n+1}(b)\Big)\Big(w^{n+1}-\overline{w}^{n+1}\Big).
\end{align*}
Finaly, inserting the  preceding identity and the expressions \eqref{l1} and \eqref{l2} into \eqref{df002} one can readily verify that
\begin{align}\label{df2f}
\nonumber DG_2(\Omega,0,0)(h_1,h_2)(w)=  \frac{i}{2}\sum_{n\geq 1}&\big(n+1\big)\bigg[a_nb\Lambda_{n+1}(b)+ c_n\Big(b\Omega+b^{1-\alpha}\Theta_{n+1}-b\Lambda_{1}(b)\Big)\bigg]\\ &\times \Big(w^{n+1}-\overline{w}^{n+1}\Big).
\end{align}
This concludes the proof of the Lemma \ref{lem0}.
\end{proof}
 \subsection{Asymptotic behavior}
 We shall  collect some useful properties on the asymptotic behavior of the sequences  $(\Theta_n)_n$ and $(\Lambda_n)_n$  introduced in Lemma \ref{lem0}. The study is  done with respect to the parameters  $\alpha$ and $n$. This is summarized in the next lemma.
 \begin{lemma} \label{lem1} Let $\alpha\in (0,1)$ and $b\in(0,1)$. Then the following results hold true.
 \begin{enumerate}
\item For all $n\in\NN^*$, $\Theta_n\geq0$, $\Lambda_n\geq 0$. Moreover, $b\mapsto \Lambda_n(b)$ is strictly increasing, $n\mapsto\Theta_n$  is strictly increasing and   $n\mapsto \Lambda_n(b)$ is strictly decreasing.
\item Let $n\geq 2$, then
$$
\lim_{\alpha\to 0} \Theta_n =\frac{n-1}{2n},\quad \lim_{\alpha\to 0} \Lambda_n(b) =\frac{b^{n-1}}{2n}
$$
and
$$
\lim_{\alpha\to 1} \Theta_n =\frac2\pi\sum_{k=1}^{n-1}\frac{1}{2k+1},\quad \lim_{\alpha\to 1} \Lambda_n(b) =\frac1b\int_0^{+\infty}J_n(bt)J_n(t)dt.
$$

\item  For  $n$ sufficiently large,
 \begin{equation}\label{As3}
\Lambda_n(b)=O(b^{n-1}), \quad \lim_{n\to\infty}\Lambda_n(b)=0,
\end{equation}
\begin{equation}\label{As1}
\Theta_n=\Lambda_1(1)-\big(1-{\alpha}/{2}\big)\Lambda_1(1)\frac{e^{\alpha\gamma+c_\alpha}}{n^{1-\alpha}}+O\Big(\frac{1}{n^{2-\alpha}}\Big), \quad \lim_{n\to\infty}\Theta_n=\Lambda_1(1).
\end{equation}
\item The determinant of the matrix $M^\alpha_n$ introduced in \eqref{matrix} satisfies
\begin{eqnarray}\label{Asdet}
\textnormal{det}(M^\alpha_{n})&=&\mu+\frac{\nu}{n^{1-\alpha}}+O\Big(\frac{1}{n^{2-\alpha}}\Big)\quad \textnormal{and}\quad \lim_{n\to +\infty}\textnormal{det}(M_{n})=\mu,
\end{eqnarray}
with
$$
\mu\triangleq \Big(\Omega-\Lambda_{1}(1)+b^2\Lambda_{1}(b)\Big)\Big(b\Omega+b^{1-\alpha}\Lambda_{1}(1)-b\Lambda_1(b)\Big)
$$
and
$$
\nu\triangleq\big(1-{\alpha}/{2}\big)\Lambda_1(1)\Big(2b^{1-\alpha}\Lambda_1(1)+(b-b^{1-\alpha})\Omega-b(1+b^{2-\alpha})\Lambda_1(b)\Big){e^{\alpha\gamma+c_\alpha}},
$$
with $\gamma$ denotes Euler constant,  $c_\alpha$ is the sum of the series 
$$
c_\alpha\triangleq\sum_{m=1}^\infty\frac{\alpha^{2m+1}}{2^{2m-1}(2m+1)}\zeta(2m+1).
$$
and $s\mapsto \zeta(s)$ is the Riemann zeta function.
 \end{enumerate}
 \end{lemma}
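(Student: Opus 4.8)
The plan is to reduce the entire lemma to one closed-form evaluation of $\Lambda_n$. Applying the Sonine--Schafheitlin formula \eqref{sonine} to the integral \eqref{Pi} with $\mu=\nu=n$, larger argument $a=1$, smaller argument $b$, and $\lambda=1-\alpha$, the parameters collapse to $n+\tfrac\alpha2,\ \tfrac\alpha2,\ n+1$ and one obtains
\begin{equation}\label{lambdaf}
\Lambda_n(b)=\frac{b^{n-1}\,\Gamma\big(n+\tfrac\alpha2\big)}{2^{1-\alpha}\,\Gamma(n+1)\,\Gamma\big(1-\tfrac\alpha2\big)}\,F\Big(\tfrac\alpha2,n+\tfrac\alpha2;n+1;b^2\Big),
\end{equation}
the identity invoked in the proof of Lemma~\ref{lem0}. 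Next I would feed \eqref{lambdaf} into the Euler integral representation \eqref{integ} (legitimate since $n+\tfrac\alpha2>0$ and $(n+1)-(n+\tfrac\alpha2)=1-\tfrac\alpha2>0$); the gamma prefactors cancel and yield the single workhorse formula
\begin{equation}\label{intrep}
\Lambda_n(b)=\frac{b^{n-1}}{2^{1-\alpha}\,\Gamma^2\big(1-\tfrac\alpha2\big)}\int_0^1\frac{x^{\,n-1+\frac\alpha2}}{\big[(1-x)(1-b^2x)\big]^{\frac\alpha2}}\,dx,
\end{equation}
valid for all $0<b\le1$ because $\alpha<1$ keeps both endpoint singularities integrable. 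Specialising at $b=1$ and using Gauss's formula \eqref{id1} (applicable since $1-\alpha>0$) then gives the companion identity
\begin{equation}\label{theta}
\Theta_n=\frac{\Gamma(1-\alpha)}{2^{1-\alpha}\Gamma^2\big(1-\tfrac\alpha2\big)}\left[\frac{\Gamma\big(1+\tfrac\alpha2\big)}{\Gamma\big(2-\tfrac\alpha2\big)}-\frac{\Gamma\big(n+\tfrac\alpha2\big)}{\Gamma\big(n+1-\tfrac\alpha2\big)}\right].
\end{equation}
From here every assertion becomes an elementary estimate on \eqref{intrep}--\eqref{theta}.

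For part~(1), positivity of $\Lambda_n$ is immediate from \eqref{intrep}. Strict monotonicity in $n$ I would read off the ratio $\Lambda_{n+1}(b)/\Lambda_n(b)=b\,\langle x\rangle_n$, where $\langle x\rangle_n\in(0,1)$ is the mean of $x$ against the probability measure proportional to $x^{\,n-1+\frac\alpha2}[(1-x)(1-b^2x)]^{-\frac\alpha2}\,dx$ on $(0,1)$; since $b\le1$ and $0<x<1$ this ratio is $<1$, so $n\mapsto\Lambda_n(b)$ is strictly decreasing. The same computation at $b=1$ gives strict decrease of $\Lambda_n(1)$, whence $\Theta_n=\Lambda_1(1)-\Lambda_n(1)$ is strictly increasing and nonnegative with $\Theta_1=0$. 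Monotonicity in $b$ follows because both $b^{n-1}$ and the integrand in \eqref{intrep} are nondecreasing in $b$, strictly on $(0,1)$.

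For parts~(2) and~(3) I would argue directly on \eqref{intrep} and \eqref{theta}. The two limits of $\Lambda_n(b)$ with $b<1$ follow by dominated convergence in \eqref{intrep}, giving $\tfrac{b^{n-1}}{2n}$ as $\alpha\to0$ and $\tfrac1b\int_0^\infty J_n(bt)J_n(t)\,dt$ as $\alpha\to1$. The delicate point is $\Theta_n$ as $\alpha\to1$, where each $\Lambda_n(1)$ in \eqref{theta} blows up: here I would telescope $1-x^{n-1}=(1-x)\sum_{k=0}^{n-2}x^k$ inside
\[
\Theta_n=\frac{1}{2^{1-\alpha}\Gamma^2\big(1-\tfrac\alpha2\big)}\int_0^1\frac{x^{\frac\alpha2}\big(1-x^{n-1}\big)}{(1-x)^{\alpha}}\,dx=\frac{\Gamma(2-\alpha)}{2^{1-\alpha}\Gamma^2\big(1-\tfrac\alpha2\big)}\sum_{k=0}^{n-2}\frac{\Gamma\big(k+1+\tfrac\alpha2\big)}{\Gamma\big(k+3-\tfrac\alpha2\big)},
\]
a finite sum of Beta values that is regular at both endpoints and reproduces $\tfrac{n-1}{2n}$ (the sum telescopes) and $\tfrac2\pi\sum_{k=1}^{n-1}\tfrac1{2k+1}$ (using $\Gamma(k+\tfrac32)/\Gamma(k+\tfrac52)=(k+\tfrac32)^{-1}$). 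The decay $\Lambda_n(b)=O(b^{n-1})$ and $\Lambda_n(b)\to0$ read off \eqref{intrep} since the integral stays bounded in $n$. For the sharp expansion \eqref{As1} I would apply the ratio asymptotics $\Gamma(n+\tfrac\alpha2)/\Gamma(n+1-\tfrac\alpha2)=n^{\alpha-1}\big(1+O(n^{-2})\big)$ in \eqref{theta} (the $1/n$ term drops because $\tfrac\alpha2+(1-\tfrac\alpha2)-1=0$), which pins the leading constant of $\Lambda_n(1)$ to $\Gamma(1-\alpha)/(2^{1-\alpha}\Gamma^2(1-\tfrac\alpha2))$; rewriting this as $(1-\tfrac\alpha2)\Lambda_1(1)\,\Gamma(1-\tfrac\alpha2)/\Gamma(1+\tfrac\alpha2)$ identifies the claimed factor $e^{\alpha\gamma+c_\alpha}$ as $\Gamma(1-\tfrac\alpha2)/\Gamma(1+\tfrac\alpha2)$, and expanding the two logarithms by the Weierstrass series $\log\Gamma(1+z)=-\gamma z+\sum_{k\ge2}\tfrac{(-1)^k\zeta(k)}{k}z^k$ leaves exactly the odd powers, i.e. $\alpha\gamma+c_\alpha$.

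Finally, part~(4) is a direct substitution into the determinant \eqref{cdet}. Since $\Lambda_n(b)=O(b^{n-1})$, the term $b^3\Lambda_n^2(b)$ is exponentially small and does not affect the $n^{-(1-\alpha)}$ order, so I would write $\Theta_n=\Lambda_1(1)-\varepsilon_n$ with $\varepsilon_n=(1-\tfrac\alpha2)\Lambda_1(1)e^{\alpha\gamma+c_\alpha}n^{-(1-\alpha)}+O(n^{-(2-\alpha)})$, insert this into the product form of $\det(M^\alpha_n)$, and expand to first order. The zeroth order gives $\mu$, and the coefficient of $\varepsilon_n$ is $Q_0-b^{1-\alpha}P_0$, with $P_0=\Omega-\Lambda_1(1)+b^2\Lambda_1(b)$ and $Q_0=b\Omega+b^{1-\alpha}\Lambda_1(1)-b\Lambda_1(b)$ the two diagonal limits; a one-line simplification of $(Q_0-b^{1-\alpha}P_0)\varepsilon_n$ produces exactly $\nu\,n^{-(1-\alpha)}$. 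The only genuinely delicate steps are controlling the $\alpha\to1$ degeneration of $\Theta_n$ through the Beta-sum representation and pinning the constant $e^{\alpha\gamma+c_\alpha}$ via the $\log\Gamma$ series; everything else is bookkeeping on \eqref{intrep}.
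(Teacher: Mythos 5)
Your route to \eqref{lambdaf} and to the Euler-integral form of $\Lambda_n(b)$ is exactly the paper's starting point (formula \eqref{43}), and part (1) is then read off in the same way; your ratio argument $\Lambda_{n+1}(b)/\Lambda_n(b)=b\,\langle x\rangle_n<1$ is a cosmetic variant of the paper's direct monotonicity of the integrand. Where you genuinely diverge is part (2): for $\lim_{\alpha\to1}\Theta_n$ the paper passes through the closed form \eqref{theta}, resolves the $0\cdot\infty$ indeterminacy as a difference of derivatives $\phi_1'(1)-\phi_n'(1)$ with $\phi_n(\alpha)=\Gamma(n+\alpha/2)/\Gamma(n+1-\alpha/2)$, and invokes the digamma identity \eqref{digam}; your telescoping $1-x^{n-1}=(1-x)\sum_{k=0}^{n-2}x^k$ converts $\Theta_n$ into a finite sum of Beta values that is manifestly regular at both $\alpha=0$ and $\alpha=1$ and delivers the two limits at once --- cleaner, and correct. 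Likewise, for \eqref{As1} the paper simply cites \cite{H-H}, whereas you supply the argument: the cancellation $\tfrac\alpha2+(1-\tfrac\alpha2)-1=0$, which kills the $1/n$ term in $\Gamma(n+\tfrac\alpha2)/\Gamma(n+1-\tfrac\alpha2)=n^{\alpha-1}\big(1+O(n^{-2})\big)$, is precisely the point, and your identification $e^{\alpha\gamma+c_\alpha}=\Gamma(1-\tfrac\alpha2)/\Gamma(1+\tfrac\alpha2)$ is the right constant. Part (4) is the same substitution the paper performs.

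Two caveats, both concerning the statement rather than your method. First, expanding $\log\Gamma(1-\tfrac\alpha2)-\log\Gamma(1+\tfrac\alpha2)$ by the series you quote gives $\alpha\gamma+\sum_{m\geq1}\tfrac{\zeta(2m+1)}{(2m+1)2^{2m}}\alpha^{2m+1}$, so the series defining $c_\alpha$ should carry $2^{2m}$ rather than $2^{2m-1}$ (sanity check at $\alpha=1$: $e^{\gamma+c_1}=\Gamma(\tfrac12)/\Gamma(\tfrac32)=2$ forces $c_1=\log2-\gamma\approx0.116$, which is the $2^{2m}$ series); nothing downstream uses the exact value of $c_\alpha$, but you should record the corrected exponent. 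Second, in part (4) the quadratic term $b^{1-\alpha}\varepsilon_n^2$ with $\varepsilon_n=\Lambda_n(1)\sim C\,n^{-(1-\alpha)}$ contributes at order $n^{-2(1-\alpha)}$, which is \emph{not} $O(n^{-(2-\alpha)})$ once $\alpha>0$; the honest remainder in \eqref{Asdet} is $O(n^{-2(1-\alpha)})$. Your "expand to first order" glosses over this exactly as the paper's one-line deduction does, so flag it explicitly if you write the argument up.
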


 \begin{remark}
The assertion $(2)$ from the last lemma shows that  the spectrum  is continuous with respect to $\alpha$. In other words, we have
$$
M_n^0\triangleq\displaystyle{\lim_{\alpha\to 0}M_n^\alpha=\begin{pmatrix}
 \Omega-\dfrac{n-1}{2n}+\dfrac{b^2}{2}& -\dfrac{b^{n+1}}{2n} \\
 \dfrac{b^{n+1}}{2n} & b\big(\Omega+\dfrac{n-1}{2n}-\dfrac{1}{2}\big)
\end{pmatrix}}
$$
Hence, by the change of variable $\lambda=1-2\Omega$ we can see that $M_n^0$ is exactly the same matrix obtained in \cite{H-F-M-V}.
However, for  $\alpha=1$  the dispersion relation  established in \cite{HAC} involves  the following matrix    
\begin{equation*}
\widetilde{M_n}=\begin{pmatrix}
\Omega-\dfrac{2}{\pi} \displaystyle{\sum_{k=1}^{n-1}\frac{1}{2k+1}+b^2\Lambda_1(b)}& -b^3\displaystyle{\Lambda_n(b)} \\
\displaystyle{\frac1b\Lambda_n(b)} &\Omega+ \dfrac{2}{b\pi} \displaystyle{\sum_{k=1}^{n-1}\frac{1}{2k+1}-\Lambda_1(b)}
\end{pmatrix}.
\end{equation*}
This discrepancy with the matrix $M_n^1$ is due to the parametrization used in \cite{HAC} for the interior curve. Indeed, in that paper the perturbation of the interior curve is dilated by $b$. Thus with our parametrization we should multiply the second column of $\widetilde{M_n}$ by $b$ and the matrix $\widetilde{M_n}$ becomes
\begin{equation*}
\begin{pmatrix}
\Omega-\dfrac{2}{\pi} \displaystyle{\sum_{k=1}^{n-1}\frac{1}{2k+1}+b^2\Lambda_1(b)}& -b^4\displaystyle{\Lambda_n(b)} \\
\displaystyle{\frac1b\Lambda_n(b)} &b\Omega+ \dfrac{2}{\pi} \displaystyle{\sum_{k=1}^{n-1}\frac{1}{2k+1}-b\Lambda_1(b)}
\end{pmatrix}.
\end{equation*}
As we can easily see, this matrix has exactly the same determinant of  the matrix $M_n^1$ and therefore we find the same dispersion relations.
 \end{remark}
 \vspace{0,5cm}
 Let us now prove  Lemma \ref{lem1}.
 \begin{proof}
{\bf (1)} To study the sign of $\Lambda_n$ we shall make use of  Sonine-Schafheitlin's formula \eqref{sonine} leading to the identity
\begin{equation}\label{lambdaf}
\Lambda_n(b)=\frac{\Gamma(\frac\alpha2)}{2^{1-\alpha}\Gamma(1-\frac\alpha2)} \frac{(\frac\alpha2)_{n}}{n!}b^{n-1}F\Big(\frac\alpha2,n+\frac\alpha2,n+1,b^2\Big),
\end{equation}
which is obviously positive for all $\alpha,b\in(0,1)$. \\
 Let us now prove that the mapping $n\mapsto\Lambda_n(b)$ is decreasing. For this end,  we rewrite the hypergeometric series $F$ appearing in right-hand side of \eqref{lambdaf} according to the identity \eqref{integ}, which yields
 \begin{equation*}
F\Big(\frac\alpha2,n+\frac\alpha2,n+1,b^2\Big)=\frac{\Gamma(n+1)}{\Gamma(n+\frac\alpha2)\Gamma(1-\frac\alpha2)}\int_0^1 x^{n-1+\frac\alpha2} (1-x)^{-\frac\alpha2}(1-b^2x)^{-\frac\alpha2}~\mathrm dx.
 \end{equation*}
From the relation \eqref{Poc}  we get
 \begin{eqnarray}\label{43}
\Lambda_n(b)
 &=& \frac{b^{n-1}}{2^{1-\alpha}\Gamma^2(1-\frac\alpha2)}\int_0^1 x^{n-1+\frac\alpha2} (1-x)^{-\frac\alpha2}(1-b^2x)^{-\frac\alpha2}~\mathrm dx,\quad b\in(0,1).
 \end{eqnarray}
Therefore  it is easily seen that   $b\in(0,1)\mapsto \Lambda_n(b)$ is increasing and $n\in\NN^\star\mapsto \Lambda_n(b)$ is decreasing.  This implies in turn  that $n\in\NN^\star\mapsto \Theta_n=\Lambda_1(1)-\Lambda_n(1)$ is increasing and  thus it should be  positive. Notice that these properties can be also proven from the series \mbox{expansion \eqref{lambdaf}.}
\vspace{0,2cm}

{\bf (2)} 
Passing to the limit in the formula defining $\Lambda_n(b)$ when $\alpha$ goes to one yields
$$
\lim_{\alpha\to 1} \Lambda_n(b) =\frac{1}{b}\int_0^{+\infty}J_n(bt)J_n(t)dt.
$$
As to the second limit, we have
\begin{eqnarray*}
\lim_{\alpha\to 0} \Lambda_n(b) =\frac{1}{b}\int_0^{+\infty}\frac{J_n(bt)J_n(t)}{t}dt.
\end{eqnarray*}
Since $b\in(0,1)$  we can use the following identity,
$$
\int_0^{+\infty}\frac{J_n(bt)J_n(t)}{t}dt=\frac{b^n}{2n},
$$
whose proof can be found for example in \cite[p. 405]{Watson}. Consequently,
$$
\lim_{\alpha\to 0} \Lambda_n(b) =\frac{b^{n-1}}{2n}.
$$
 Now to compute the  limits of $\Theta_n$ when  $\alpha$ goes to the values $0$ and $1$ we shall rewrite $\Theta_n$ by using the identity\eqref{lambdaf} in the form
\begin{eqnarray*}
\Theta_n&\triangleq &\Lambda_1(1)-\Lambda_n(1) \\ &=& \frac{(\frac\alpha2)\Gamma(\frac\alpha2)}{2^{1-\alpha}\Gamma(1-\frac\alpha2)} \bigg[F\Big(\frac\alpha2,1+\frac\alpha2,2,1\Big)- \frac{(\frac\alpha2+1)_{n-1}}{n!}F\Big(\frac\alpha2,n+\frac\alpha2,n+1,1\Big)\bigg].
\end{eqnarray*}
This gives in view of the formula \eqref{id1} and \eqref{Poc},
\begin{equation}\label{theta}
\Theta_n= \frac{\Gamma(1-\alpha)}{2^{1-\alpha}\Gamma^2(1-\frac\alpha2)} \bigg(\frac{\Gamma(n+\frac\alpha2)}{\Gamma(2-\frac\alpha2)}-\frac{\Gamma(n+\frac\alpha2)}{\Gamma(n+1-\frac\alpha2)}\bigg).
\end{equation}
This expression coincides with the "eigenvalues" in the simply connected case, see \cite{H-H}. 
It follows that,
\begin{eqnarray*}
\lim_{\alpha\to 0}\Theta_n&=&\frac12\Big(\frac{\Gamma(1)}{\Gamma(2)}-\frac{\Gamma(n)}{\Gamma(n+1)}\Big)\\
&=&\frac12\Big(1-\frac{(n-1)!}{n!}\Big)\\
&=&\frac{n-1}{2n}\cdot
\end{eqnarray*}
We note that these values coincide with the "eigenvalues" for Euler equations in the simply connected case.
To compute the second limit, we shall introduce for a fixed $n$ the function
$$
\phi_n(\alpha)=\frac{\Gamma(n+\alpha/2)}{\Gamma(n+1-\alpha/2)}\cdot
$$
Therefore we obtain according to \eqref{Gamma1}, \eqref{for1} and the relation $\phi_n(1)=1,$
\begin{eqnarray*}
\lim_{\alpha\to 1}\Theta_n&=&\frac{-1}{\Gamma^2(1/2)}\lim_{\alpha\to 1}\big\{(1-\alpha)\Gamma(1-\alpha)\big\}\lim_{\alpha\to 1}\Big\{\frac{\phi_1(\alpha)-\phi_1(1)}{\alpha-1}-\frac{\phi_n(\alpha)-\phi_n(1)}{\alpha-1}\Big\}\\
&=&\frac{-1}{\pi}\Big\{{\phi_1^\prime(1)}-{\phi_n^\prime(1)}\Big\}.
\end{eqnarray*}
By applying the logarithm function to $\phi_n$ and differentiating with respect to $\alpha$ one obtains the relation
$$
2\frac{\phi_n^\prime(\alpha)}{\phi_n(\alpha)}=\digamma(n+\alpha/2)+\digamma(n+1-\alpha/2).
$$
Now using the fact that $\phi_n(1)=1$ combined with the preceding identity and \eqref{digam}, we find
\begin{eqnarray*}
\lim_{\alpha\to 1}\Theta_n
&=&\frac{-1}{\pi}\Big\{{\digamma(3/2)}-{\digamma(n+1/2)}\Big\}\\
&=&\frac{2}{\pi}\sum_{k=1}^{n-1}\frac{1}{2k+1},
\end{eqnarray*}
which is the desired result.
\vspace{0,5cm}

{\bf (3)-(4)} The asymptotic behavior of $\Lambda_n$ may be easily  obtained from the integral formula \eqref{43}. The proof of \eqref{As1} was done in details in \cite{H-H}. Finally, by combining \eqref{As1}, \eqref{As3} and the expression of $\textnormal{det}(M^\alpha_n)$ given by \eqref{cdet} one can deduce the identity \eqref{Asdet}.  
 \end{proof}

\subsection{Monotonicity of the eigenvalues}\label{subsec12}
In this section we shall discuss some important properties  concerning the monotonicity of the eigenvalues associated to the matrix $M^\alpha_{n}$ already seen  in Lemma \ref{lem0}. This will be crucial in the study of the kernel of the linearized operator $DG(\Omega,0,0)$. Recall that 
\begin{equation*}
M^\alpha_{n}=\begin{pmatrix}
 \Omega-\Theta_{n}+b^2\Lambda_{1}(b) & -b^2\Lambda_{n}(b) \\
  b\Lambda_n(b) & b\Omega+b^{1-\alpha}\Theta_{n}-b\Lambda_1(b)
\end{pmatrix}.
\end{equation*}
The determinant of this matrix given by \eqref{cdet} is a second order polynomial on the variable $\Omega$ and therefore it has two roots depending on all the parameters $n,b$ and $\alpha.$ For our deal it is important to formulate sufficient conditions to avoid  the eigenvalues  crossing in order to guarantee a one-dimensional kernel which is an essential assumption in Crandall-Rabionwitz's Theorem. In what follows we shall use the variable $\lambda\triangleq1-2\Omega$ instead of $\Omega$    in the spirit of the work of \cite{H-F-M-V}. Thus easy computations show that the determinant \eqref{cdet} takes the form,
 \begin{equation}\label{eqch2}
 \textnormal{det}\big(M_n^\alpha\big)=b\Big(\lambda^2-2C_n\lambda+D_n\big),
 \end{equation}
 with
 \begin{equation*}
 C_n\triangleq 1+\big(b^{-\alpha}-1\big)\Theta_n-\big(1-b^2\big)\Lambda_1(b),
 \end{equation*}
 and
 \begin{align}\label{dn}
 D_n\triangleq  -4b^{-\alpha}\Theta_n^2+2\Big[b^{-\alpha}-1+&2\big(1+b^{2-\alpha}\big)\Lambda_1(b)\Big]\Theta_n-4b^2\Big(\Lambda_1^2(b)-\Lambda_n^2(b)\Big)\\ -& 2\big(1-b^2\big)\Lambda_1(b)+1\notag.
 \end{align}
Note that the quantities $\Lambda_n(b)$ and $\Theta_n$ have been  introduced in Lemma \ref{lem0}. It is easy to check through straightforward computations that the reduced discriminant of the second order polynomial appearing in \eqref{eqch2} is given by 
\begin{eqnarray}\label{discriminant1}
\Delta_n&=& \Big((b^{-\alpha}+1)\Theta_{n}-(1+b^2)\Lambda_{1}(b)\Big)^2-4b^2\Lambda_{n}^2(b).
 \end{eqnarray}
Our result reads as follows.
\begin{proposition}  \label{lem2c2}
There exists $N\geq 2$ such that the following  holds true.

\begin{enumerate}
\item  For all $n>N$ we get $\Delta_n>0$ and the equation  \eqref{eqch2} admits two different  real solutions given by
$$
\lambda^\pm_n\triangleq C_n\pm\sqrt{\Delta_n}.
$$
\item The sequences $(\Delta_n)_{n\geq N}$ and $(\lambda_n^+)_{n\geq N}$ are strictly increasing and $(\lambda_n^-)_{n\geq N}$ is strictly decreasing.
\item For all $m>n >N$  we have
$$
\lambda_m^-<\lambda_n^-<\lambda_n^+<\lambda_m^+.
$$
\end{enumerate}
\end{proposition}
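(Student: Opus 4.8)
The plan is to reduce everything to the two monotonicity facts established in Lemma \ref{lem1}, namely that $n\mapsto\Theta_n$ is strictly increasing and $n\mapsto\Lambda_n(b)$ is strictly decreasing (and positive). Set
$$
P_n\triangleq(b^{-\alpha}+1)\Theta_n-(1+b^2)\Lambda_1(b),\qquad a_n\triangleq P_n-2b\Lambda_n(b),
$$
so that the reduced discriminant \eqref{discriminant1} factors as $\Delta_n=P_n^2-4b^2\Lambda_n^2(b)=a_n\big(P_n+2b\Lambda_n(b)\big)$. Since $b<1$ and $\alpha>0$ we have $b^{-\alpha}+1>0$, so $P_n$ is strictly increasing in $n$; as $\Lambda_n(b)$ is strictly decreasing, $a_n$ is strictly increasing. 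From Lemma \ref{lem1} one has $\Theta_n\to\Lambda_1(1)$ and $\Lambda_n(b)\to0$, hence $a_n\to(b^{-\alpha}+1)\Lambda_1(1)-(1+b^2)\Lambda_1(b)$, which is positive because $\Lambda_1(1)>\Lambda_1(b)$ (strict monotonicity of $b\mapsto\Lambda_1(b)$) and $b^{-\alpha}+1>1+b^2$. Therefore there is a smallest integer $N$ with $a_N\ge0$; this is exactly condition \eqref{condsqgc2} of Remark \ref{rmw1}. For $n>N$ we get $a_n>a_N\ge0$, so $P_n>2b\Lambda_n(b)\ge0$ and $\Delta_n=a_n\big(P_n+2b\Lambda_n(b)\big)>0$; the quadratic \eqref{eqch2} then has the two distinct real roots $\lambda_n^\pm=C_n\pm\sqrt{\Delta_n}$, which proves $(1)$.

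For $(2)$, observe first that $P_n>0$ for every $n\ge N$ (indeed $P_N\ge2b\Lambda_N(b)>0$ since $\Lambda_N(b)>0$), so $P_n^2$ is strictly increasing, while $-4b^2\Lambda_n^2(b)$ is strictly increasing because $\Lambda_n(b)$ is positive and strictly decreasing; hence $\Delta_n=P_n^2-4b^2\Lambda_n^2(b)$ is strictly increasing on $n\ge N$. Next, $C_n=1+(b^{-\alpha}-1)\Theta_n-(1-b^2)\Lambda_1(b)$ is strictly increasing because $b^{-\alpha}-1>0$ and $\Theta_n$ is strictly increasing. Consequently $\lambda_n^+=C_n+\sqrt{\Delta_n}$ is a sum of two strictly increasing sequences, hence strictly increasing.

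The delicate point is the monotonicity of $\lambda_n^-=C_n-\sqrt{\Delta_n}$, where the increase of $C_n$ and the increase of $\sqrt{\Delta_n}$ compete. I would treat $\lambda^-$ as the value of the function
$$
\Psi(\Theta,\Lambda)\triangleq 1+(b^{-\alpha}-1)\Theta-(1-b^2)\Lambda_1(b)-\sqrt{\big((b^{-\alpha}+1)\Theta-(1+b^2)\Lambda_1(b)\big)^2-4b^2\Lambda^2}
$$
on the region $\{P>0,\ \Delta>0\}$, where $P=(b^{-\alpha}+1)\Theta-(1+b^2)\Lambda_1(b)$. A direct differentiation gives
$$
\partial_\Theta\Psi=(b^{-\alpha}-1)-\frac{(b^{-\alpha}+1)P}{\sqrt{\Delta}},\qquad \partial_\Lambda\Psi=\frac{4b^2\Lambda}{\sqrt{\Delta}}.
$$
Since $\sqrt{\Delta}=\sqrt{P^2-4b^2\Lambda^2}<P$ on this region, one has $(b^{-\alpha}+1)P/\sqrt{\Delta}>b^{-\alpha}+1>b^{-\alpha}-1$, so $\partial_\Theta\Psi<0$, while clearly $\partial_\Lambda\Psi>0$ for $\Lambda>0$. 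Now join $(\Theta_n,\Lambda_n(b))$ to $(\Theta_{n+1},\Lambda_{n+1}(b))$ by the straight segment; along it $\Theta$ increases, $\Lambda$ stays positive and decreases, and $\Delta$ stays $\ge\Delta_n>0$ (by the same argument as for the monotonicity of $\Delta_n$), so the segment remains in the admissible region. The chain rule then yields $\tfrac{d}{dt}\Psi<0$, hence $\lambda_{n+1}^-<\lambda_n^-$; the borderline step from $n=N$ is recovered by continuity since $\Delta>0$ on the open part of the segment. This establishes that $(\lambda_n^-)_{n\ge N}$ is strictly decreasing.

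Finally, $(3)$ is an immediate consequence of $(1)$ and $(2)$: for $m>n>N$ the strict decrease of $\lambda^-$ gives $\lambda_m^-<\lambda_n^-$, the inequality $\lambda_n^-<\lambda_n^+$ holds because $\Delta_n>0$, and the strict increase of $\lambda^+$ gives $\lambda_n^+<\lambda_m^+$, so that $\lambda_m^-<\lambda_n^-<\lambda_n^+<\lambda_m^+$. The main obstacle is the third paragraph: unlike the explicit polynomial coefficients of \cite{H-F-M-V,H-H}, here $\Theta_n$ and $\Lambda_n(b)$ are only known through Bessel and hypergeometric integrals, so the non-crossing of the two eigenvalue branches cannot be read off directly and must instead be extracted from the qualitative monotonicity of $\Psi$ in the two independent variables $\Theta$ and $\Lambda$.
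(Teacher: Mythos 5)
Your proof is correct and follows essentially the same route as the paper: the threshold $N$ is defined by the same condition $a_N=(b^{-\alpha}+1)\Theta_N-(1+b^2)\Lambda_1(b)-2b\Lambda_N(b)\ge 0$ (the paper's function $E_b$), the monotonicity of $\Delta_n$ comes from the same splitting into $P_n^2$ and $-4b^2\Lambda_n^2(b)$, and the decrease of $\lambda_n^-$ rests on the same key inequality $P/\sqrt{\Delta}>1$. The only cosmetic difference is that the paper extends $\Theta_n$ and $\Lambda_n(b)$ to a real index $x$ and differentiates in $x$, whereas you interpolate linearly in the $(\Theta,\Lambda)$-plane and read off the signs of $\partial_\Theta\Psi$ and $\partial_\Lambda\Psi$; both devices implement the identical sign analysis.
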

\begin{remark}\label{rmq1}
\begin{enumerate}
\item The number  $N$ in $(2)$ of  the previous proposition is the smallest integer satisfying,
\begin{equation}\label{pos}
\Theta_{N}\geq\frac{1+b^2}{b^{-\alpha}+1}\Lambda_{1}(b)+\frac{2b}{b^{-\alpha}+1}\Lambda_{N}(b). 
\end{equation}
\item In the known cases like the simply connected case with $\alpha\in [0,2[$  or the doubly connected case with $\alpha=0$ 
the analysis is more easier because the dispersion relation is a kind of fractional polynomial but in the present case it is highly nonlinear with respect to the frequencies and the parameters $\alpha$ and $b.$ Therefore the program is achieved with only a sufficient condition on the existence of the eigenvalue and which is given by \eqref{pos}. This condition coincides with that given in \cite{H-F-M-V} for $\alpha=0$. 
%
\end{enumerate}
\end{remark}

\begin{proof}
{\bf (1) }  We intend  to discuss the conditions leading to the positivity of the discriminant defined in \eqref{discriminant1} which ensures in turn that the polynomial \eqref{eqch2} has two real solutions. 
We can see that $(\Delta_n)_{n\geq 2}$ can be extended to a smooth function defined on $[1,+\infty[$ as follows
 \begin{eqnarray*}
 \Delta_x&=&\Big((b^{-\alpha}+1)\Theta_{x}-(1+b^2)\Lambda_{1}(b)\Big)^2-4b^2\Lambda_{x}^2(b).
  \end{eqnarray*}
It is strictly positive if and only if 
\begin{equation}\label{cond1ch2}
E_b(x)\triangleq(b^{-\alpha}+1)\Theta_{x}-(1+b^2)\Lambda_{1}(b)-2b\Lambda_{x}(b)> 0,
\end{equation}
or
\begin{equation*}
F_b(x)\triangleq(b^{-\alpha}+1)\Theta_{x}-(1+b^2)\Lambda_{1}(b)+2b\Lambda_{x}(b)< 0.
\end{equation*}
Remark first that for  $x$ and $b$ verifying the inequality \eqref{cond1ch2} we have $F_b(x)>0$. On other hand,  from  Lemma \ref{lem1} one has :
for all  $b\in(0,1)$  the mapping $x\mapsto E_b(x)$ is strictly increasing, continuous and satisfying  
\begin{eqnarray*}
\lim_{x\to+\infty} E_b(x)&=&(b^{-\alpha}+1)\Lambda_1(1)-(1+b^2)\Lambda_{1}(b)\\
&\geq& \big(b^{-\alpha}-b^2)\Lambda_1(1)>0,
\end{eqnarray*}
$$
E_b(1)=-(1+b)^2\Lambda_1(b)<0.
$$
 Consequently the set 
 $$
 \mathcal{I}^\alpha(b)\triangleq \big\{ x>1;\; E_b(x)>0\big\}.
 $$ 
 is connected and takes  the form $]\beta,+\infty[$, with
$$ 
E_b(\beta)=0.
$$
Hence the integer $N$ is chosen as
$$
N=[\beta].
$$
\vspace{0,2cm}

{\bf (2)} To prove that $x\mapsto\Delta_x$ is decreasing we shall compute its  derivative with respect to $x$. Pain computations give,
\begin{eqnarray*}
\partial_x \Delta_x &=& 2(b^{-\alpha}+1)\Big[(b^{-\alpha}+1)\Theta_x-\big(1+b^{2}\big)\Lambda_1(b)\Big]{\partial_x  \Theta_x}-8b^2\Lambda_x(b){\partial_x \Lambda_x(b)}.
\end{eqnarray*}
Since $x\mapsto\Lambda_x(b)$ is decreasing, $x\mapsto\Theta_x$ is increasing  and $\Lambda_x(b)\geq0$  (see (1) from Lemma \ref{lem1}) then we deduce  that
\begin{align*}\label{revch2}
{\partial_x \Delta_x}&> 2(b^{-\alpha}+1)\Big[(b^{-\alpha}+1)\Theta_x-\big(1+b^{2}\big)\Lambda_1(b)\Big]{\partial_x  \Theta_x}\\ &> 2(b^{-\alpha}+1)E_x(b){\partial_x  \Theta_x}.
\end{align*}
Hence, for all $x\in  \mathcal{I}^\alpha(b)$ we have
$$
{\partial_x \Delta_x}>0.
$$
This shows that  $x\mapsto \Delta_x$ is strictly  increasing.\\
Now, recall that 
$$
\lambda_x^+\triangleq 1+(b^{-\alpha}-1)\Theta_x-(1-b^2)\Lambda_1(b)+\sqrt{\Delta_x}.
$$
Using the fact that $x\mapsto\Theta_x$ is increasing (according to Lemma \ref{lem1})  combined  with the increasing property of $x\mapsto \Delta_x$ we get the desired result.

So it remains to establish that the mapping  $x\mapsto\lambda_x^-$ is strictly decreasing. For this aim we calculate  
its derivative   with respect to $x$,
\begin{alignat*}{2}
{\partial_x \lambda_x^-}&=&& (b^{-\alpha}-1)\partial_x\Theta_x-\dfrac{\partial_x{ \Delta_x}}{2\sqrt{\Delta_x}}
\\ &= &&(b^{-\alpha}-1){\partial_x  \Theta_x}-(b^{-\alpha}+1)\Big[\frac{(b^{-\alpha}+1)\Theta_x-\big(1+b^{2}\big)\Lambda_1(b)}{\sqrt{\Delta_x}}\Big]{\partial_x  \Theta_x}+4b^2{\partial_x \Lambda_x(b)}\dfrac{\Lambda_x(b)}{\sqrt{\Delta_x}}\\ &<&& 
b^{-\alpha}\Big[1-\frac{(b^{-\alpha}+1)\Theta_x-\big(1+b^{2}\big)\Lambda_1(b)}{\sqrt{\Delta_x}}\Big]\partial_x  \Theta_x\\ & &&-\Big[1+\frac{(b^{-\alpha}+1)\Theta_x-\big(1+b^{2}\big)\Lambda_1(b)}{\sqrt{\Delta_x}}\Big]\partial_x  \Theta_x.
\end{alignat*}
where we have used in the last inequality the decreasing property of the mapping  $x\mapsto\Lambda_x(b)$   and the fact that $\Lambda_x(b)\geq0$ . Now from the inequality
$$
(b^{-\alpha}+1)\Theta_x-\big(1+b^{2}\big)\Lambda_1(b)>E_x(b)>0,
$$
and  the expression of $\Delta_x$ we deduce that
\begin{alignat*}{2}
{\partial_x \lambda_x^-} &<&& 0.
\end{alignat*}
This gives the desired result.
\vspace{0,5cm}

{\bf (3)} This follows easily from (2) and the obvious fact
$$
\lambda_n^-\leq \lambda_n^+.
$$
\end{proof}
\section{Bifurcation at simple eigenvalues}\label{sec45}
In this section we shall prove Theorem \ref{main} which is deeply related to the spectral study developed in the preceding section combined with Crandall-Rabinowitz's Theorem. To construct the function  spaces where the bifurcation occurs we shall take into account the restriction to  the high  frequencies stated in Proposition \ref{lem2c2} and include the $m$-fold symmetry of the V-states. To proceed,  fix $b \in (0,1)$ and $m>N$, where $N$ is defined in \mbox{Proposition \ref{lem2c2}}  and  Remark \ref{rmq1}.
Set,
$$
X_m=C^{2-\alpha}_{m}(\mathbb{T})\times C^{2-\alpha}_{m}(\mathbb{T}),
$$
where $C^{2-\alpha}_{m}(\mathbb{T})$ is the space of the $2\pi-$periodic functions $f\in C^{2-\alpha}(\mathbb{T})$ whose  Fourier series is given by 
$$
f(w)=\sum_{n=1}^{\infty}a_{n}\overline{w}^{nm-1},\quad w\in\mathbb{T},\quad a_n\in \mathbb{R}.
$$
This space is equipped with its  usual norm. We define the ball of radius $r\in(0,1)$ by
$$
B_r^m=\Big\{f\in X_m, \,\|f\|_{C^{2-\alpha}(\mathbb{T})}\le r\Big\}
$$
and we introduce  the neighborhood of zero,
$$V_{m,r}\triangleq B_r^m\times B_r^m.
$$
The set $V_{m,r}$ is endowed with the  induced topology of the product spaces.\\
 Take  $(f_1,f_2)\in V_{m,r}$ then  the expansions of the associated conformal mappings $\phi_1, \phi_2$
outside the unit disc $\big\{z \in \CC; \, \vert z\vert \geq 1\big\}$ are given successively  by
$$
\phi_1(z)=z+f_1(z)=z\Big(1+\sum_{n=1}^{\infty}\frac{a_{n}}{z^{nm}}\Big)
$$
and
$$
\phi_2(z)=bz+f_2(z)=z\Big(b+\sum_{n=1}^{\infty}\frac{b_{n}}{z^{nm}}\Big).
$$
This structure  provides the $m-$fold symmetry of the associated boundaries $\phi_1(\mathbb{T})$ and $\phi_2(\mathbb{T})$, via the relation
\begin{equation}\label{mfo1}
\phi_j\big(e^{2i\pi/m}z\big)=e^{2i\pi/m}\phi_j(z),\quad j=1,2 \quad\textnormal{and}\quad\vert z\vert\geq1.
\end{equation}
For functions $f_1$ and $f_2$ with small size  the boundaries can be seen as a small perturbation of the boundaries of the  annulus $\big\{z\in \CC;\, b\le |z|\leq 1 \big\}.$
Set
$$
H_m=\Bigg\{g\in C^{1-\alpha}(\mathbb{T}),\, g(w)=i\sum_{n\geq 1}A_{n}\big(w^{mn}-\overline{w}^{mn}\big),\, A_n\in\RR,\, n\in\NN^*\Bigg\}
$$
and define the product space $Y_m$ by
$$
Y_m=H_m\times H_m.
$$
From Proposition \ref{lem2c2} recall the definition of the eigenvalues $\lambda_m^{\pm}$ and the associated angular velocities are
\begin{eqnarray*}
\Omega_m^{\pm}&=&\frac{1}{2}-\frac12\lambda_m^{\pm}\\
&=&\frac12\widehat{C}_m\mp \frac12\sqrt{\Delta_m}
\end{eqnarray*}
with
$$
\Delta_m\triangleq \Big((b^{-\alpha}+1)\Theta_{m}-(1+b^2)\Lambda_{1}(b)\Big)^2-4b^2\Lambda_{m}^2(b)
$$
and
$$
\widehat{C}_m\triangleq (1-b^{-\alpha})\Theta_m+(1-b^2)\Lambda_1(b).
$$
Note that $\Theta_m$ and $\Lambda_m(b)$ were introduced in Lemma \ref{lem0}. The V-states equations are described  in \eqref{g_1} and \eqref{Rotaeq1} which we restate here: for $j\in\{1,2\}$,
$$
F_j\big(\Omega,\phi_1,\phi_2\big)(w)\triangleq G_j\big(\Omega, f_1, f_2\big)(w)=0,\quad \forall\, w\in \mathbb{T}; \quad\hbox{and}\quad G\triangleq(G_1,G_2).
$$
with 
\begin{eqnarray*}
F_j\big(\Omega,\phi_1,\phi_2\big)(w)&\triangleq& \Omega\,\Ima\Big\{\phi_j(w)\overline{w}\,{\overline{\phi_j'(w)}}\Big\}\\
&+&{C_\alpha}\Ima\Bigg\{\bigg(\fint_\mathbb{T}\frac{\phi_2'(\tau)d\tau}{\vert \phi_j(w)-\phi_2(\tau)\vert^\alpha}-\fint_\mathbb{T}\frac{\phi_1'(\tau)d\tau}{\vert \phi_j(w)-\phi_1(\tau)\vert^\alpha}\bigg) \overline{w}\,{\overline{\phi_j'(w)}}\Bigg\}.
\end{eqnarray*}

\vspace{0,5cm}
Now, to apply Crandall-Rabinowitz's Theorem   it suffices to show the following result.
\begin{proposition}\label{prozq}  Let  $N$ be as in the part $(1)$ of Proposition $\ref{lem2c2}$ and   $m\geq N,$ and take  $\Omega\in\big\{\Omega_m^{\pm}\big\}$. Then, the following assertions hold  true. 
\begin{enumerate}
\item There exists $r>0$ such that $G:\mathbb{R}\times V_{m,r}\to Y_m$ is well-defined and of class $C^1$.
\item   The kernel of $DG(\Omega,0,0)$ is one-dimensional  and generated by 
\[v_{0,m}:\,w\in \mathbb{T}\mapsto \left( {\begin{array}{cc}
   \Omega+b^{-\alpha}\Theta_m-\Lambda_1(b)\\
 -\Lambda_m(b)\\
  \end{array} } \right)\overline{w}^{m-1}.
  \]
\item  The range of  $DG(\Omega,0,0)$ is closed  and is  of co-dimension one in $Y_m$. 
\item Transversality assumption: If $\Omega$ is a simple eigenvalue $(\Delta_m> 0)$ then
$$
\partial_\Omega D G(\Omega_m^\pm,0,0)v_{0,m}\not\in R\big(DG(\Omega_m^\pm,0,0)\big)\cdot
$$
\end{enumerate}
\end{proposition}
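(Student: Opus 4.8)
The plan is to verify, one at a time, the four hypotheses of Crandall--Rabinowitz's Theorem, assembling the regularity of Proposition~\ref{reg}, the Fourier-multiplier description of Lemma~\ref{lem0}, and the spectral separation of Proposition~\ref{lem2c2}. For assertion~(1), the $C^1$ regularity of $G$ on $\RR\times V$ is already granted by Proposition~\ref{reg}, so the only new point is that $G$ sends $\RR\times V_{m,r}$ into $Y_m$. I would feed the $m$-fold relation \eqref{mfo1} into the definition of $G_j$ and perform the change of variable $\tau\mapsto e^{2i\pi/m}\tau$ in the integrals $S(\phi_i,\phi_j)$, obtaining $G_j(\Omega,f_1,f_2)(e^{2i\pi/m}w)=G_j(\Omega,f_1,f_2)(w)$; together with the fact, established in Proposition~\ref{reg}, that the Fourier coefficients are purely imaginary, this forces $G_j\in H_m$. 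For assertion~(2), I would restrict \eqref{dff} to the symmetric space: a pair $(h_1,h_2)\in X_m$ carries only the frequencies $n=km-1$, $k\geq 1$, so that $DG(\Omega,0,0)(h_1,h_2)$ is governed by the matrices $M^\alpha_{km}$ acting on the coefficient vectors $(a_k,c_k)$, and membership in the kernel amounts to $M^\alpha_{km}(a_k,c_k)^{\top}=0$ for all $k$. For $k=1$ we have $\det M^\alpha_m=0$ by the very choice $\Omega\in\{\Omega_m^{\pm}\}$, and a direct substitution verifies that $v_{0,m}$ spans the null space (the second equation being immediate, since the lower-right entry of $M^\alpha_m$ equals $b$ times the first component of $v_{0,m}$). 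For $k\geq 2$ the strict interlacing $\lambda_{km}^-<\lambda_m^{\pm}<\lambda_{km}^+$ from part~(3) of Proposition~\ref{lem2c2} places $\lambda_m^{\pm}$ strictly between the two roots of $\det M^\alpha_{km}$, whence $\det M^\alpha_{km}\neq 0$ and $(a_k,c_k)=0$; thus the kernel is exactly $\mathrm{span}\{v_{0,m}\}$.

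For assertion~(3), I would split $Y_m$ into the lowest block $k=1$ and the remainder. On the blocks $k\geq 2$ the matrices $M^\alpha_{km}$ are invertible: writing $\det M^\alpha_{km}=b\,q_{km}(\lambda)$ with $q_{km}$ the quadratic of \eqref{eqch2} and $\lambda=1-2\Omega$, the interlacing of Proposition~\ref{lem2c2} places $\lambda$ strictly between the two roots $\lambda_{km}^{\pm}$, so $\det M^\alpha_{km}<0$; letting $k\to\infty$ in the same interlacing shows that the limit $\mu$ in \eqref{Asdet} is itself strictly negative, whence $|\det M^\alpha_{km}|\gtrsim 1$ uniformly and the inverses are uniformly bounded. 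Consequently $DG(\Omega,0,0)$ is an isomorphism from the $k\geq 2$ part of $X_m$ onto a closed subspace of $Y_m$, while on the block $k=1$ the rank-one matrix $M^\alpha_m$ has image a line in $\RR^2$, contributing exactly one missing direction. Hence $R(DG(\Omega,0,0))$ is closed of co-dimension one, a complement being the single lowest-mode vector dual to the cokernel of $M^\alpha_m$. This is the step I expect to be the most delicate, as it hinges on controlling the entire high-frequency tail, and hence on the sharp determinant asymptotics \eqref{Asdet} combined with the negativity furnished by Proposition~\ref{lem2c2}; the other three assertions are comparatively soft.

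For the transversality assertion~(4), I would use the expression for $\partial_\Omega DG(\Omega,0,0)$ recorded in Proposition~\ref{reg}, which on the symmetric space is the multiplier $\frac{i}{2}\sum_k (km)\,\mathrm{diag}(1,b)(a_k,c_k)^{\top}(w^{km}-\overline{w}^{km})$. Evaluating at $v_{0,m}$ produces the pure lowest-mode element $\frac{i}{2}m\,(a,bc)^{\top}(w^m-\overline{w}^m)$, where $(a,c)$ is the coefficient vector of $v_{0,m}$, and by assertion~(3) this lies in the range if and only if $(a,bc)^{\top}\in R(M^\alpha_m)$. Pairing with the left null vector $\big(b\Lambda_m(b),\,-(\Omega-\Theta_m+b^2\Lambda_1(b))\big)$ of $M^\alpha_m$ and simplifying through $\lambda=1-2\Omega$ collapses the obstruction to $b\Lambda_m(b)\,(C_m-\lambda_m^{\pm})=\mp\,b\Lambda_m(b)\sqrt{\Delta_m}$, which is nonzero precisely because $\Lambda_m(b)>0$ (Lemma~\ref{lem1}) and $\Delta_m>0$ (the simple-eigenvalue hypothesis). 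This is exactly the required transversality condition, and, unlike the range analysis, it is purely algebraic once the left null vector has been identified.
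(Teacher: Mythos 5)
Your overall architecture coincides with the paper's: assertion (1) is proved by the same change of variables $\tau\mapsto e^{2i\pi/m}\tau$ combined with \eqref{mfo1}; assertion (2) by the same block-diagonal reading of \eqref{dff}, with the non-vanishing of $\det M^\alpha_{km}$ for $k\ge 2$ correctly deduced from the interlacing in Proposition \ref{lem2c2} (your remark that the second row of $M^\alpha_m$ annihilates $v_{0,m}$ trivially is accurate); and your assertion (4) — pairing $e_m=(a,bc)^\top$ with the left null vector $\big(b\Lambda_m(b),\,-(\Omega-\Theta_m+b^2\Lambda_1(b))\big)$ and reducing the obstruction to $\mp b\Lambda_m(b)\sqrt{\Delta_m}$ — is an equivalent and slightly cleaner route than the paper's, which instead derives the collinearity condition $(\Omega+b^{-\alpha}\Theta_m-\Lambda_1(b))^2=b^2\Lambda_m^2(b)$ and eliminates it using $\det M^\alpha_m=0$ and simplicity of the eigenvalue. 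Both give the same conclusion.

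The genuine gap is in assertion (3). The interlacing does give $\det M^\alpha_{km}<0$ for $k\ge2$ and a uniform lower bound $|\det M^\alpha_{km}|\gtrsim 1$, but the inference ``the inverses are uniformly bounded, consequently $DG(\Omega,0,0)$ is an isomorphism from the $k\ge2$ part of $X_m$ onto a closed subspace of $Y_m$'' does not follow. Uniform boundedness of the symbol matrices $\tfrac1k (M^\alpha_{km})^{-1}$ only controls the Fourier coefficients $(a_k,c_k)$ of the candidate preimage; a Fourier multiplier with a merely bounded symbol is in general \emph{not} bounded on H\"older spaces $C^{\beta}(\mathbb{T})$, $0<\beta<1$, so one still has to prove that the preimage lands in $C^{2-\alpha}\times C^{2-\alpha}$. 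This is where essentially all of the work in the paper's proof of (3) lies: one writes $\det M^\alpha_{km}=\mu-\rho_k$ with $\rho_k=-\nu k^{\alpha-1}+O(k^{\alpha-2})$ from \eqref{Asdet}, expands $1/(\mu-\rho_k)$ into a constant plus finitely many powers of $\rho_k$ plus a controlled remainder, realizes each correction as convolution with an explicit kernel (in particular $K_1(w)=\sum_{k\ge2}k^{\alpha-1}w^k$, shown to lie in $L^1(\mathbb{T})$ via the pointwise bound \eqref{kern1}), and uses the continuity of the Szeg\"o projection on $C^{1-\alpha}(\mathbb{T})$ to justify term-by-term differentiation. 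Without some version of this multiplier analysis, the closedness of the range — equivalently, that every element of the codimension-one subspace you describe is actually attained — remains unproved, even though your identification of the missing direction via the cokernel of the rank-one block $M^\alpha_m$ is correct. (Incidentally, your observation that the limiting determinant $\mu$ is strictly negative is consistent with the interlacing; what the argument actually needs is only $\mu\neq0$, so that $\rho_k/\mu\to0$.)
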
 
\begin{proof}

{\bf (1)} Compared to Proposition \ref{reg} we need just  to check that $G=(G_1,G_2)$ preserves the $m-$fold symmetry and  maps $X_m$ into $Y_m$. For this end, it is sufficient to check  that for  given $(f_1,f_2)\in X_m$, the coefficients  of the Fourier series of $F_j(\Omega, \phi_1,\phi_2) $ vanish at frequencies which are not integer multiple of $m$. This amounts to proving that,
$$
F_j(\Omega, \phi_1,\phi_2) \Big(e^{i\frac{2\pi}{m}}w\Big)=F_j(\Omega, \phi_1,\phi_2)(w), \quad w\in\mathbb{T},\quad j=1,2\cdot
$$
This property is obvious for the first term  $ \displaystyle{\textnormal{Im}\Big\{\overline{w}\overline{\phi_j'(w)}{\phi_j(w)}\Big\}}$. For the two last terms in the expression of  $F_j$ it is enough to check the identity,
$$
\forall w\in \mathbb{T},\quad \Phi_j\big(e^{i\frac{2\pi}{m}}w\big)=e^{i\frac{2\pi}{m}}\Phi_j(w),
$$
with
\begin{eqnarray*} 
\Phi_j(w)&\triangleq& \fint_{\mathbb{T}}\frac{\phi_2'(\tau)}{\vert \phi_j(w)-\phi_2(\tau)\vert^\alpha}d\tau
\end{eqnarray*}
This follows  easily by making  the change of variables $\tau=e^{i2\pi/m}\zeta$ and from  \eqref{mfo1}. Indeed,
\begin{eqnarray*}
\Phi_j\big(e^{i\frac{2\pi}{m}}w\big)&=&e^{i2\pi/m}\fint_{\mathbb{T}}\frac{\phi_2'(e^{i2\pi/m}\zeta)}{\vert \phi_j(e^{i\frac{2\pi}{m}}w)-\phi_2(e^{i2\pi/m}\zeta)\vert^\alpha}d\zeta\\
&=&e^{i\frac{2\pi}{m}}\fint_{\mathbb{T}}\frac{\phi_2'(\zeta)}{\vert \phi_j(w)-\phi_2(\zeta)\vert^\alpha}d\zeta\\
&=&e^{i\frac{2\pi}{m}}\Phi_j(w).
\end{eqnarray*}
 This concludes the proof of the following statement,
$$
(f_1,f_2)\in V_{m,r}\Longrightarrow G(\Omega, f_1,f_2)\in Y_m.
$$

\vspace{0,5cm}
{\bf (2) } We shall describe the kernel of  the linear operator  $DF(\Omega_m^\pm,0,0)$ and show that it is one-dimensional. Let  $h_1,h_2$ be two functions in $C^{2-\alpha}_m\big(\mathbb{T}\big)$ such that
 \begin{equation}\label{ds1}
h_1(w) =\sum_{n=1}^{\infty}{a_{n}}{\overline{w}^{nm-1}}\quad\textnormal{and}\quad h_2(w)= \sum_{n=1}^{\infty}{c_{n}}{\overline{w}^{nm-1}}, \quad w\in\mathbb{T},
 \end{equation}
Recall from Lemma \ref{lem0} the following expression,
\begin{eqnarray}\label{dfff}
DG(\Omega,0,0)\big(h_1,h_2\big)(w)=\frac{i}{2}\sum_{n\geq 1}nm M^\alpha_{nm}\left( \begin{array}{c}
a_n \\
c_n
\end{array} \right)\Big(w^{nm}-\overline{w}^{nm}\Big).
\end{eqnarray}
where the matrix $M_n$ is given for $n\geq2$  by
\begin{equation}\label{matrixxs}
M^\alpha_{n}\triangleq\begin{pmatrix}
 \Omega-\Theta_{n}+b^2\Lambda_{1}(b) & -b^2\Lambda_{n}(b) \\
  b\Lambda_n(b) & b\Omega+b^{1-\alpha}\Theta_{n}-b\Lambda_1(b)
\end{pmatrix}.
\end{equation}
Now, if $\Omega\in\big\{\Omega_m^{\pm} \big\}$ then
\begin{equation*}
\textnormal{det}\big(M_m^\alpha\big)=0.
\end{equation*}
Thus, the kernel of $DG(\Omega,0,0)$ is non trivial and it is  one-dimensional if and only if
\begin{equation}\label{detmn}
\textnormal{det}\big(M_{nm}^\alpha\big)\neq0,\quad \forall\, n\geq2.
\end{equation}
This condition is ensured by the part $(1)$ of the  Proposition \ref{lem2c2}.
Then,  $(h_1,h_2)$ is in the kernel of $DG\big(\Omega,0,0)$ if and only the Fourier coefficients in the identity \eqref{dfff} vanish, namely,
$$
a_n=c_n=0\quad\textnormal{for all}\quad n\geq 2\quad\textnormal{and}\quad (a_1,c_1)\in\textnormal{Ker}M_m^\alpha.
$$
Hence, a generator of Ker$\Big(DG(\Omega,0,0)\Big)$ can be chosen as  the pair of functions
\begin{equation}\label{gen}
w\in \mathbb{T}\mapsto \left( {\begin{array}{cc}
   \Omega+b^{-\alpha}\Theta_m-\Lambda_1(b)\\
 -\Lambda_m(b)\\
  \end{array} } \right)\overline{w}^{m-1},\quad w\in\mathbb{T}.
\end{equation}

\vspace{0,5cm}
{\bf (3) }
We are going to show that for any $m \geq N$ the range $R\big(DG(\Omega,0,0)\big)$ coincides with the space  of the functions $(g_1,g_2)\in C^{1-\alpha}(\mathbb{T})\times C^{1-\alpha}(\mathbb{T})$ such that
\begin{equation}\label{g1g2}
g_{1}(w)=\sum_{n\geq 1}iA_{n}\big(w^{mn}-\overline{w}^{mn}\big),\quad g_{1}(w)=\sum_{n\geq 1 }iC_{n}\big(w^{mn}-\overline{w}^{mn}\big)
\end{equation}
where  $A_{n},C_{n}\in\RR$ for all $n\in\NN^*$ and there exists $(a_{1},c_{1})\in \RR^2$ such that
\begin{equation}\label{cm}
M_{m}\left( \begin{array}{c}
a_{1} \\
c_{1}
\end{array} \right)=\left( \begin{array}{c}
A_{1} \\
C_{1}
\end{array} \right).
\end{equation} 
For the sake of simple notation we remove in this part the parameter  $\alpha$ from $M_n^\alpha.$
The range of operator $DG(\Omega,0,0)$ is obviously included in the space defined above which is closed and of   co-dimension $1$ in $Y_m$. Therefore it remains to check just the converse. Let $g_1$ and $g_2$   be two functions in $C^{1-\alpha}(\mathbb{T})$ with Fourier series expansions as in \eqref{g1g2} and \eqref{cm}. We shall prove that the equation
$$
\frac1m DG(\Omega,0,0)(h_1,h_2)=(g_1,g_2)
$$
admits a solution $(h_1, h_2)$ in the space $X_m,$ where  the Fourier series expansions  of these functions are given in \eqref{ds1}.
Then according to \eqref{dfff}, the preceding equation  is equivalent to
$$
n \,M_{mn}\left( \begin{array}{c}
a_{n} \\
c_{n}
\end{array} \right)=\left( \begin{array}{c}
A_{n} \\
C_{n}
\end{array} \right),\quad \forall n\in\NN^\star.
$$
For $n=1$, the existence follows from the condition \eqref{cm} and therefore we shall only focus on $n\geq2.$ Owing to \eqref{detmn} the sequences $(a_{n})_{n\geq2}$ and $(c_{n})_{n\geq 2}$ are uniquely determined by  the formulae
\begin{equation}\label{m1}
\left( \begin{array}{c}
a_{n} \\
c_{n}
\end{array} \right)=\frac{1}{n}M_{mn}^{-1}\left( \begin{array}{c}
A_{n} \\
C_{n}
\end{array} \right),\quad n\geq2.
\end{equation}
By computing the matrix $M_{mn}^{-1}$  we deduce that for all $n\geq2,$  
\begin{eqnarray*}
a_{n}&=&\frac{b\big(\Omega+b^{-\alpha}\Theta_{nm}-\Lambda_1(b)\big)}{n\,\textnormal{det}\big({M}_{nm}\big)}A_{n}+\frac{b^2\Lambda_{nm}(b)}{n\,\textnormal{det}\big(M_{nm}\big)}C_{n}
\end{eqnarray*}
and
\begin{eqnarray*}
c_{n}&=&-\frac{b\,\Lambda_{nm}(b)}{n\,\textnormal{det}\big(M_{nm}\big)}A_{n}+\frac{\big(\Omega-b\Theta_{nm}+b^2\Lambda_1(b)\big)}{n\,\textnormal{det}\big(M_{nm}\big)}C_{n}.
\end{eqnarray*}
Therefore the proof of $(h_1,h_2)\in X_m$ amounts to showing that  
$$
w\mapsto \left( \begin{array}{c}
h_1(w)-a_1\overline{w}^{m-1} \\
h_2(w)-c_1\overline{w}^{m-1}
\end{array} \right)\in C^{2-\alpha}(\mathbb{T})\times C^{2-\alpha}(\mathbb{T}).
$$
We shall develop the computations only for the first component and the second one can be done in a similar way. 
We set ${\displaystyle \widetilde{h}_1(w)=h_1(w)-a_1\overline{w}^{m-1}}$ and 
$$
 H(w)\triangleq \sum_{n\geq 2} \frac{A_{n}}{n\,\textnormal{det}\big(M_{nm}\big)}{w}^{n},\quad  H_1(w)\triangleq \sum_{n\geq 2} \frac{C_{n}}{n}{w}^{n}\cdot
$$
Then in view of \eqref{As1} the function  $\widetilde{h}_1(w)$ can be rewritten as follows 
\begin{eqnarray*}
\widetilde{h}_1(w) &=&  C_1\, wH(\overline{w}^m)+C_2\, w\big(H*K_1\big)(\overline{w}^m)\\ &+&w\,\big(H*K_2\big)(\overline{w}^m)+b^2w\big(H_1*K_3\big)(\overline{w}^m),
\end{eqnarray*}
with $C_1$ and $C_2$ two constants. The kernels are defined by  
$$
K_1(w)\triangleq \sum_{n\geq 2} \frac{w^n}{n^{1-\alpha}},\quad 
K_2(w)\triangleq \sum_{n\geq 2} O\Big(\frac{1}{n^{2-\alpha}}\Big)w^n,
$$
and
$$
K_3(w)\triangleq \sum_{n\geq 2} \frac{\Lambda_{nm}(b)}{\textnormal{det}\big(M_{nm}\big)}w^n.
$$
The convolution  is understood in the usual one: for two continuous  functions $f,g;\mathbb{T}\to \CC$ we define
$$
\forall w\in \mathbb{T},\quad f*g(w)=\fint_{\mathbb{T}}f(\tau) g({\tau}\overline{w})\frac{d\tau}{\tau}\cdot
$$
Assume for a while that $w\mapsto H(w)$ belongs to $C^{2-\alpha}(\mathbb{T})$. Then by virtue of the classical convolution law $L^1(\mathbb{T})*C^{2-\alpha}(\mathbb{T})\to C^{2-\alpha}(\mathbb{T})$ , it suffices to show that the kernels  $K_1, K_2$ and $K_3$  belong to $L^1(\mathbb{T})$. The second and the third kernels are easy to analyze because the series converge   absolutely,
$$
\Vert K_2\Vert_{L^{\infty}(\mathbb{T})}\lesssim\sum_{n\geq 1}\frac{1}{n^{2-\alpha}}\leq C\cdot
$$
Similarly, owing to \eqref{As3} one has
$$
\Vert K_3\Vert_{L^{\infty}(\mathbb{T})}\lesssim\sum_{n\geq 0}b^{n}\leq C\cdot
$$
and therefore $K_2,K_3\in L^1(\mathbb{T})$. Note that to bound the series we have used the fact that the sequence $\big(\textnormal{det} M_{nm}\big)_{n\geq2}$ does not vanish and converges to a strictly positive number  $D_\infty$ defined \mbox{in \eqref{LimitD}.} It remains to show that $K_1\in L^1(\mathbb{T})$. For this end we shall use the following estimate: for any $\beta\in \big(\alpha,1\big)$
\begin{equation}\label{kern1}
|K_1(e^{i\theta})|\lesssim\frac{1}{\sin^\beta(\frac\theta2)},\quad\forall \theta\in (0,2\pi)\cdot
\end{equation}
which is true for all $\alpha\in[0,1[$ and for a proof we can see  \cite{H-H}.

Now to complete  the reasoning it remains to prove the preceding claim asserting that the function $H$ belongs to the space $C^{2-\alpha}(\mathbb{T})$.
To prove this  we write in view of \eqref{Asdet},
\begin{eqnarray*}
\textnormal{det}(M_{nm})&=&\mu+\frac{\nu}{n^{1-\alpha}}+O\Big(\frac{1}{n^{2-\alpha}}\Big)\\
&=&\mu-\rho_n,
\end{eqnarray*}
where $\mu$ and $\nu$ are two constants (depending on $m$)  and\begin{equation}\label{Ass11}
 \rho_n\triangleq-\frac{\nu}{n^{1-\alpha}}+O\Big(\frac{1}{n^{2-\alpha}}\Big)\cdot
\end{equation} 
This allows to get
 \begin{eqnarray*}
H(w)&=& \sum_{ n\geq 2}\frac{A_{n}}{n\big(\mu-\rho_n\big)}{w}^{n}.
\end{eqnarray*}
Then one may use  the general  decomposition: for $k\in \NN,$
$$
\frac{1}{\mu-\rho_n}=\frac{\mu^{-k-1}\rho_n^{k+1}}{\mu-\rho_n}+\sum_{j=0}^k\mu^{-j-1}{\rho_n^j}
$$
which yields,
 \begin{eqnarray*}
H(w)&=& \mu^{-k-1}\sum_{ n\geq 2}\frac{A_{n}\rho_n^{k+1}}{n\big(\mu-\rho_n\big)}{w}^{n}+\sum_{j=0}^{k}\mu^{-j-1}\sum_{n\geq 2}\frac{A_{n}\rho_n^j}{n}{w}^{n}\\ &\triangleq &  \mu^{-k-1}H_{k+1}({w})+\sum_{j=0}^k\mu^{-j-1}L_j(w)\cdot
\end{eqnarray*}
Since the sequence $(A_{n})_{n\geq 2}$ is bounded then by \eqref{Ass11} we obtain
$$
\bigg\vert \frac{A_{n}\rho_n^{k+1}}{n\big(\mu-\rho_n\big)}\bigg\vert\lesssim \frac{\big\vert\rho_n\big\vert^{k+1}}{n}\lesssim\frac{1}{n^{1+(1-\alpha)(k+1)}}\cdot
$$
Thus for $k$ large enough we get $H_{k+1}\in C^{2-\alpha}(\mathbb{T}).$ Concerning the estimate of $L_j$ we shall restrict the analysis to $j=0$ and $j=1$ and the higher  terms can be treated in a similar way. We write
$$
L_0(w)=\sum_{n\geq 2}\frac{A_{n}}{n}w^n.
$$
Using the Cauchy-Schwarz inequality we get
\begin{eqnarray*}
\Vert L_0\Vert_{L^\infty}&\le & \sum_{n\geq 2}\frac{\vert A_{n}\vert}{n}\\ &\le & \bigg(\sum_{n\geq 1}\frac{1}{n^2}\bigg)^{1/2} \Big(\sum_{n\geq 2}\vert A_{n}\vert^2\Big)^{1/2}\\ & \lesssim &\Vert g_1\Vert_{L^2}.
\end{eqnarray*}
By the embedding $ C^{1-\alpha}\big(\mathbb{T}\big)\hookrightarrow L^\infty\big(\mathbb{T}\big) \hookrightarrow L^2\big(\mathbb{T}\big)$ we conclude that
$$
\Vert L_0\Vert_{L^\infty} \lesssim \Vert g_1\Vert_{1-\alpha}.
$$
It remains to prove that $L_0^\prime\in C^{1-\alpha}(\mathbb{T})$. For this end, one needs first to check that we can differentiate   the series term by term.
Fix $N\geq 1$ and define
$$
L_0^N(w)\triangleq\sum_{n= 2}^N\frac{A_{n}}{n}{w}^{n}.
$$
From Cauchy-Schwarz inequality  we find
\begin{eqnarray*}
\|L_0^N-L_0\|_{L^\infty(\mathbb{T})}&\lesssim &\bigg(\sum_{n\geq N+1}\frac{1}{n^2}\bigg)^{1/2}\Vert g_1\Vert_{1-\alpha}\\ &\lesssim &\frac{\Vert g_1\Vert_{1-\alpha}}{N^{1/2}}\cdot
\end{eqnarray*}
Hence,
\begin{equation}\label{uni1}
\lim_{N\to+\infty}\|L_0^N-L_0\|_{L^\infty(\mathbb{T})}=0.
\end{equation}
Differentiating $L_0^N$ term by term one gets
\begin{eqnarray*}
(L_0^N)^\prime(w)&=&\overline{w}\sum_{n= 2}^N{A_{n}}{w}^{n}\\
&\triangleq &\overline{w}\,  g^N_1(w).
\end{eqnarray*}
Put
$$
g_1^+(w)=\sum_{n\geq2}{A_{n}}{w}^{n},
$$
then  using the continuity of Szeg\"o protection:
 \[\Pi:\sum_{n\in \mathbb{Z}}a_n w^n\mapsto \sum_{n\in \mathbb{N}}a_n w^n
 \]
on H\"{o}lder spaces $C^{1-\alpha}(\mathbb{T})$ for $ \alpha\in (0,1)$ we may conclude that  $g_1^+$ belongs to $C^{1-\alpha}(\mathbb{T})$, (for more details see for example \cite{H-H}). By virtue of a classical result on Fourier series one gets
\begin{equation*}
\lim_{N\to+\infty}\|g_1^N-g_1^+\|_{L^\infty(\mathbb{T})}=0
\end{equation*}
and consequently 
\begin{equation}\label{uni2}
\lim_{N\to+\infty}\|(L_0^N)^\prime-\overline{w}\,  g_1^+\|_{L^\infty(\mathbb{T})}=0.
\end{equation}
Putting together \eqref{uni1} and \eqref{uni2} we deduce  that $L_0$ is differentiable and
$$
L_0^\prime(w)=\overline{w}\,  g_1^+(w),\quad w\in \mathbb{T}.
$$
This concludes that $L_0\in C^{2-\alpha}.$
Now, 
as before,  we can easily get  $L_1\in L^\infty(\mathbb{T})$ and we shall  check that $L_1^\prime \in C^{1-\alpha}\big(\mathbb{T}\big)$.  Arguing in a similar way to $L_0$ we can differentiate term by term the series defining $L_j$ leading to 
$$
L'_1(w)= \overline{w}\sum_{n\geq 2}{A_{n}\rho_n}w^{n}.
$$
Note that with the same kernels $K_1$ and $K_2$ as before one can write 
$$
{w}{L'}_1(w)=-\nu\big(K_1*g_1^+\big)(w)+\big(K_2*g_1^+\big)(w).
$$
Using the fact that $g_1^+$ belongs to $C^{1-\alpha}(\mathbb{T})$ and $K_1,K_2\in L^1(\mathbb{T})$ we obtain  the desired result.\\

\vspace{0,5cm}
{\bf (4) } {\it  The transversality condition.}
Let $\Omega\in \{\Omega_m^\pm\}$ be a simple eigenvalue associated to the frequencies $m$ and $v_{0,m}$ be the generator of the kernel $DG(\Omega,0,0)$ defined in the part $(2)$ of Proposition \ref{prozq}. We shall prove that
$$
{\partial_\Omega}DG(\Omega,0,0)v_{0,m}\notin R\big(DG(\Omega,0,0)\big),
$$
with
\begin{equation*}
v_{0,m}(w)= \left( {\begin{array}{cc}
   \Omega+b^{-\alpha}\Theta_m-\Lambda_1(b)\\
 -\Lambda_m(b)\\
  \end{array} } \right)\overline{w}^{m-1},\quad w\in\mathbb{T}.
\end{equation*}
Differentiating   \eqref{df001} and \eqref{df002}  with respect to $\Omega$ we get 
$$
{\partial_\Omega}DG_1(\Omega,0,0)(h_1,h_2)(w)=\Ima\Big\{\overline{h_1'(w)}+\overline{w}\,{h_1(w)}\Big\}
$$
and
$$
{\partial_\Omega}DG_2(\Omega,0,0)(h_1,h_2)(w)=b\Ima\Big\{\overline{h_2'(w)}+\overline{w}\,{h_2(w)}\Big\}.
$$
Hence,
\begin{eqnarray*}
\partial_\Omega DG(\Omega,0,0)(v_{0,m})(w)&=&\frac m2 i\, \left( {\begin{array}{cc}
   \Omega+b^{-\alpha}\Theta_m-\Lambda_1(b)\\
 -b\Lambda_m(b)\\
  \end{array} } \right)\big(w^{m}-\overline{w}^m\big)\\
  &\triangleq& \frac m2 i\, \, e_m\;\big(w^{m}-\overline{w}^m\big).
\end{eqnarray*}
This pair of functions is in the range of $DG(\Omega,0,0)$ if and only if the vector $e_m\in\RR^2$ is a scalar multiple of one column of the matrix $M_m^\alpha$ seen in \eqref{matrixxs}, which happens if and only if
\begin{equation}\label{haq1}
\Big(\Omega+b^{-\alpha}\Theta_m-\Lambda_1(b)\Big)^2-b^2\Lambda_m^2(b)=0.
\end{equation}
Combining this equation with $\hbox{det}\,M_m^\alpha=0$ we get
$$
 \Big(\Omega-\Theta_{m}+b^2\Lambda_{1}(b)\Big)\Big(\Omega+b^{-\alpha}\Theta_{m}-\Lambda_1(b)\Big)+\Big(\Omega+b^{-\alpha}\Theta_m-\Lambda_1(b)\Big)^2=0.
$$
This yields
$$
\Big(\Omega+b^{-\alpha}\Theta_{m}-\Lambda_1(b)\Big)\Big[2\Omega+(b^2-1)\Lambda_1(b)+(b^{-\alpha}-1)\Theta_m\Big]=0
$$
which is equivalent to 
$$
\Omega+b^{-\alpha}\Theta_{m}-\Lambda_1(b)=0 \quad\hbox{or}\quad 
\Omega=\frac12\Big({\big(1-b^2\big)}\Lambda_1(b)+{\big(1-b^{-\alpha}\big)}\Theta_m\Big).
$$
This first possibility is excluded by \eqref{haq1} because $\Lambda_m(b)\neq0$ and the second one is also impossible  because it corresponds to a multiple eigenvalue which is not the case here. This concludes the proof of Proposition \ref{prozq}.

\end{proof}

 \section{Numerical study of $V$-states}\label{Sec-num}

Even if there is a number of references on the numerical obtention of rotating $V$-states for the vortex patch problem (see for instance \cite{DZ} and \cite{DR}, and more recently \cite{H-F-M-V}), up to our knowledge nothing similar has been done for the quasi-geostrophic problem. Therefore, for the sake of completeness, we will discuss in this section the numerical obtention of $V$-states for the quasi-geostrophic problem in both the simply-connected case and the doubly-connected case. Since the procedure is very similar to that in the vortex patch problem, we will omit some details, which can be consulted in \cite{H-F-M-V}.

\subsection{Simply-connected $V$-states}

We gather the main theoretical arguments from \cite{H-H}. Given a simply-connected domain $D$ with boundary $z(\theta)$, where $\theta\in[0,2\pi)$ is the Lagrangian parameter, and $z$ is counterclockwise parameterized, the contour dynamics equation for the quasi-geostrophic problem is
\begin{equation}
z_t(\theta, t) = \frac{C_\alpha}{2\pi}\int_0^{2\pi}\frac{z_\phi(\phi,t)d\phi}{|z(\phi,t) - z(\theta,t)|^\alpha};
\end{equation}

\noindent from now on, in order not to burden the notation, we will not indicate explicitly the dependence on $t$.

The simply-connected domain $D$ is a $V$-state rotating  with constant angular velocity $\Omega$, if and only if its boundary satisfies the following equation:
\begin{equation}
\label{e:QGcondition0}
\operatorname{Re}\bigg[\bigg(\Omega z(\theta) - \frac{C_\alpha}{2\pi i}\int_0^{2\pi}\frac{z_\phi(\phi)d\phi}{|z(\phi) - z(\theta)|^\alpha}\bigg)\overline{z_\theta(\theta)}\bigg] = 0.
\end{equation}

\noindent However, it is convenient to rewrite \eqref{e:QGcondition0} in the following equivalent form:
\begin{equation}
\label{e:QGcondition0a}
\operatorname{Re}\bigg[\bigg(\Omega z(\theta) - \frac{C_\alpha}{2\pi i}\int_0^{2\pi}\frac{(z_\phi(\phi) - z_\theta(\theta))d\phi}{|z(\phi) - z(\theta)|^\alpha}\bigg)\overline{z_\theta(\theta)}\bigg] = 0.
\end{equation}

\noindent We use a pseudo-spectral method to find $m$-fold $V$-states from \eqref{e:QGcondition0a}. We discretize $\theta\in[0,2\pi)$ in $N$ equally spaced nodes $\theta_i = 2\pi i/N$, $i = 0, 1, \ldots, N-1$. Observe that,
although \eqref{e:QGcondition0} and \eqref{e:QGcondition0a} are trivially equivalent, the addition of $z_\theta(\theta)$ in the numerator cancels the singularity in the denominator; indeed,
\begin{equation}
\label{e:limit0}
\lim_{\phi\to\theta}\frac{z_\phi(\phi) - z_\theta(\theta)}{|z(\phi) - z(\theta)|^\alpha} = 0.
\end{equation}

\noindent Therefore, bearing in mind \eqref{e:limit0}, we can evaluate numerically with spectral accuracy the integral in \eqref{e:QGcondition0a} at a node $\theta = \theta_i$, by means of the trapezoidal rule, provided that $N$ is large enough:
\begin{equation}
\frac{1}{2\pi}\int_0^{2\pi}\frac{(z_\phi(\phi) - z_\theta(\theta_i))d\phi}{|z(\phi) - z(\theta_i)|^\alpha}
\approx \frac{1}{N}\mathop{\sum_{j = 0}^{N-1}}_{j\not=i}\frac{z_\phi(\phi_j) - z_\theta(\theta_i)}{|z(\phi_j) - z(\theta_i)|^\alpha}\cdot
\end{equation}

\noindent Now, in order to obtain $m$-fold $V$-states, we approximate the boundary $z$ as
\begin{equation}
\label{e:z0cos}
z(\theta) = e^{i\theta}\left[1 + \sum_{k = 1}^M a_{k}\cos(m\,k\,\theta)\right], \quad \theta\in[0,2\pi),
\end{equation}

\noindent where the mean radius is $1$; and we are imposing that $z(-\theta) = \overline{ z(\theta)}$, i.e., we are looking for $V$-states symmetric with respect to the $x$-axis. For sampling purposes, $N$ has to be chosen such that $N \ge 2mM+1$; additionally, it is convenient to take $N$ a multiple of $m$, in order to be able to reduce the $N$-element discrete Fourier transforms to $N/m$-element discrete Fourier transforms. If we write $N = m2^r$, then $M = \lfloor (m2^r-1)/(2m)\rfloor = 2^{r-1}-1$.

We introduce \eqref{e:z0cos} into \eqref{e:QGcondition0a}, and approximate the error in that equation by an $M$-term sine expansion:
\begin{equation}
\label{e:V-State0conditions}
\operatorname{Re}\bigg[\bigg(\Omega z(\theta) - \frac{C_\alpha}{2\pi i}\int_0^{2\pi}\frac{(z_\phi(\phi) - z_\theta(\theta))d\phi}{|z(\phi) - z(\theta)|^\alpha} \bigg)\overline{z_\theta(\theta)}\bigg] = \sum_{k = 1}^M b_k\sin(m\,k\,\theta).
\end{equation}

\noindent This last expression can be represented in a very compact way as
\begin{equation}
\label{e:FaO}
\mathcal F_{\alpha,\Omega}(a_1, \ldots, a_M) = (b_1, \ldots, b_M),
\end{equation}

\noindent for a certain $\mathcal F_{\alpha,\Omega}\ : \ \mathbb{R}^{M}\to\mathbb{R}^{M}$.

Remark that, for any value of the parameters $\alpha$ and $\Omega$, we have trivially $\mathcal F_{\alpha,\Omega}(\mathbf 0) = \mathbf 0$, i.e., the unit circumference is a solution of the problem. Therefore, the obtention of a simply-connected $V$-state is reduced to finding numerically a nontrivial root $(a_1, \ldots, a_M)$ of \eqref{e:FaO}. To do so, we discretize the $(M\times M)$-dimensional Jacobian matrix $\mathcal J$ of $\mathcal F_{\alpha,\Omega}$ using first-order approximations. Fixed $|h|\ll1$ (we have chosen $h = 10^{-9}$), we have
\begin{equation}
\label{e:derivative0}
\frac{\partial}{\partial a_1} \mathcal F_{\alpha,\Omega}(a_1, \ldots, a_M) \approx \frac{\mathcal F_{\alpha,\Omega}(a_1 + h, \ldots, a_M) - \mathcal F_{\alpha,\Omega}(a_1, \ldots, a_M)}{h}\cdot
\end{equation}

\noindent Then, the sine expansion of \eqref{e:derivative0} gives us the first row of $\mathcal J$, and so on. Hence, if the $n$-th iteration is denoted by $(a_1, \ldots, a_M)^{(n)}$, then the $(n+1)$-th iteration is given by
\begin{equation}
(a_1, \ldots, a_M)^{(n+1)} = (a_1, \ldots, a_M)^{(n)} - \mathcal F_{\alpha,\Omega}\left((a_1, \ldots, a_M)^{(n)}\right)\cdot [\mathcal J^{(n)}]^{-1},
\end{equation}

\noindent where $[\mathcal J^{(n)}]^{-1}$ denotes the inverse of the Jacobian matrix at $(a_1, \ldots, a_M)^{(n)}$. This iteration converges in a small number of steps to a nontrivial root for a large variety of initial data $(a_1, \ldots, a_M)^{(0)}$. In particular, it is usually enough to perturb the unit circumference by assigning a small value to ${a_1}^{(0)}$, and leave the other coefficients equal to zero. Our stopping criterion is
\begin{equation}
\max\left|\sum_{k = 1}^M b_k\sin(m\,k\,\theta)\right| < tol,
\end{equation}

\noindent where $tol = 10^{-11}$. For the sake of coherence, we change eventually the sign of all the coefficients $\{a_k\}$, in order that, without loss of generality, $a_1>0$.

Before moving forward, let us mention that it is possible to work numerically with other parametrizations than \eqref{e:z0cos}, like for example
\begin{equation}
\label{e:z0conformal}
z(\theta) = e^{i\theta}\Big(1 + \sum_{k = 1}^Ma_{k}e^{-imk\theta}\Big), \quad \theta\in[0,2\pi),
\end{equation}

\noindent where we have used a conformal mapping. However, a caveat should be make here. Indeed, unlike in the vortex patch problem, given a $V$-state $(z, \Omega)$, and $\mu > 0$, $(\mu z, \Omega)$ is no longer a $V$-state, but, from \eqref{e:QGcondition0a}, $(\mu z, \mu^{-\alpha}\Omega)$ is. Therefore, since we bifurcate from the unit circumference at a certain angular velocity $\Omega = \Omega_m^\alpha$, we always obtain, by uniqueness, the same $V$-states up to a scaling that implies also a modification on $\Omega$, irrespectively of the chosen numerical representation of $z$. An equivalent observation can be done for the doubly-connected case, etc.

\subsubsection{Numerical experiments}

According to Theorem 1 in \cite{H-H}, for fixed $\alpha\in(0,1)$ and $m\ge2$, there is a family of $m$-fold $V$-states bifurcating from the unit circumference at the angular velocity
\begin{equation}
\label{e:Oma0}
\Omega_m^\alpha\triangleq \frac{\Gamma(1 - \alpha)}{2^{1-\alpha}\Gamma^2(1 - \frac{\alpha}{2})} \left(\frac{\Gamma(1 + \frac{\alpha}{2})}{\Gamma(2 - \frac{\alpha}{2})} - \frac{\Gamma(m + \frac{\alpha}{2})}{\Gamma(m + 1 - \frac{\alpha}{2})}\right).
\end{equation}

\noindent Given an $\Omega$ slightly smaller than $\Omega_m^\alpha$, it is straightforward to obtain the corresponding $V$-state with the technique described above. Then, we can use that $V$-state as a new initial datum to obtain another $V$-state with smaller $\Omega$, and so on. However, it seems impossible to obtain numerically $V$-states for $\Omega$ strictly larger than $\Omega_m^\alpha$. This means that the bifurcation is pitchfork and this fact  follows from a symmetry argument: if $(\Omega,z)$ is a solution of \eqref{e:QGcondition0} then $(\Omega,-z)$ is a solution too.

Bearing in mind \eqref{e:Oma0}, we are able to obtain $V$-states for an arbitrary large number of symmetries $m$. For instance, in Figure \ref{f:VState0holeAlpha0_5Omegas}, we have plotted simultaneously the $V$-states corresponding to $\alpha = 0.5$, $m = 10$, for $\Omega = 0.5592$ and $\Omega = 0.556, 0.552, \ldots, 0.528$. In all the numerical experiments in this section, we take $N = 256\times m$ nodes. Since, according to \eqref{e:Oma0}, $\Omega_{10}^{0.5} = 0.559238\ldots$, the $V$-state corresponding to $\Omega = 0.5592$, in black, is practically a circumference, as expected. On the other hand, the $V$-state corresponding to $\Omega = 0.528$ is plotted in red. Observe that we have been unable to obtain the $V$-state corresponding to, say, $\Omega = 0.527$; this makes us wonder whether the $V$-state in red might be close from developing some kind of singularity.
\begin{figure}[!htb]
\center
\includegraphics[width=0.5\textwidth, clip=true]{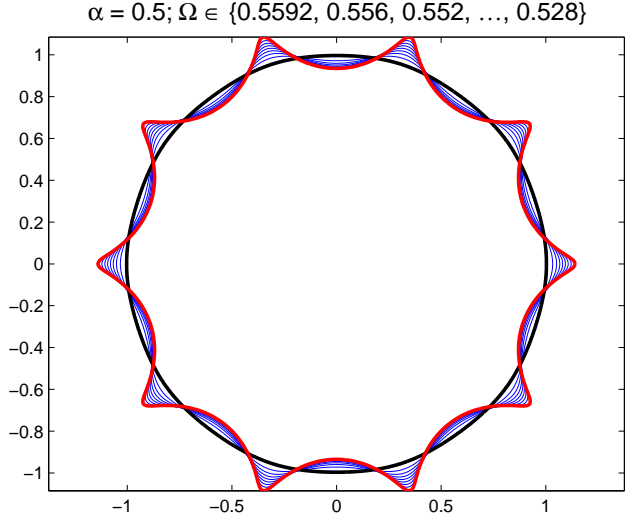}
\caption{\small{10-fold $V$-states corresponding to $\alpha = 0.5$, for different values of $\Omega$.}}
\label{f:VState0holeAlpha0_5Omegas}
\end{figure}

It is an established fact that simply-connected limiting $V$-states do exist for $\alpha = 0$, which corresponds to the vortex patch problem. These $V$-states are obtained after bifurcating from the circumference at $\Omega_m \triangleq (m - 1) / (2m)$, which corresponds to \eqref{e:Oma0} evaluated at $\alpha = 0$, and decreasing $\Omega$ as much as possible, until corner-shaped singularities appear. Furthermore, it has been proved in \cite{Over,WOZ} that the angle at the corners is always $\pi / 2$, irrespectively of the number $m$ of symmetries. Therefore, we are interested in understanding what happens when $\alpha > 0$.

In Figure \ref{f:limitingVstates}, we have plotted $V$-states corresponding to $m = 3, 4, 5, 6, 7$; for the vortex patch problem, and for $\alpha = 0.1$, $\alpha = 0.5$, and $\alpha = 0.9$ (i.e, a value of $\alpha$ rather close to zero, an intermediate value, and a value rather close to one). We have used the smallest possible (four-digit) values of $\Omega$, which are offered in Table \ref{t:LimitingOmegas}, in such a way that the experiments become numerically instable for smaller values of $\Omega$. In order to facilitate the comparison between different $m$, we have plotted $z(\theta) / \max_\theta(|z(\theta)|)$), rather than $z(\theta)$.

\begin{figure}[!htb]
\center
\includegraphics[width=0.5\textwidth, clip=true]{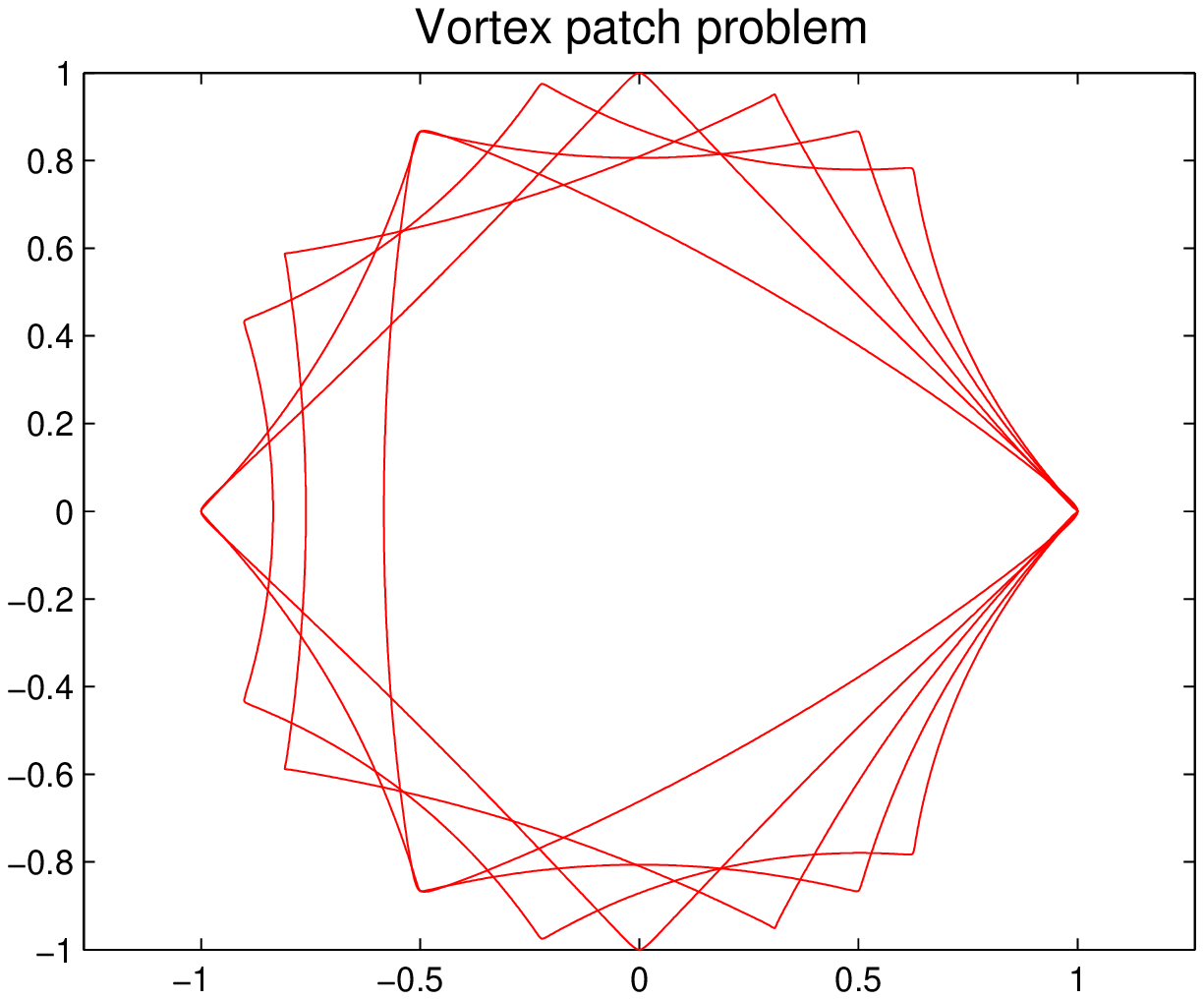}~
\includegraphics[width=0.5\textwidth, clip=true]{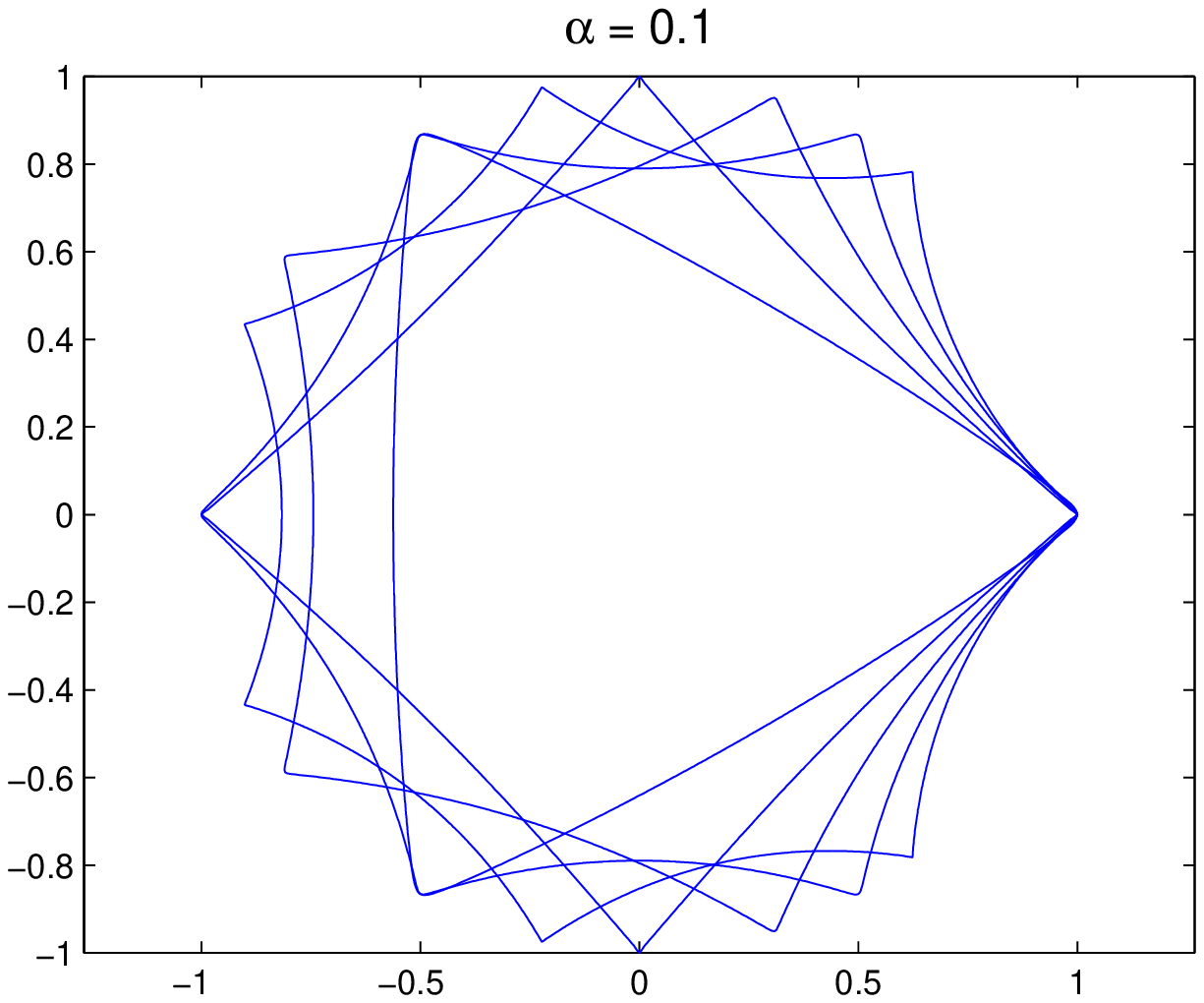}
\includegraphics[width=0.5\textwidth, clip=true]{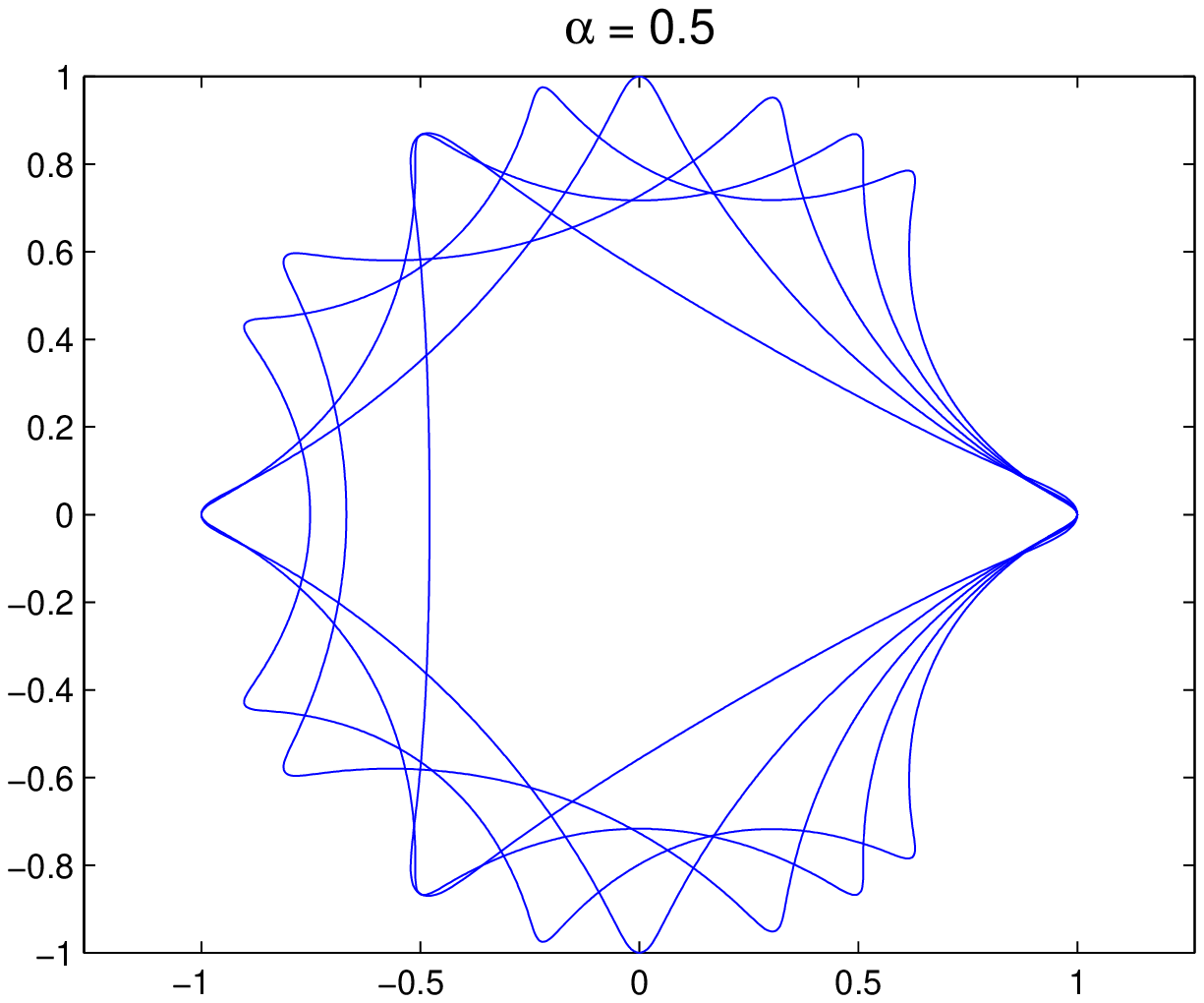}~
\includegraphics[width=0.5\textwidth, clip=true]{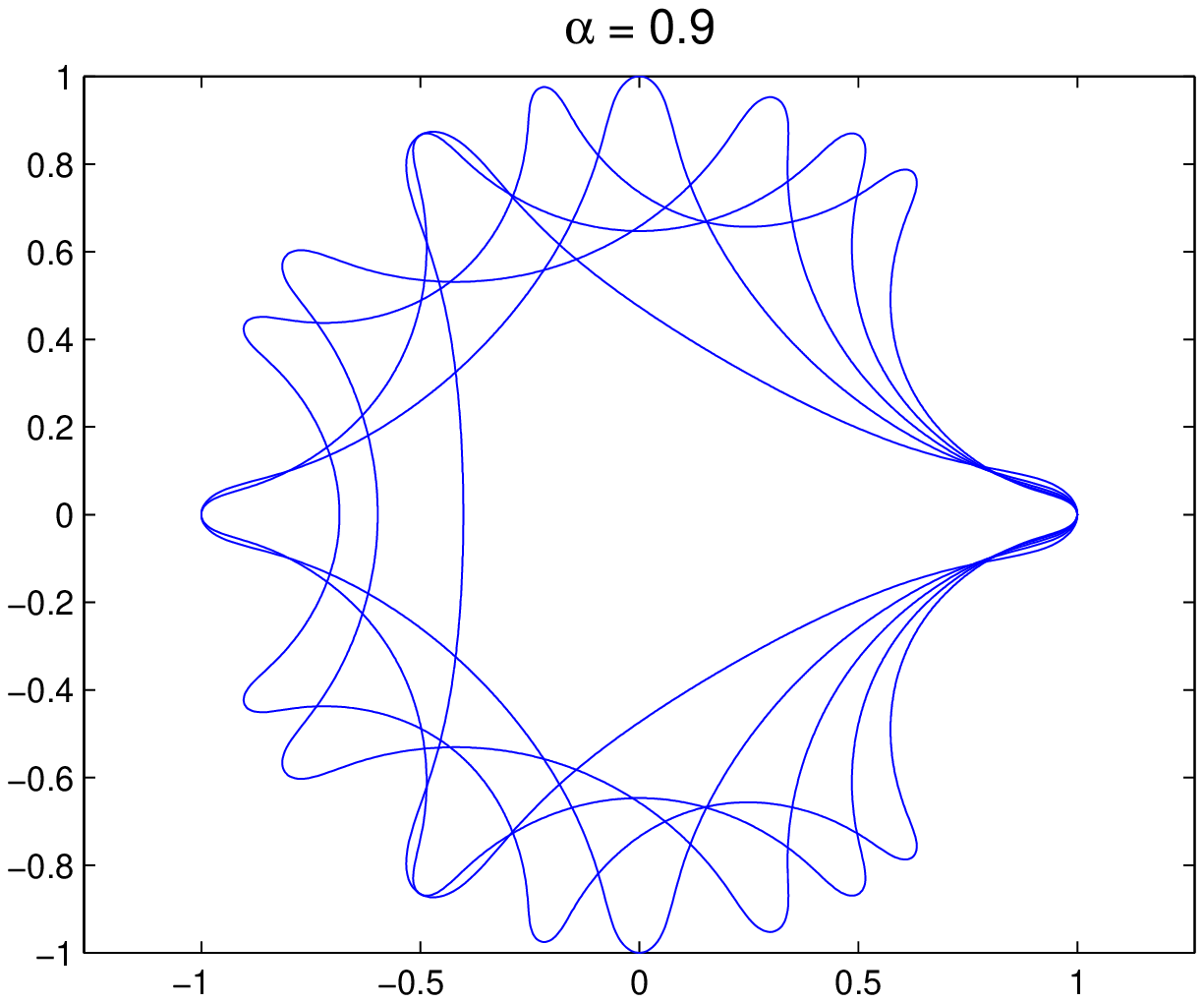}
\caption{\small{$V$-states corresponding to the vortex patch problem (i.e., $\alpha = 0$), and to $\alpha = 0.1$, $\alpha = 0.5$, and $\alpha = 0.9$; with $m = 3, 4, 5, 6, 7$. To facilitate the comparison between different $m$, we have plotted $z(\theta) / \max_\theta(|z(\theta)|)$, instead of $z(\theta)$. We have chosen the smallest possible (four-digit) values of $\Omega$, which are offered in Table \ref{t:LimitingOmegas}.
}}
\label{f:limitingVstates}
\end{figure}

\begin{table}[!htb]
$$
\begin{tabular}{|c|c|c|c|c|c|}
\hline
$\alpha$ / $m$ &3 & 4 & 5 & 6 & 7
    \\
\hline
0 & 0.3013 & 0.3540 & 0.3842 & 0.4040 & 0.4180
    \\
\hline
0.1 & 0.2965 & 0.3544 & 0.3884 & 0.4112 & 0.4275
    \\
\hline
0.5 & 0.2690 & 0.3498 & 0.4021 & 0.4399 & 0.4689
    \\
\hline
0.9 & 0.2191 & 0.3202 & 0.3918 & 0.4476 & 0.4933
    \\
\hline
\end{tabular}
$$
\caption{\small{Values of $\Omega$ for the $V$-states plotted in Figure \ref{f:limitingVstates}. The case $\alpha = 0$ corresponds to the vortex patch problem.}}
\label{t:LimitingOmegas}
\end{table}

Figure \ref{f:limitingVstates} confirms graphically that the angles developed by the five limiting $V$-states in the vortex patch problem are identical. On the other hand, when $\alpha = 0.1$, the $V$-states depicted are very similar to the limiting $V$-states in the vortex patch problem, whereas, for $\alpha = 0.5$, and especially for $\alpha = 0.9$, it is unclear whether any singularity has happened at all. In order to shed some light on this, we have plotted in Figure \ref{f:bifurcation} the respective bifurcation diagrams of $a_1$ in \eqref{e:z0cos} with respect to $\Omega$. In the vortex patch problem, we have a family of monotonic curves already shown in \cite{Saf}. When $\alpha = 0.1$, the curves are very similar to those in the vortex patch problem, but slightly bigger and more spaced. Then, as $\alpha$ grows, the curves become bigger and bigger, and more and more spaced. Furthermore, when $\alpha = 0.9$, the curves are partially superposed; for example, there are 2-fold and 3-fold $V$-states with the same $\Omega$. This phenomenon also happens in the last four curves, when $\alpha = 0.5$. However, the most striking fact from Figure \ref{f:bifurcation} is that all the fifteen curves, corresponding to $\alpha = 0.1$, $\alpha = 0.5$ and $\alpha = 0.9$, lose monotonicity at their left ends. In fact, especially for $\alpha = 0.9$, incipient hooks are clearly visible. This seems to suggest the presence of saddle-node bifurcation points (see for instance \cite{Kil}) at a certain $\Omega = \Omega_c$, which is indeed the case, as we will show in the following lines.

\begin{figure}[!htb]
\center
\includegraphics[width=0.5\textwidth, clip=true]{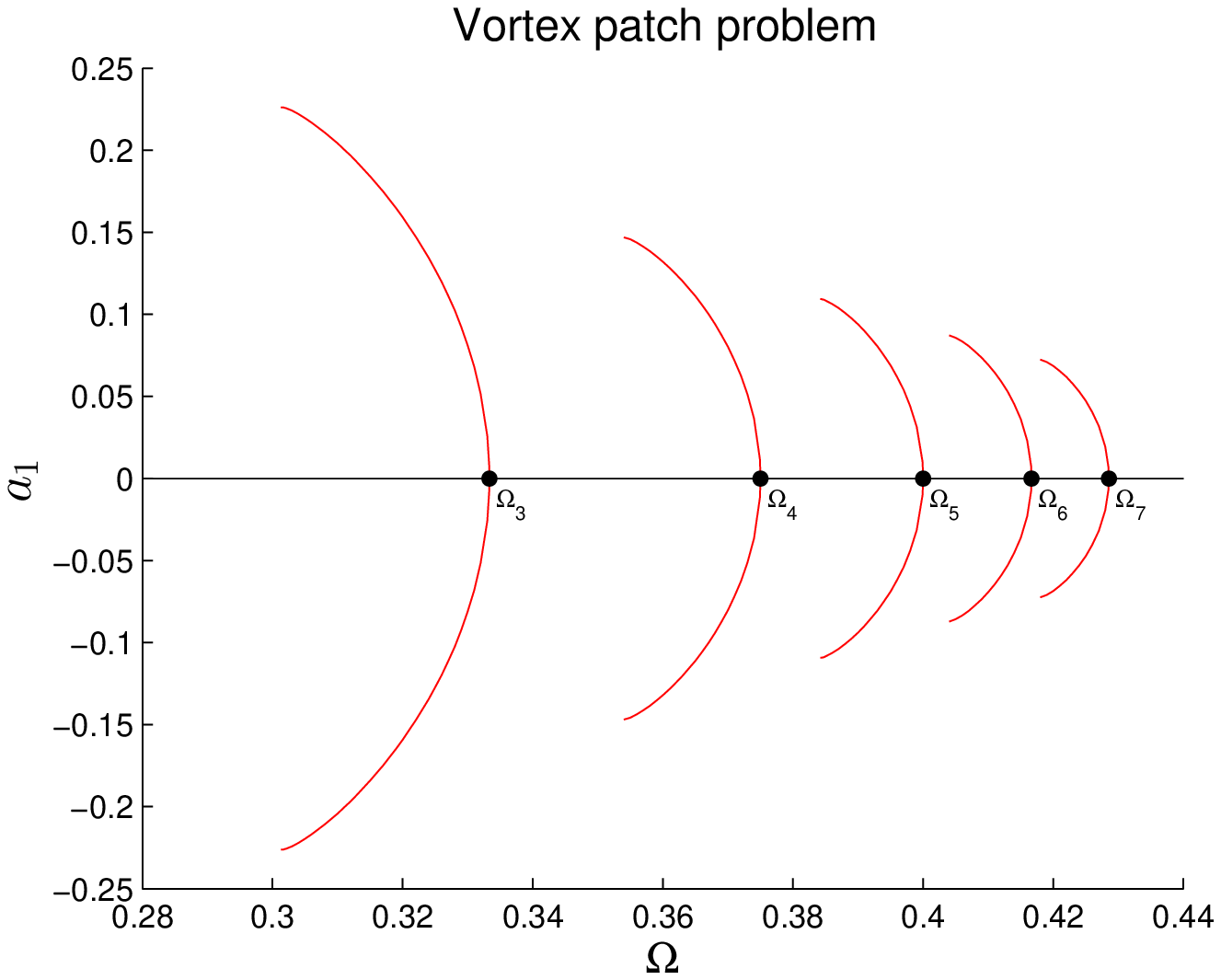}~
\includegraphics[width=0.5\textwidth, clip=true]{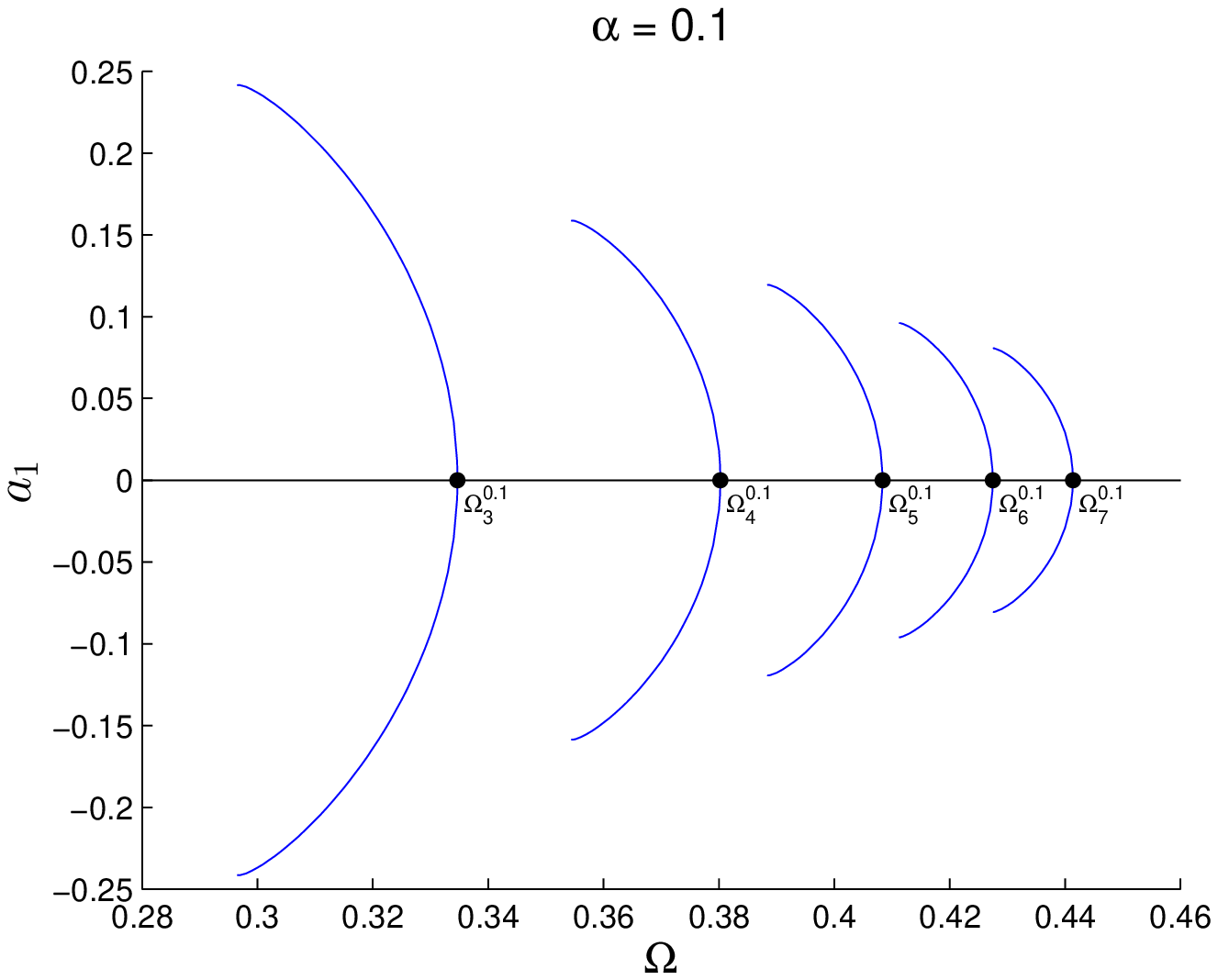}
\includegraphics[width=0.5\textwidth, clip=true]{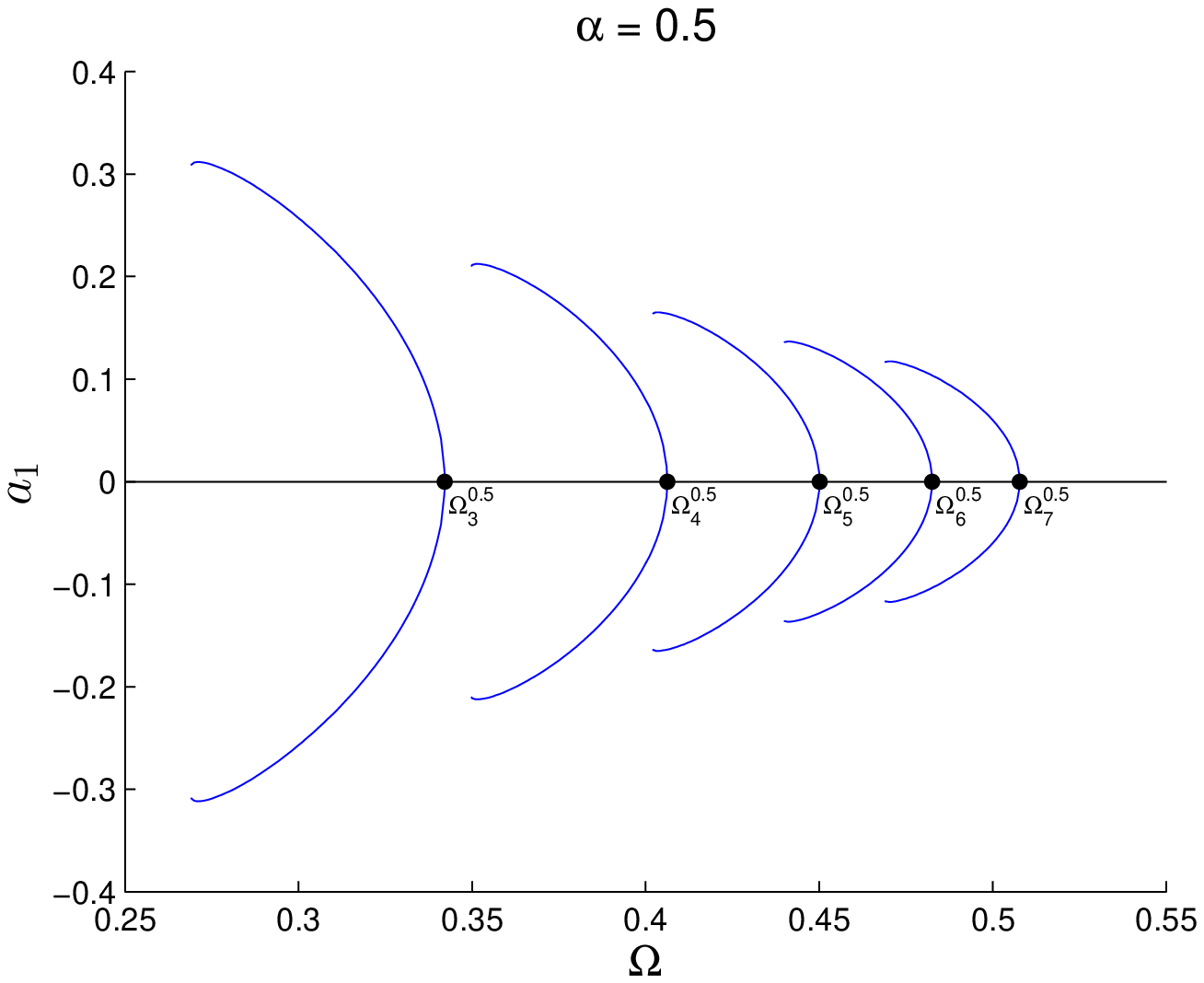}~
\includegraphics[width=0.5\textwidth, clip=true]{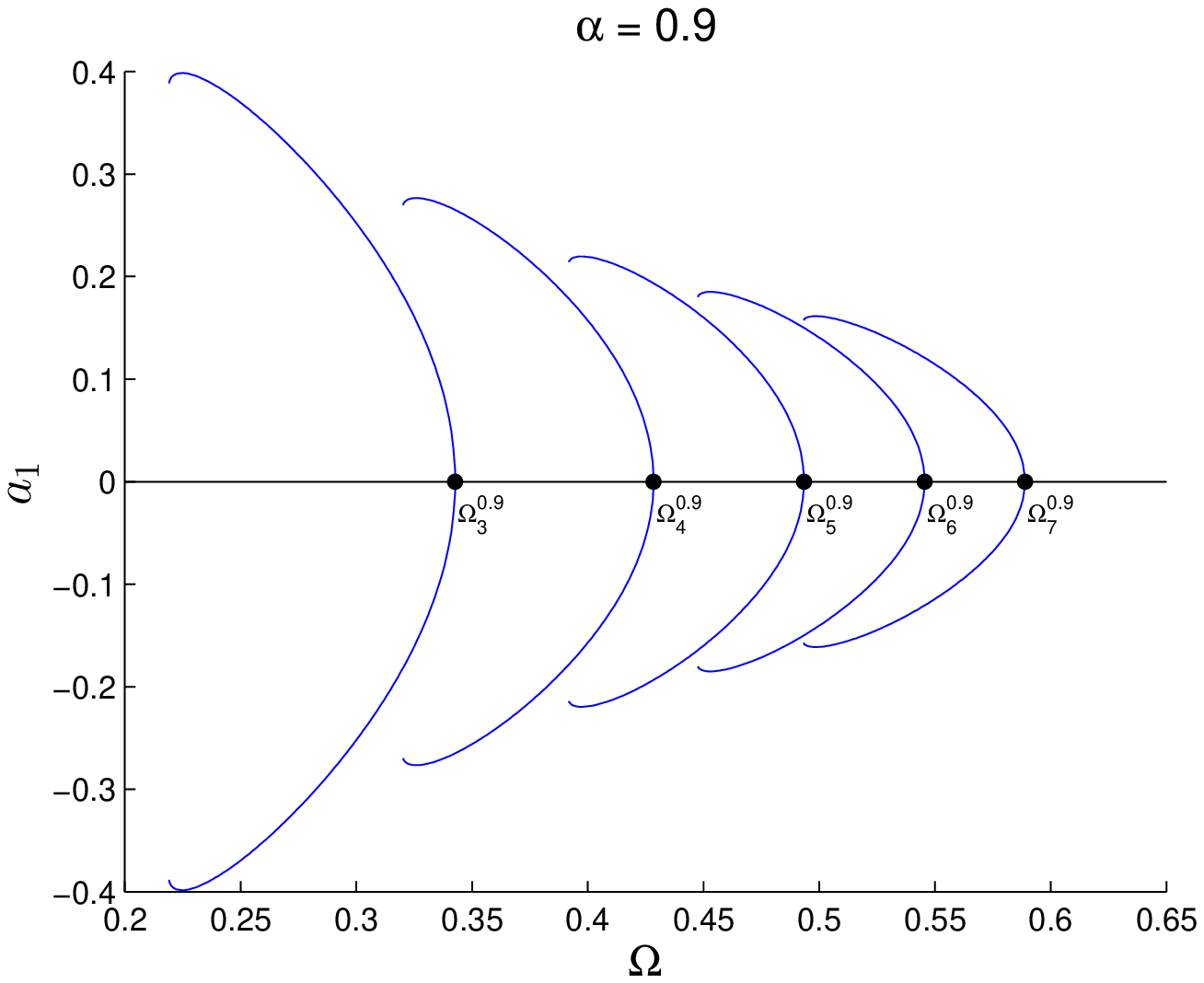}
\caption{\small{Bifurcations diagrams, for the vortex patch problem (i.e., $\alpha = 0$), and for $\alpha = 0.1$, $\alpha = 0.5$, and $\alpha = 0.9$; with $m = 3, 4, 5, 6, 7$.}}
\label{f:bifurcation}
\end{figure}

We work with the bifurcation curve corresponding to $\alpha = 0.9$ and $m = 3$ in Figure \ref{f:bifurcation}, because it has the most pronounced hook, but everything that follows is applicable to the other bifurcation curves as well. We need to estimate the corresponding $\Omega_c$ with enough accuracy. In our case, we have taken $\Omega_c = 0.21904$. Then, given $0<\epsilon\ll1$ (we have taken here $\epsilon = 10^{-4}$), we calculate the $V$-states corresponding to $\Omega^{(A)}\equiv\Omega_c + 4\epsilon$, $\Omega^{(B)}\equiv\Omega_c + 3\epsilon$, $\Omega^{(C)}\equiv\Omega_c + 2\epsilon$, $\Omega^{(D)}\equiv\Omega_c + \epsilon$, and $\Omega^{(E)}\equiv\Omega_c$, whose coefficients in \eqref{e:z0cos} are respectively denoted as $\{a_k^{(A)}\}$, $\{a_k^{(B)}\}$, $\{a_k^{(C)}\}$, $\{a_k^{(D)}\}$, and $\{a_k^{(E)}\}$. The main idea is to introduce a new parameter $\lambda$, instead of $\Omega$, in such a way that $\lambda = \lambda^{(A)}$ corresponds to $\Omega = \Omega^{(A)}$, and so on. We set $\lambda^{(A)} = 0$, $\lambda^{(B)} = \lambda^{(A)} + [(\Omega^{(B)} - \Omega^{(A)})^2 + (a_1^{(B)} - a_1^{(A)})^2]^{1/2}$, $\lambda^{(C)} = \lambda^{(B)} + [(\Omega^{(C)} - \Omega^{(B)})^2 + (a_1^{(C)} - a_1^{(B)})^2]^{1/2}$, $\lambda^{(D)} = \lambda^{(C)} + [(\Omega^{(D)} - \Omega^{(C)})^2 + (a_1^{(D)} - a_1^{(C)})^2]^{1/2}$, $\lambda^{(E)} = \lambda^{(D)} + [(\Omega^{(E)} - \Omega^{(D)})^2 + (a_1^{(E)} - a_1^{(D)})^2]^{1/2}$. Therefore, our problem is reduced to finding the $V$-state corresponding to some $\lambda$ slightly larger than $\lambda^{(E)}$; and a fairly good initial guess for that $V$-state can be obtained by means of a four-degree Lagrange interpolation polynomial. More precisely, let us define \begin{equation}
c^{(A)} = \frac{(\lambda - \lambda^{(B)})(\lambda - \lambda^{(C)})(\lambda - \lambda^{(D)})(\lambda - \lambda^{(E)})}{(\lambda^{(A)} - \lambda^{(B)})(\lambda^{(A)} - \lambda^{(C)})(\lambda^{(A)} - \lambda^{(D)})(\lambda^{(A)} - \lambda^{(E)})},
\end{equation}

\noindent and, in a similar way, $c^{(B)}$, $c^{(C)}$, $c^{(D)}$, and $c^{(E)}$. Then,
\begin{align}
\label{e:omega(lambda)}\Omega(\lambda) & = c^{(A)}\Omega^{(A)} + c^{(B)}\Omega^{(B)} + c^{(C)}\Omega^{(C)} + c^{(D)}\Omega^{(D)} + c^{(E)}\Omega^{(E)},
    \\
a_k(\lambda) & = c^{(A)}a_k^{(A)} + c^{(B)}a_k^{(B)} + c^{(C)}a_k^{(C)} + a_k\Omega^{(D)} + a_k\Omega^{(E)}, \quad k = 1,\ldots, M.
\end{align}

\noindent Remark that a couple of trials may be needed until a good choice of $\Omega^{(A)},\ldots$, and of $\lambda$ is found, i.e., values that enable us to continue the bifurcation curve, and not to come back to some already known $V$-state. In our case, we have chosen $\lambda$ equal to $\lambda^{(E)}$ plus the mean of the four previous increments of $\lambda$, i.e.,
\begin{align}
\lambda & = \lambda^{(E)} + \frac{(\lambda^{(E)} - \lambda^{(D)}) + (\lambda^{(D)} - \lambda^{(C)}) + (\lambda^{(C)} - \lambda^{(B)}) + (\lambda^{(B)} - \lambda^{(A)})}{4}
    \cr
    & = \frac{5\lambda^{(E)} - \lambda^{(A)}}{4}.
\end{align}

\noindent After applying this technique just once, we have successfully obtained a $V$-state corresponding to $\Omega = 0.219054\ldots$, i.e., a $V$-state beyond the critical point. It may be useful (and sometimes even convenient) to iterate several times the procedure, after updating $\lambda^{(A)} = \lambda^{(B)}$, $\lambda^{(B)} = \lambda^{(C)}$,$\lambda^{(C)} = \lambda^{(C)}$, $\lambda^{(C)} = \lambda^{(D)}$, and $\lambda^{(E)} = \lambda$. In fact, it can be even applied from the very beginning, to obtain all the bifurcation curves in \ref{f:bifurcation} in their integrity, with an important spare of computational time. In Figure \ref{f:bifurcation3updated}, we plot on the left-hand side the completed bifurcation curve until $\Omega = 0.236$; the piece of curve beyond the saddle-node bifurcation point, absent in Figure \ref{f:bifurcation}, is shown in thicker stroke.

It is possible to still continue the bifurcation curve, although the results are to be taken with prudence, because higher spatial resolution is needed. Further numerical experiments, which include the use of alternative parameterizations of $z$, would suggest the eventual formation of cusp-shaped singularities at the corners. They would also suggest the presence of additional saddle-node bifurcation points, in such a way that the bifurcation curve in \ref{f:bifurcation3updated} would show spiral-like structures at its ends. Nevertheless, since our results are still inconclusive, we postpone this challenging issue for the future.
\begin{figure}[!htb]
\center
\includegraphics[width=0.5\textwidth, clip=true]{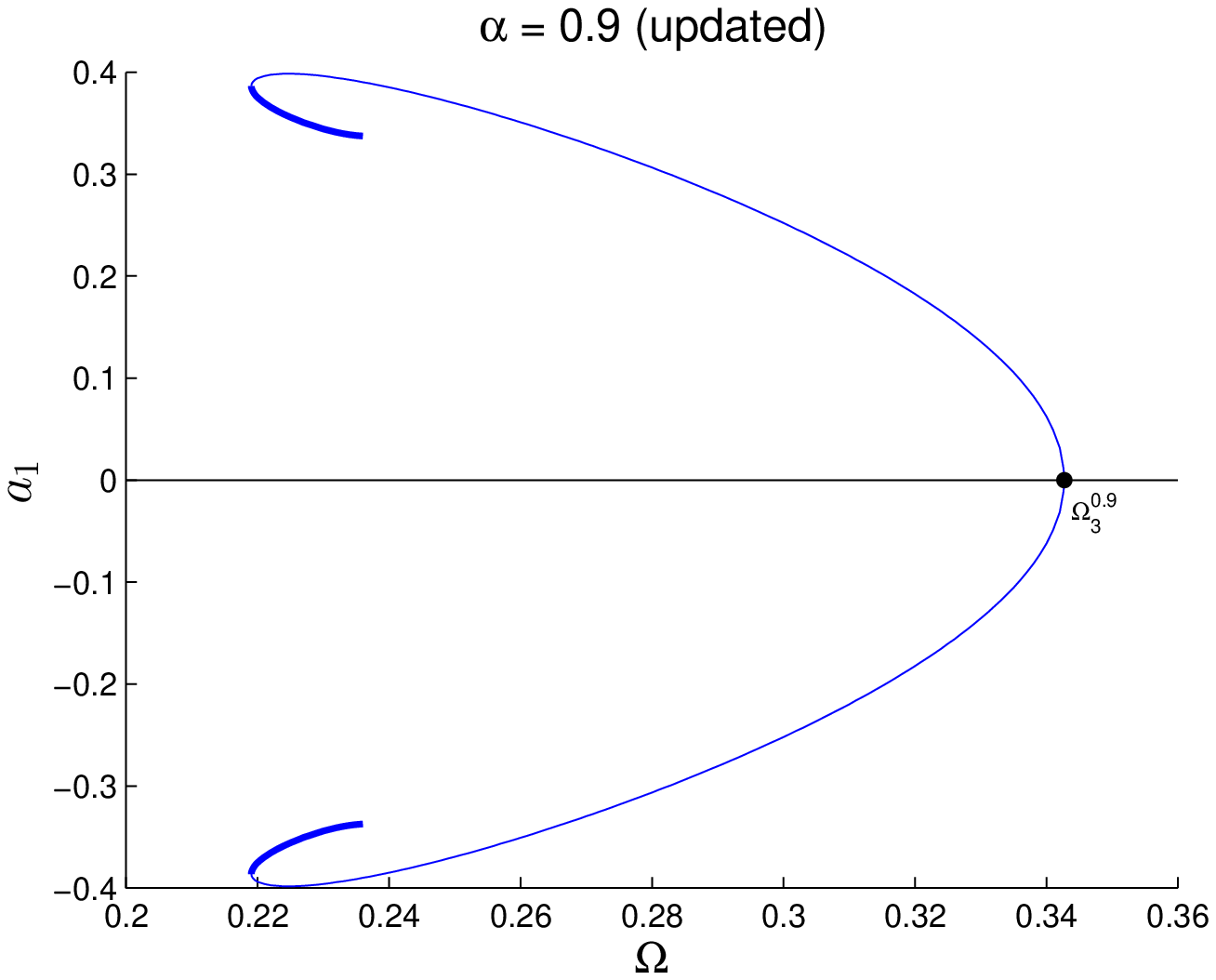}~
\includegraphics[width=0.5\textwidth, clip=true]{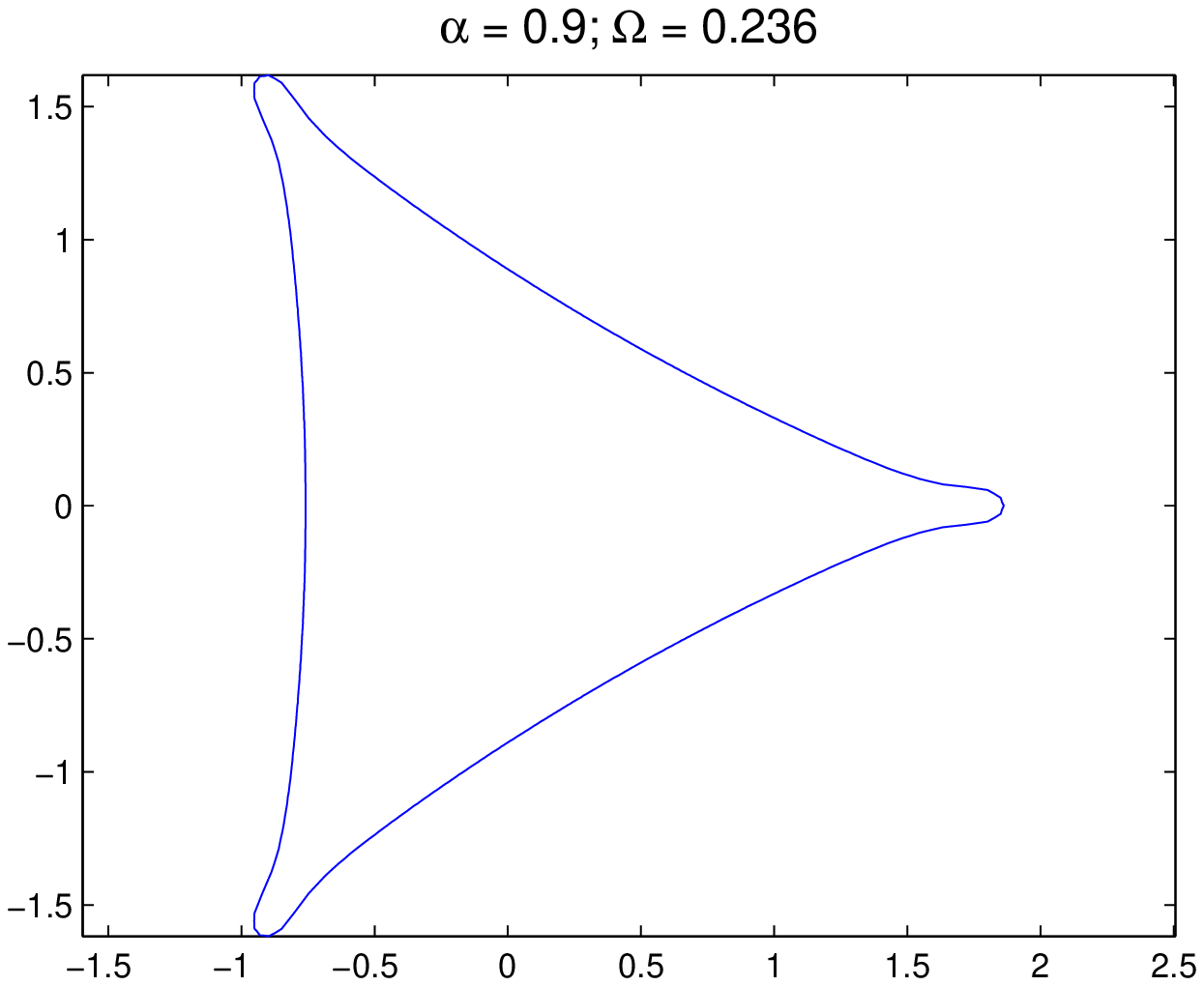}
\caption{\small{Left: Extended bifurcation curve, for $m = 3$, and $\alpha = 0.9$. Right: $V$-state beyond the saddle-node bifurcation point.}}
\label{f:bifurcation3updated}
\end{figure}

On the other hand, based on the previous pages and on additional numerical experiments that we have carried on, we conjecture the existence of saddle-node bifurcation points for all $m\ge3$ and for all $\alpha\in(0,1)$. However, as $\alpha$ decreases, smaller and smaller structures are expected at the ends of the bifurcation curves, until $\alpha = 0$, when they disappear. Indeed, in the vortex patch problem, as mentioned above, the bifurcation curves are always monotonic.

We cannot finish this section, without saying something about the case $m = 2$, which has a pretty different behavior and is interesting per se. In Figure \ref{f:VState0holen2Alpha0_01Omegas}, we have plotted 2-fold $V$-states for $\alpha = 0.01$ and different values of $\Omega$, starting from $\Omega = 0.2496$, which is close to $\Omega_2^{0.01} = 0.249667\ldots$, so the corresponding $V$-state, in black, is practically a unit circumference. Since $\alpha$ is small, we might expect to have a similar behavior to that in the vortex patch problem, where the $V$-states tend to degenerate to a segment as $\Omega$ decreases. However, although this is true for $\Omega$ close enough to $\Omega_2^{0.01}$, there is an instant when convexity is lost, and the $V$-states get a more and more pronounced $\infty$-shape, as $\Omega$ decreases. Remark that $\Omega$ cannot be smaller than a certain value, which corresponds approximately to $\Omega = 0.1057$, and whose corresponding $V$-state is plotted in red. Let us mention that the situation is very similar for greater $\alpha$, even for those close to 1.

\begin{figure}[!htb]
\center
\includegraphics[width=0.5\textwidth, clip=true]{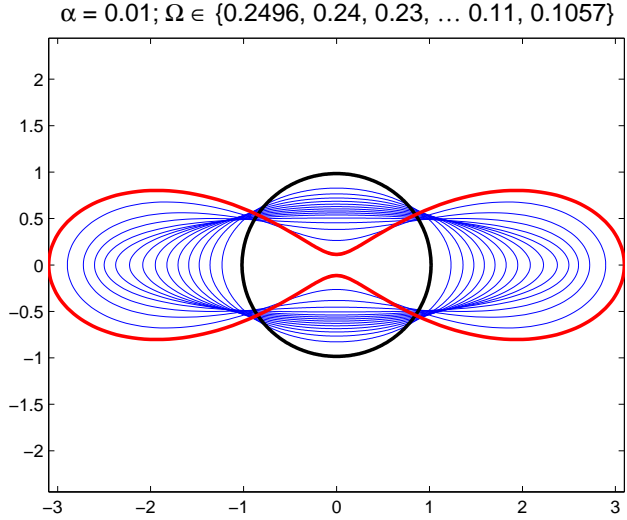}
\caption{\small{2-fold $V$-states corresponding to $\alpha = 0.2$, for different values of $\Omega$.}}
\label{f:VState0holen2Alpha0_01Omegas}
\end{figure}

In Figure \ref{f:VState0holen2Alpha0_01Omegas}, the $V$-state in red seems to have developed no singularity. Again, insight into what is happening is given by the bifurcation curve of $a_1$ in \eqref{e:z0cos} with respect to $\Omega$, for $\alpha = 0.01$, which is plotted in Figure \eqref{e:z0cos}. In that figure, we have also plotted the bifurcation curves for $\alpha = 0.1, 0.2, 0.3$, being the four curves very similar to each other.

\begin{figure}[!htb]
\center
\includegraphics[width=0.5\textwidth, clip=true]{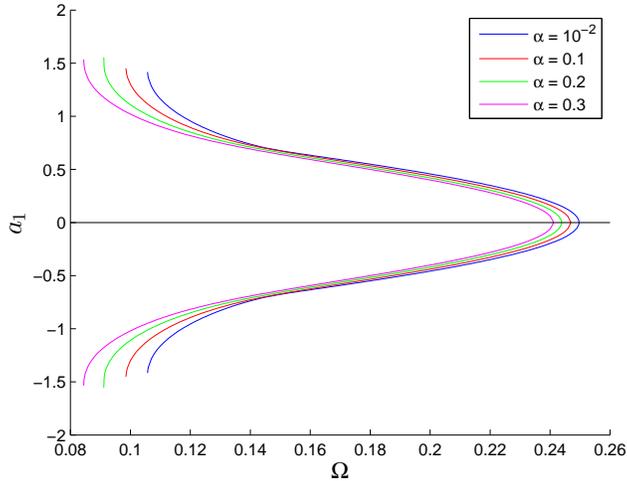}
\caption{\small{Bifurcation curves, for $m = 2$, and $\alpha = 0.01, 0.1, 0.2, 0.3$. We plot the value of the first coefficient $a_1$ in \eqref{e:z0cos} with respect to $\Omega$. The right-most curve corresponds to the smallest $\alpha$, and so on.}}
\label{f:bifurcation2}
\end{figure}

As with $m\ge3$, the bifurcation curves suggest the existence of saddle-node bifurcation points. To see whether this is indeed the case, we have used the previously described continuation method for $\alpha = 0.01$, taking $\Omega_c = 0.10567$, and $\epsilon = 10^{-4}$. Figure \ref{f:bifurcation2updated} confirms our suspicions. On the left-hand side, we plot the completed bifurcation curve until $\Omega = 0.1061$; the piece of curve beyond the saddle-node bifurcation point, absent in Figure \ref{f:bifurcation2}, is shown in thicker stroke. Remark that, unlike Figure \ref{f:bifurcation3updated}, a zoom is necessary in order for the hook to be appreciated. On the right-hand side, we plot the new $V$-state corresponding to a slightly larger $\Omega$, i.e., $\Omega = 0.1061407$ (and such that $\Omega = 0.1061408$ is unstable), with twice as many nodes, i.e, $N = 512\times2$. Apparently, a self-intersection has happened, although a powerful zoom shows that the distance between the two inner pieces of curve is approximately $5.9\times10^{-3}$; moreover, there are apparently enough nodes in that region, so it seems that we could decrease that distance even further, by increasing the eight decimal of $\Omega$, and so on. Even if we rather think that a self-intersection will eventually occur (see \cite{Cerr} and \cite{Luz} for similar phenomena in the vortex patch problem), further study is required here.

\begin{figure}[!htb]
\center
\includegraphics[width=0.5\textwidth, clip=true]{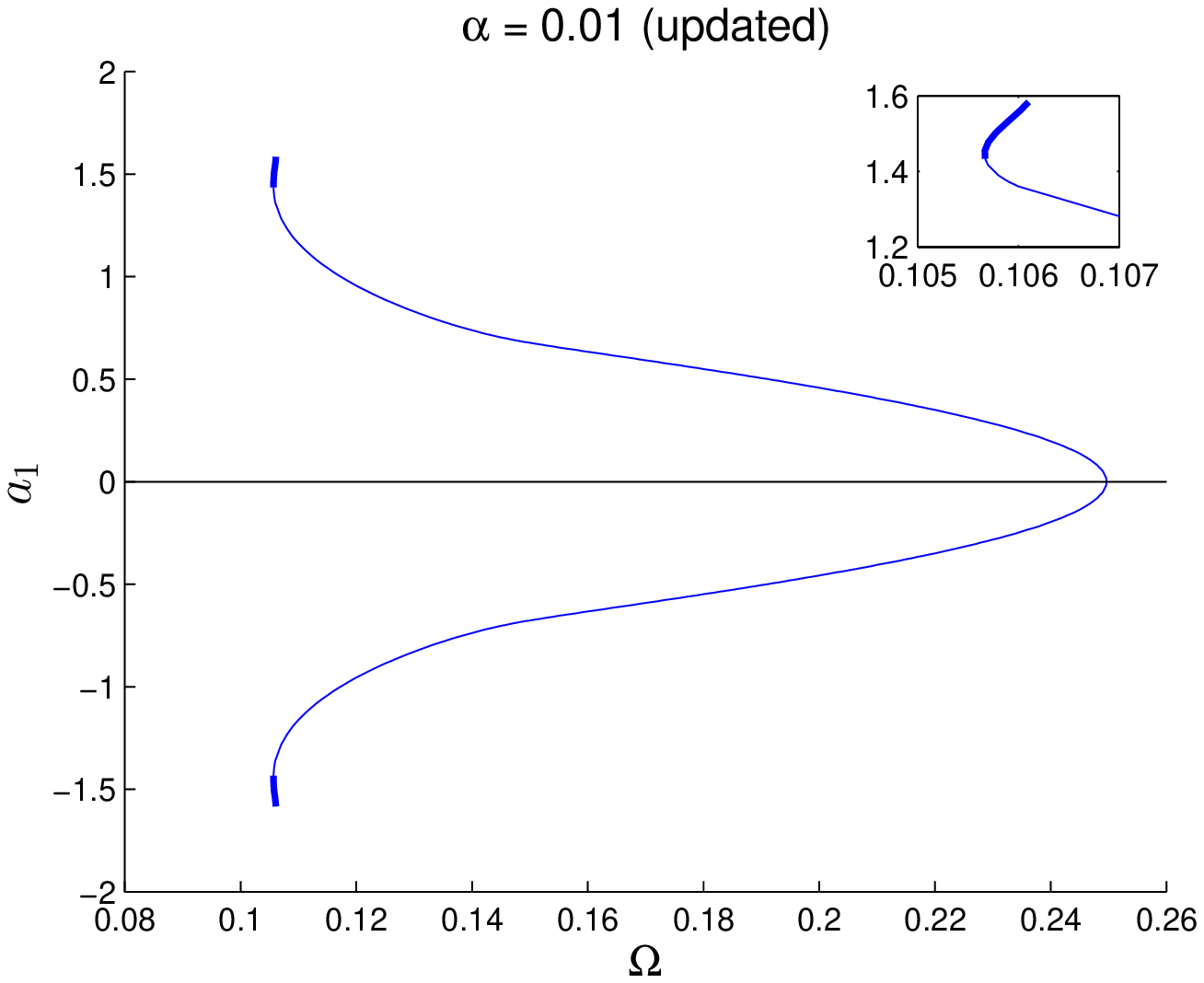}~
\includegraphics[width=0.5\textwidth, clip=true]{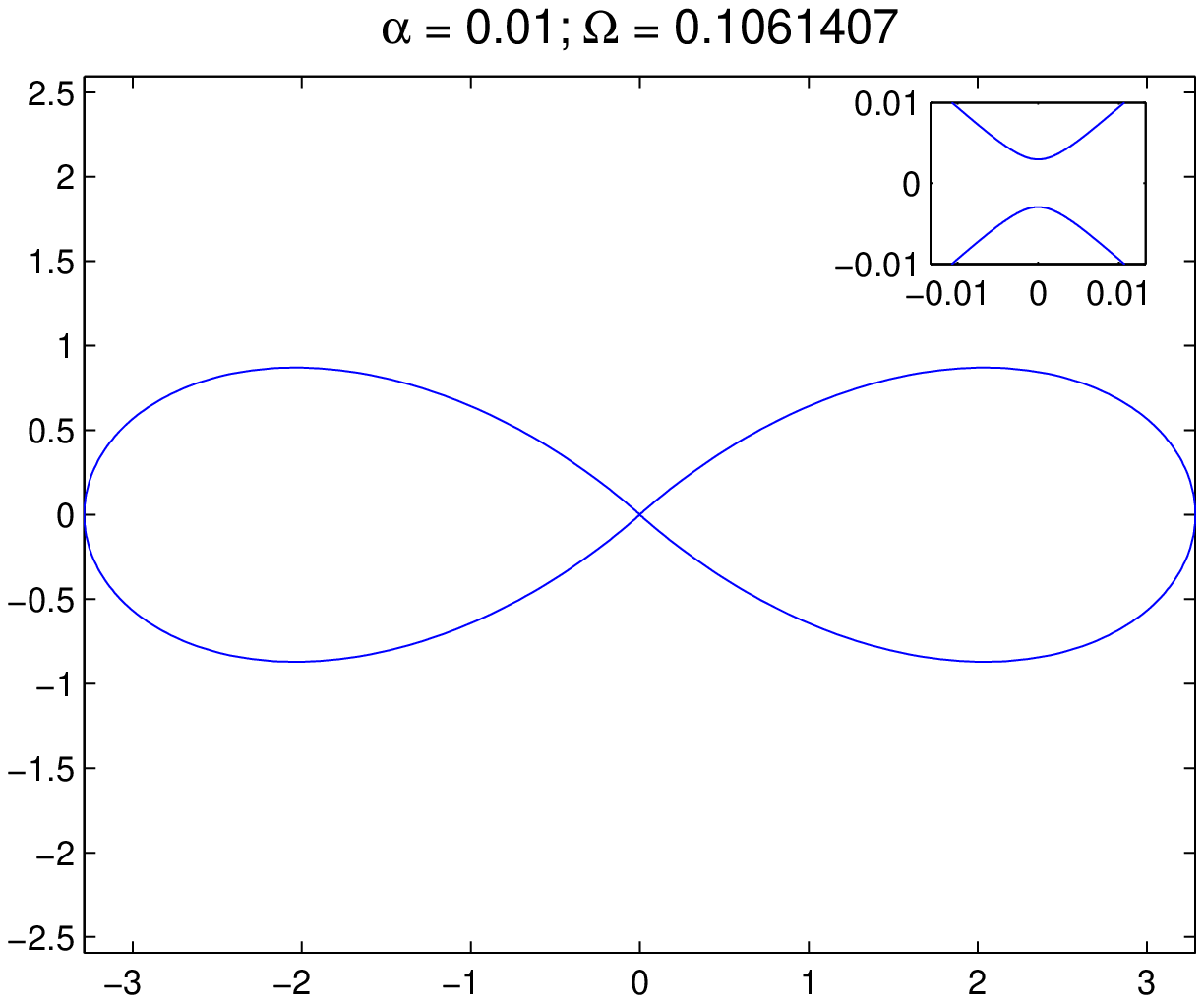}
\caption{\small{Left: Extended bifurcation curve, for $m = 2$, and $\alpha = 0.01$. Right: $V$-state beyond the saddle-node bifurcation point. A self-intersection has almost occurred.}}
\label{f:bifurcation2updated}
\end{figure}

\subsection{Doubly-connected $V$-states}

Given a doubly-connected domain $D$ with outer boundary $z_1(\theta)$ and inner boundary $z_2(\theta)$, where $\theta\in[0,2\pi)$ is the Lagrangian parameter, and $z_1$ and $z_2$ are counterclockwise parameterized, the contour dynamics equations for the quasi-geostrophic problem are
\begin{equation}
\begin{split}
z_{1,t}(\theta, t) & = \frac{C_\alpha}{2\pi}\int_0^{2\pi}\frac{z_{1,\phi}(\phi,t)d\phi}{|z_1(\phi,t) - z_1(\theta,t)|^\alpha} - \frac{C_\alpha}{2\pi}\int_0^{2\pi}\frac{z_{2,\phi}(\phi,t)d\phi}{|z_2(\phi,t) - z_1(\theta,t)|^\alpha},
    \\
z_{2,t}(\theta, t) & = \frac{C_\alpha}{2\pi}\int_0^{2\pi}\frac{z_{1,\phi}(\phi,t)d\phi}{|z_1(\phi,t) - z_2(\theta,t)|^\alpha} - \frac{C_\alpha}{2\pi}\int_0^{2\pi}\frac{z_{2,\phi}(\phi,t)d\phi}{|z_2(\phi,t) - z_2(\theta,t)|^\alpha};
\end{split}
\end{equation}

\noindent where, as in the simply-connected case, we will omit the explicit dependence on $t$.

The doubly-connected domain $D$ is a $V$-state if and only if its boundaries satisfy the following equations:
\begin{align}
\label{e:QGcondition1}
\operatorname{Re}\bigg[\bigg(\Omega z_1(\theta) - \frac{C_\alpha}{2\pi i}\int_0^{2\pi}\frac{z_{1,\phi}(\phi)d\phi}{|z_1(\phi) - z_1(\theta)|^\alpha} + \frac{C_\alpha}{2\pi i}\int_0^{2\pi}\frac{z_{2,\phi}(\phi)d\phi}{|z_2(\phi) - z_1(\theta)|^\alpha}\bigg)\overline{z_{1,\theta}(\theta)}\bigg] & = 0,
    \\
\label{e:QGcondition2}
\operatorname{Re}\bigg[\bigg(\Omega z_2(\theta) - \frac{C_\alpha}{2\pi i}\int_0^{2\pi}\frac{z_{1,\phi}(\phi)d\phi}{|z_1(\phi) - z_2(\theta)|^\alpha} + \frac{C_\alpha}{2\pi i}\int_0^{2\pi}\frac{z_{2,\phi}(\phi)d\phi}{|z_2(\phi) - z_2(\theta)|^\alpha}\bigg)\overline{z_{2,\theta}(\theta)}\bigg] & = 0.
\end{align}

\noindent However, as we did in \eqref{e:QGcondition0a}, it is convenient to rewrite them in the following equivalent form:
\begin{align}
\label{e:QGcondition1a}
\operatorname{Re}\bigg[\bigg(\Omega z_1(\theta) & - \frac{C_\alpha}{2\pi i}\int_0^{2\pi}\frac{(z_{1,\phi}(\phi) - z_{1,\theta}(\theta))d\phi}{|z_1(\phi) - z_1(\theta)|^\alpha}
    \cr
& + \frac{C_\alpha}{2\pi i}\int_0^{2\pi}\frac{z_{2,\phi}(\phi)d\phi}{|z_2(\phi) - z_1(\theta)|^\alpha}\bigg)\overline{z_{1,\theta}(\theta)}\bigg] = 0,
    \\
\label{e:QGcondition2a}
\operatorname{Re}\bigg[\bigg(\Omega z_2(\theta) & - \frac{C_\alpha}{2\pi i}\int_0^{2\pi}\frac{z_{1,\phi}(\phi)d\phi}{|z_1(\phi) - z_2(\theta)|^\alpha}
    \cr
& + \frac{C_\alpha}{2\pi i}\int_0^{2\pi}\frac{(z_{2,\phi}(\phi) - z_{2,\theta}(\theta))d\phi}{|z_2(\phi) - z_2(\theta)|^\alpha}\bigg)\overline{z_{2,\theta}(\theta)}\bigg] = 0.
\end{align}

\noindent We use again a pseudo-spectral method to find $V$-states. We discretize $\theta\in[0,2\pi)$ in $N$ equally spaced nodes $\theta_i = 2\pi i/N$, $i = 0, 1, \ldots, N-1$. Since $z_1$ and $z_2$ never intersect, the second integral in \eqref{e:QGcondition1a} and the first integral in \eqref{e:QGcondition2a} can be evaluated numerically with spectral accuracy at a node $\theta = \theta_i$ by means of the trapezoidal rule; e.g.,
\begin{equation}
\frac{1}{2\pi}\int_0^{2\pi}\frac{z_{2,\phi}(\phi)d\phi}{|z_2(\phi) - z_1(\theta_i)|^\alpha} \approx \frac{1}{N}\sum_{j = 0}^{N-1}\frac{z_{2,\phi}(\phi_j)}{|z_2(\phi_j) - z_1(\theta_i)|^\alpha}.
\end{equation}

\noindent Likewise, the first integral in \eqref{e:QGcondition1a} and the second integral in \eqref{e:QGcondition2a} can also be evaluated with spectral accuracy, reasoning as in \eqref{e:limit0}; e.g.,
\begin{equation}
\frac{1}{2\pi}\int_0^{2\pi}\frac{(z_{1,\phi}(\phi) - z_{1,\theta}(\theta_i))d\phi}{|z_1(\phi) - z_1(\theta_i)|^\alpha}
\approx \frac{1}{N}\mathop{\sum_{j = 0}^{N-1}}_{j\not=i}\frac{z_{1,\phi}(\phi_j) - z_{1,\theta}(\theta_i)}{|z_1(\phi_j) - z_1(\theta_i)|^\alpha}.
\end{equation}

\noindent Therefore, for $N$ large enough, we can evaluate with spectral accuracy all the terms in \eqref{e:QGcondition1a}-\eqref{e:QGcondition2a} at $\theta = \theta_i$.

In order to obtain doubly connected $m$-fold $V$-states, we approximate $z_1$ and $z_2$ as in \eqref{e:z0cos}:
\begin{equation}
\label{e:z1z2cos}
z_1(\theta) = e^{i\theta}\left[1 + \sum_{k = 1}^M a_{1,k}\cos(m\,k\,\theta)\right], \qquad
z_2(\theta) = e^{i\theta}\left[b + \sum_{k = 1}^M a_{2,k}\cos(m\,k\,\theta)\right],
\end{equation}

\noindent where $\theta\in[0,2\pi)$, the mean outer and inner radii are respectively $1$ and $b$; and we are imposing that $z_1(-\theta) = \overline{ z_1(\theta)}$ and $z_2(-\theta) = \overline{z_2(\theta)}$, i.e., we are looking for $V$-states symmetric with respect to the $x$-axis. Again, if we choose $N$ of the form $N = m2^r$, then $M = \lfloor (m2^r-1)/(2m)\rfloor = 2^{r-1}-1$.

We introduce \eqref{e:z1z2cos} into \eqref{e:QGcondition1a}-\eqref{e:QGcondition2a}, and approximate the error in those equations by an $M$-term sine expansion:
\begin{equation}
\label{e:V-Stateconditions}
\begin{split}
\operatorname{Re}\bigg[\bigg(\Omega z_1(\theta) & - \frac{C_\alpha}{2\pi i}\int_0^{2\pi}\frac{(z_{1,\phi}(\phi) - z_{1,\theta}(\theta))d\phi}{|z_1(\phi) - z_1(\theta)|^\alpha}
    \cr
& + \frac{C_\alpha}{2\pi i}\int_0^{2\pi}\frac{z_{2,\phi}(\phi)d\phi}{|z_2(\phi) - z_1(\theta)|^\alpha}\bigg)\overline{z_{1,\theta}(\theta)}\bigg] = \sum_{k = 1}^M b_{1,k}\sin(m\,k\,\theta),
    \\
\operatorname{Re}\bigg[\bigg(\Omega z_2(\theta) & - \frac{C_\alpha}{2\pi i}\int_0^{2\pi}\frac{z_{1,\phi}(\phi)d\phi}{|z_1(\phi) - z_2(\theta)|^\alpha}
    \cr
& + \frac{C_\alpha}{2\pi i}\int_0^{2\pi}\frac{(z_{2,\phi}(\phi) - z_{2,\theta}(\theta))d\phi}{|z_2(\phi) - z_2(\theta)|^\alpha}\bigg)\overline{z_{2,\theta}(\theta)}\bigg] = \sum_{k = 1}^M b_{2,k}\sin(m\,k\,\theta).
\end{split}
\end{equation}

\noindent As in \eqref{e:FaO}, this last system of equations can be represented in a very compact way as
\begin{equation}
\label{e:FbaO}
\mathcal F_{b,\alpha,\Omega}(a_{1,1}, \ldots, a_{1,M}\,,\, a_{2,1}, \ldots, a_{2,M}) = (b_{1,1}, \ldots, b_{1,M}\,,\, b_{2,1}, \ldots, b_{2,M}),
\end{equation}

\noindent for a certain $\mathcal F_{\alpha,\Omega}\ : \ \mathbb{R}^{2M}\to\mathbb{R}^{2M}$.

Remark that, for any value of the parameters $b$, $\alpha$ and $\Omega$, we have trivially $\mathcal F_{b,\alpha,\Omega}(\mathbf 0) = \mathbf 0$, i.e., any circular annulus is a solution of the problem. Therefore, the obtention of a doubly-connected $V$-state is reduced to finding numerically $\{a_{1,k}\}$ and $\{a_{2,k}\}$, such that $(a_{1,1}, \ldots, a_{1,M}\,,\, a_{2,1}, \ldots, a_{2,M})$ is a nontrivial root of \eqref{e:FbaO}. To do so, we discretize the $(2M\times2M)$-dimensional Jacobian matrix $\mathcal J$ of $\mathcal F_{b,\alpha,\Omega}$ as in \eqref{e:derivative0}, taking $h = 10^{-9}$:
\begin{equation}
\label{e:derivative}
\begin{split}
\frac{\partial}{\partial a_{1,1}} & \mathcal F_{b,\alpha,\Omega}(a_{1,1}, \ldots, a_{1,M}\,,\, a_{2,1}, \ldots, a_{2,M})
    \cr
& \approx \frac{\mathcal F_{b,\alpha,\Omega}(a_{1,1} + h, a_{1,2}, \ldots, a_{1,M}\,,\, a_{2,1}, \ldots, a_{2,M}) - \mathcal F_{b,\alpha,\Omega}(a_{1,1}, \ldots, a_{1,M}\,,\, a_{2,1}, \ldots, a_{2,M})}{h},
\end{split}
\end{equation}

\noindent Then, the sine expansion of \eqref{e:derivative} gives us the first row of $\mathcal J$, and so on. Hence, if the $n$-th iteration is denoted by $(a_{1,1}, \ldots, a_{1,M}\,,\, a_{2,1}, \ldots, a_{2,M})^{(n)}$, then the $(n+1)$-th iteration is given by
\begin{equation}
\begin{split}
(a_{1,1}, & \ldots, a_{1,M}\,,\, a_{2,1}, \ldots, a_{2,M})^{(n+1)}
    \cr
& = (a_{1,1}, \ldots, a_{1,M}\,,\, a_{2,1}, \ldots, a_{2,M})^{(n)} - \mathcal F_{b,\alpha,\Omega}\left((a_{1,1}, \ldots, a_{1,M}\,,\, a_{2,1}, \ldots, a_{2,M})^{(n)}\right)\cdot [\mathcal J^{(n)}]^{-1},
\end{split}
\end{equation}

\noindent where $[\mathcal J^{(n)}]^{-1}$ denotes the inverse of the Jacobian matrix at $(a_{1,1}, \ldots, a_{1,M}\,,\, a_{2,1}, \ldots, a_{2,M})^{(n)}$. To make this iteration converge, it is usually enough to perturb the annulus by assigning a small value to ${a_{1,1}}^{(0)}$ or ${a_{2,1}}^{(0)}$, and leave the other coefficients equal to zero. Our stopping criterion is
\begin{equation}
\max\left|\sum_{k = 1}^M b_{1,k}\sin(m\,k\,\theta)\right| < tol \quad \wedge \quad \max\left|\sum_{k = 1}^M b_{2,k}\sin(m\,k\,\theta)\right| < tol,
\end{equation}

\noindent where $tol = 10^{-11}$. As in the vortex patch problem, $a_{1,1}\cdot a_{2,1}<0$, so, for the sake of coherence, we change eventually the sign of all the coefficients $\{a_{1,k}\}$ and $\{a_{2,k}\}$, in order that, without loss of generality, $a_{1,1}>0$ and $a_{2,1}<0$.

\subsubsection{Numerical experiments}

As we have seen, the procedure to find doubly-connected $m$-fold $V$-states is very similar in the vortex patch and in the quasi-geostrophic problems. However, as evidenced in the simply-connected case, the numerical study of the $V$-states for the quasi-geostrophic problem reveals itself as a much richer task. Indeed, unlike in the vortex patch problem, where we had just two parameters $b$ and $\Omega$, we have now a third parameter $\alpha$. Furthermore, since \eqref{e:QGcondition1a} and \eqref{e:QGcondition2a} are not homogeneous, choosing the mean outer radius not to be equal to one, as we are doing, would introduce a fourth parameter. Therefore, we will limit ourselves here to exposing a few relevant facts.

Theorem \ref{main} states that, for any $\alpha\in(0,1)$, the inner mean radius $b$ must be greater than $b_0$, which is the unique solution of the equation
\begin{equation}
\label{e:b0}
b^2\Lambda_1(b) - \Lambda_1(1) + \frac{1}{2} = 0, \qquad \mbox{with } \Lambda_1(1) = \frac{\Gamma(1 - \alpha)}{2^{1-\alpha}\Gamma^2(1-\frac{\alpha}{2})}\frac{\Gamma(1 + \frac{\alpha}{2})}{\Gamma(2 - \frac{\alpha}{2})}.
\end{equation}

\noindent This is an important difference with the vortex patch problem, where no lower bound for $b$ exists.

In order to obtain $b_0$ (and other relevant quantities), we need to compute $\Lambda_n(b)$ accurately:
\begin{align}
\Lambda_n(b) & = \frac{\Gamma(\frac{\alpha}{2})}{2^{1-\alpha}\Gamma(1 - \frac{\alpha}{2})}\frac{(\frac{\alpha}{2})_n}{n!}b^{n-1}F\left(\frac{\alpha}{2}, n + \frac{\alpha}{2}, n + 1, b^2\right)
    \cr
& = \frac{b^{n-1}\Gamma(\frac{\alpha}{2} + n)}{2^{1-\alpha}\Gamma(1 - \frac{\alpha}{2})n!}F\left(\frac{\alpha}{2}, n + \frac{\alpha}{2}, n + 1, b^2\right).
\end{align}

\noindent The hypergeometric function $F$ is commonly implemented in most scientific packages; for instance, in \textsc{Matlab\copyright}, $F(a, b, c, z)$ can be evaluated by means of the command \verb"hypergeom([a, b], c, z)", so we can find the only value $b_0$ satisfying \eqref{e:b0} efficiently and with the greatest possible accuracy, by means a simple bisection technique. In Figure \ref{f:alphab0}, we have plotted $b_0$ against 200 different values of $\alpha$, i.e., $\alpha = 10^{-4}, 10^{-3}$, and $\alpha = 0.005, 0.01, \ldots, 0.995$. Observe that $b_0$ tends very quickly to $1$; for example, $b_0(0.76) > 0.99$. On the other hand, $b_0(0) = 0$, which is coherent with the vortex patch problem, where there is no lower bound for $b$.
\begin{figure}[!htb]
\center
\includegraphics[width=0.5\textwidth, clip=true]{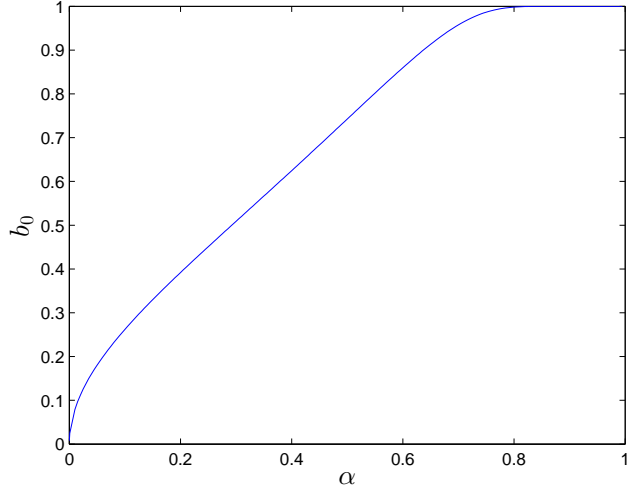}
\caption{\small{Unique solution of \eqref{e:b0}, $b_0$, in function of $\alpha$.}}
\label{f:alphab0}
\end{figure}

However, we have discovered in our numerical experiments that \eqref{e:b0} is not sharp. Indeed, we have been able to find $V$-states with $b$ much smaller than $b_0$. In all the numerical experiments in this section, we take $N = 64\times m$ nodes. In Figure \ref{f:VState1holeAlpha0_9b0_2Omegas}, we have plotted $V$-states corresponding to $\alpha = 0.9$ and $b =0.2$, where $b_0(0.9) - 1 = \mathcal O(10^{-9})$. On the left-hand side, we have started to bifurcate from $\Omega_4^+(0.2) = 0.4077\ldots$. Observe that $z_2$ is practically a circumference, for all $\Omega$. Moreover, the $V$-state corresponding to $\Omega = 0.4076$, in black, is practically a circular annulus, whereas the outer boundary of the $V$-state corresponding to $\Omega = 0.296$, in red, has a marked star shape. For $\Omega$ slightly smaller than $\Omega = 0.296$, the numerical experiments become instable. On the right-hand side, we have started to bifurcate from $\Omega_4^-(0.2) = -1.3055\ldots$. The most remarkable fact is that $\Omega$ is always negative, i.e, the $V$-states rotate clockwise, which is an important difference with respect to the vortex patch problem. On the other hand, $z_1$ is practically a circumference, for all $\Omega$. The $V$-state corresponding to $\Omega = -1.305$, in black, is practically a circular annulus, whereas the inner boundary of the $V$-state corresponding to $\Omega = -0.849$, in red, has a marked star shape. For $\Omega$ slightly larger than $\Omega = -0.849$, the numerical experiments become instable.

\begin{figure}[!htb]
\center
\includegraphics[width=0.5\textwidth, clip=true]{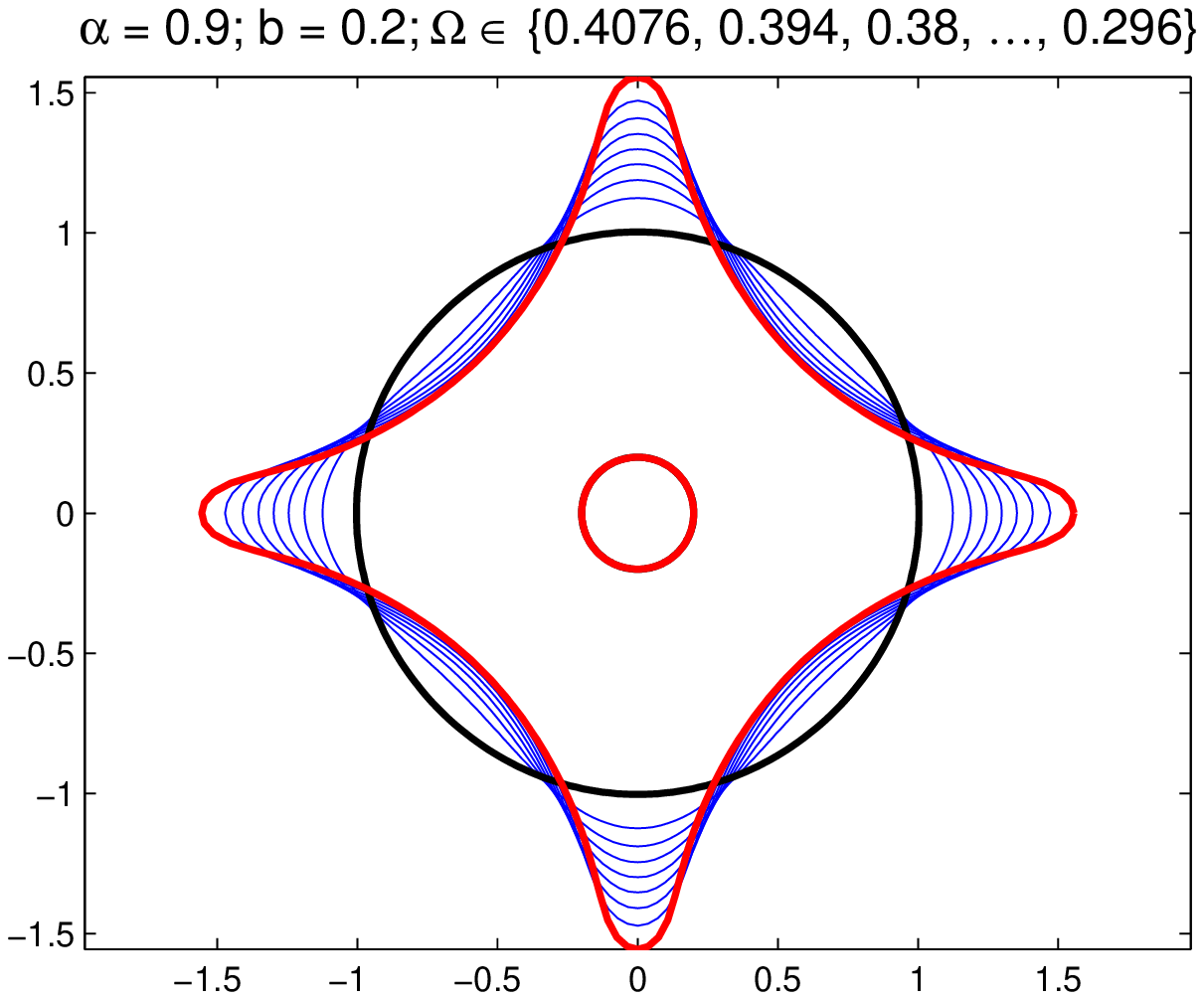}~
\includegraphics[width=0.5\textwidth, clip=true]{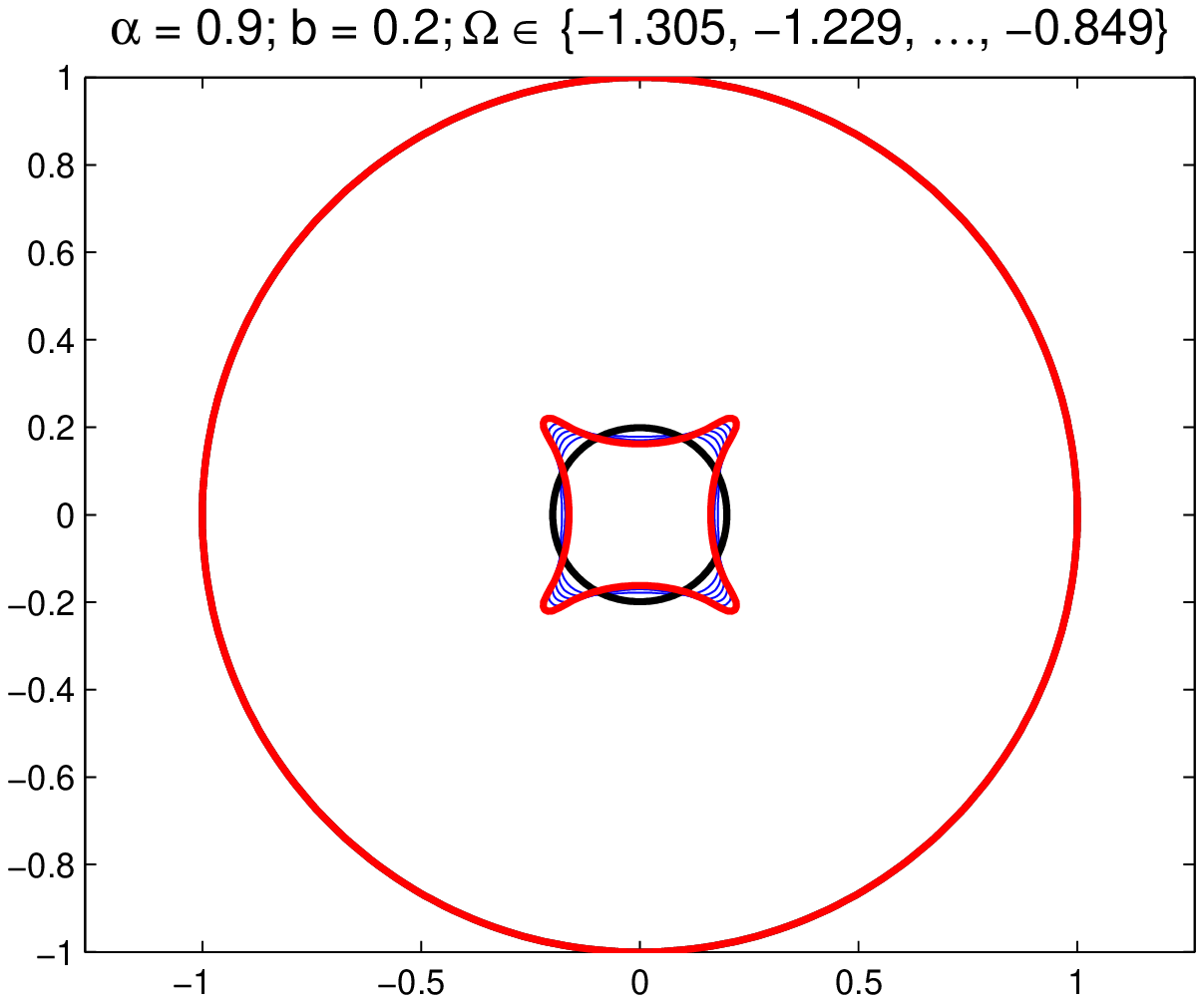}
\caption{\small{4-fold $V$-states corresponding to $\alpha = 0.9$, for different values of $\Omega$. On the left-hand side, we have started to bifurcate from $\Omega_4^+(0.2) = 0.4077\ldots$; on the right-hand side, from $\Omega_4^-(0.2) = -1.3055\ldots$.}}
\label{f:VState1holeAlpha0_9b0_2Omegas}
\end{figure}

Figure \ref{f:VState1holeAlpha0_9b0_2Omegas} shows the obvious parallelism with \cite{H-F-M-V} in the vortex patch problem: as $b$ becomes smaller, bifurcating at $\Omega_m^+(b)$ yields doubly-connected $V$-states closer and closer to simply-connected $V$-states; whereas, bifurcating at $\Omega_m^-(b)$ yields double-connected $V$-states closer and closer to the unit circumference. This explains why there are two bifurcation values of $\Omega$ in the doubly-connected case, and just one single bifurcation value of $\Omega$ in the simply-connected case. Nonetheless, unlike what would have happened in the vortex patch problem, the $V$-states corresponding to $\Omega = 0.296$ and to $\Omega = -0.849$ seem to have developed no singularity. Again, as in the simply-connected case, the explanation is given by the loss of monotonicity in the bifurcation curves of $a_{1,1}$ and $a_{2,1}$ in \eqref{e:z1z2cos} with respect to $\Omega$, plotted in Figure \ref{f:bifurcation1hole}, which predicts the existence of saddle-node bifurcation points. Observe also that, when we bifurcate from $\Omega_4^+(0.2)$, $a_{1,1}$ clearly dominates; whereas, when we bifurcate from $\Omega_4^-(0.2)$, $a_{2, 1}$ dominates and $a_{1,1}$ is of the order of $\mathcal O(10^{-6})$. This confirms our previous observations.

\begin{figure}[!htb]
\center
\includegraphics[width=0.5\textwidth, clip=true]{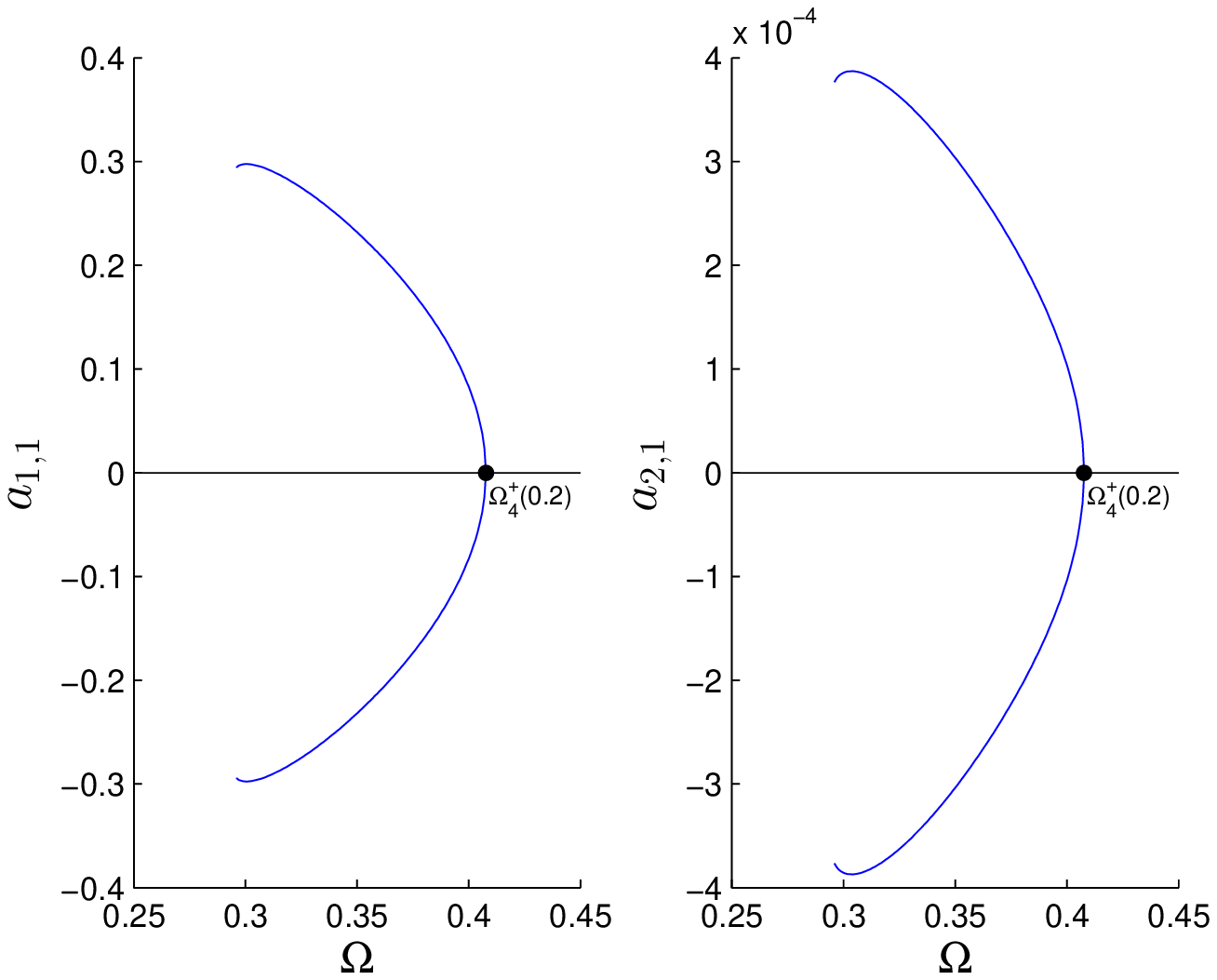}~
\includegraphics[width=0.5\textwidth, clip=true]{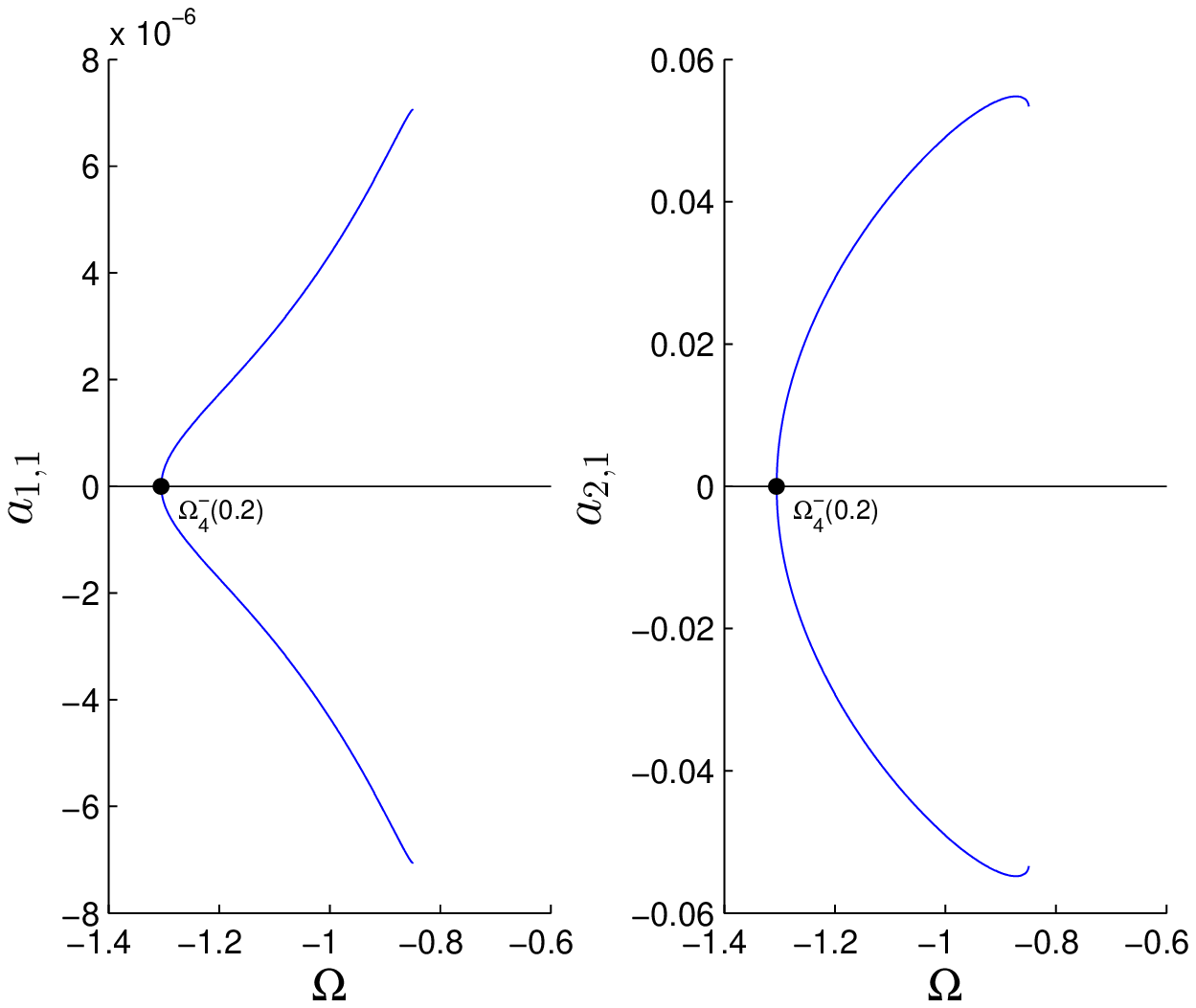}
\caption{\small{Bifurcation curves, for $m = 4$, $\alpha = 0.9$, and $b = 0.2$. The leftmost two correspond to a bifurcation from $\Omega_4^+(0.2)$, whereas the rightmost two correspond to a bifurcation from $\Omega_4^-(0.2)$. Remember that $a_{1,1}$ and $a_{2,1}$ are such that $a_{1,1}\cdot a_{2,1} < 0$.}}
\label{f:bifurcation1hole}
\end{figure}

Following the procedure explained in the simply-connected case (where we set $\lambda^{(A)} = 0$, $\lambda^{(B)} = \lambda^{(A)} + [(\Omega^{(B)} - \Omega^{(A)})^2 + (a_{1,1}^{(B)} - a_{1,1}^{(A)})^2 + (a_{2,1}^{(B)} - a_{2,1}^{(A)})^2]^{1/2}$, and so on), we have continued the bifurcation curves at $\Omega = 0.29507$, and $\Omega = -0.84878$, until $\Omega = 0.3$ and $\Omega = -0.86$, respectively, as is shown in Figure \ref{f:bifurcation1holeupdated}. It is still possible to go a bit further, but, in order not to lose accuracy, a larger number of nodes is convenient. The pieces of curve beyond the saddle-node bifurcation points are shown in thicker stroke.

\begin{figure}[!htb]
\center
\includegraphics[width=0.5\textwidth, clip=true]{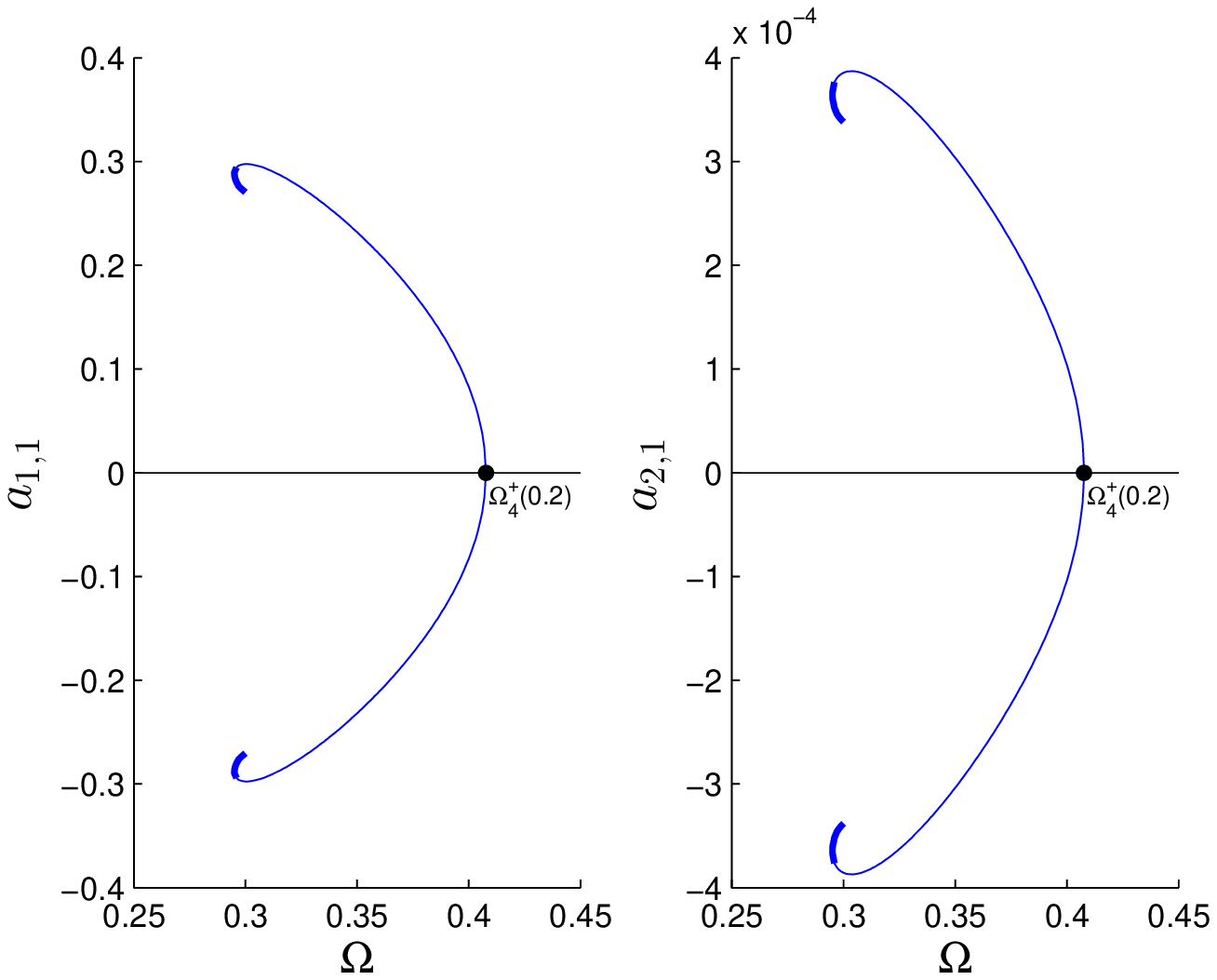}~
\includegraphics[width=0.5\textwidth, clip=true]{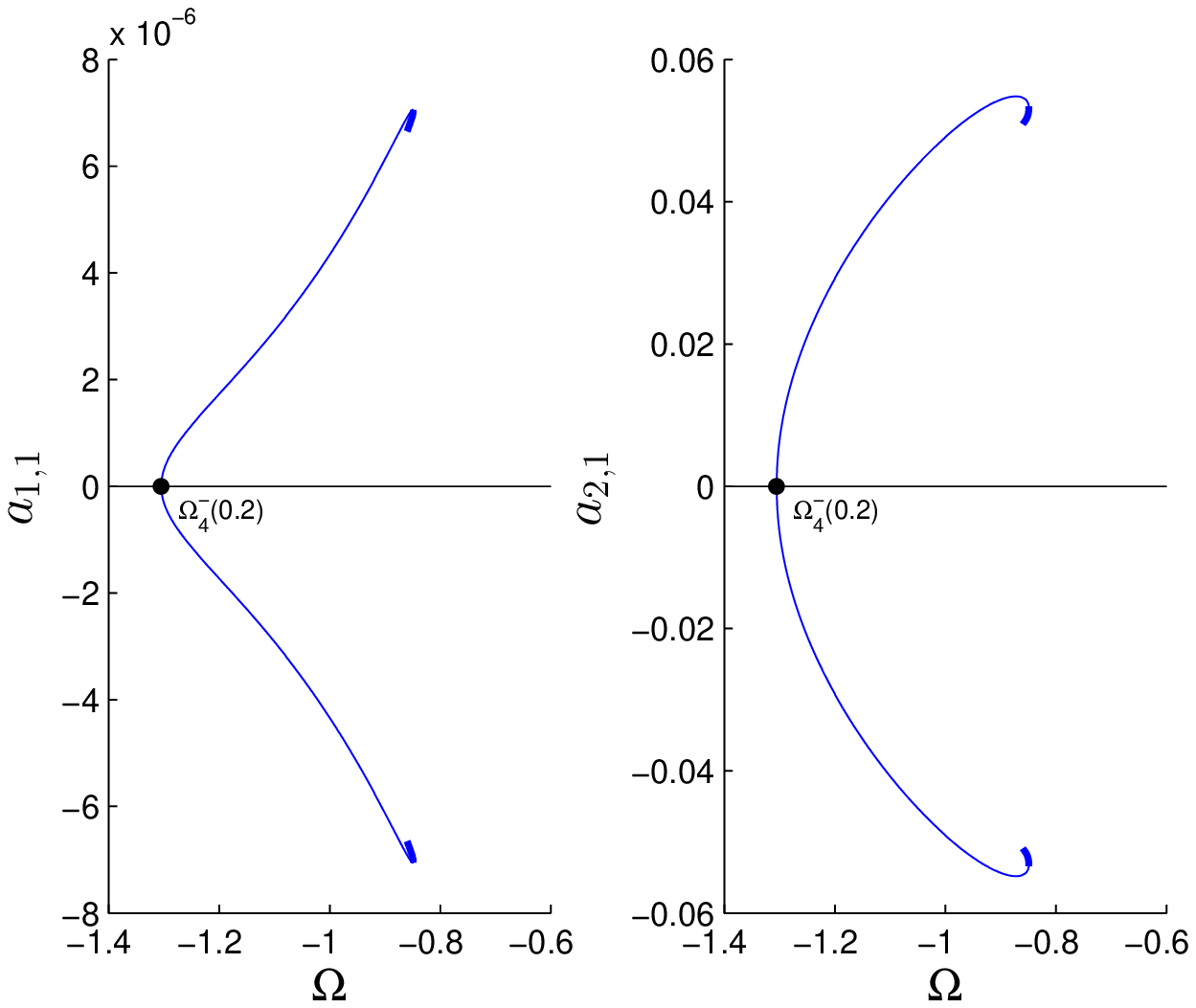}
\caption{\small{Extended bifurcation curves, for $m = 4$, $\alpha = 0.9$, and $b = 0.2$. The leftmost two correspond to a bifurcation from $\Omega_4^+(0.2)$, whereas the rightmost two correspond to a bifurcation from $\Omega_4^-(0.2)$. Remember that $a_{1,1}$ and $a_{2,1}$ are such that $a_{1,1}\cdot a_{2,1} < 0$.}}
\label{f:bifurcation1holeupdated}
\end{figure}

In Figure \ref{f:VState1holeAlpha0_9b_2}, we have plotted the $V$-states corresponding to $\Omega = 3$ and $\Omega = -0.86$, but beyond the saddle-node bifurcation points. The differences with respect to the $V$-states in red from Figure \ref{f:VState1holeAlpha0_9b0_2Omegas} are evident. It would be interested to calculate which kind of limiting $V$-states develops. Finally, whether the lower bound restriction for $b$ can be ignored completely or not is another relevant question.

\begin{figure}[!htb]
\center
\includegraphics[width=0.5\textwidth, clip=true]{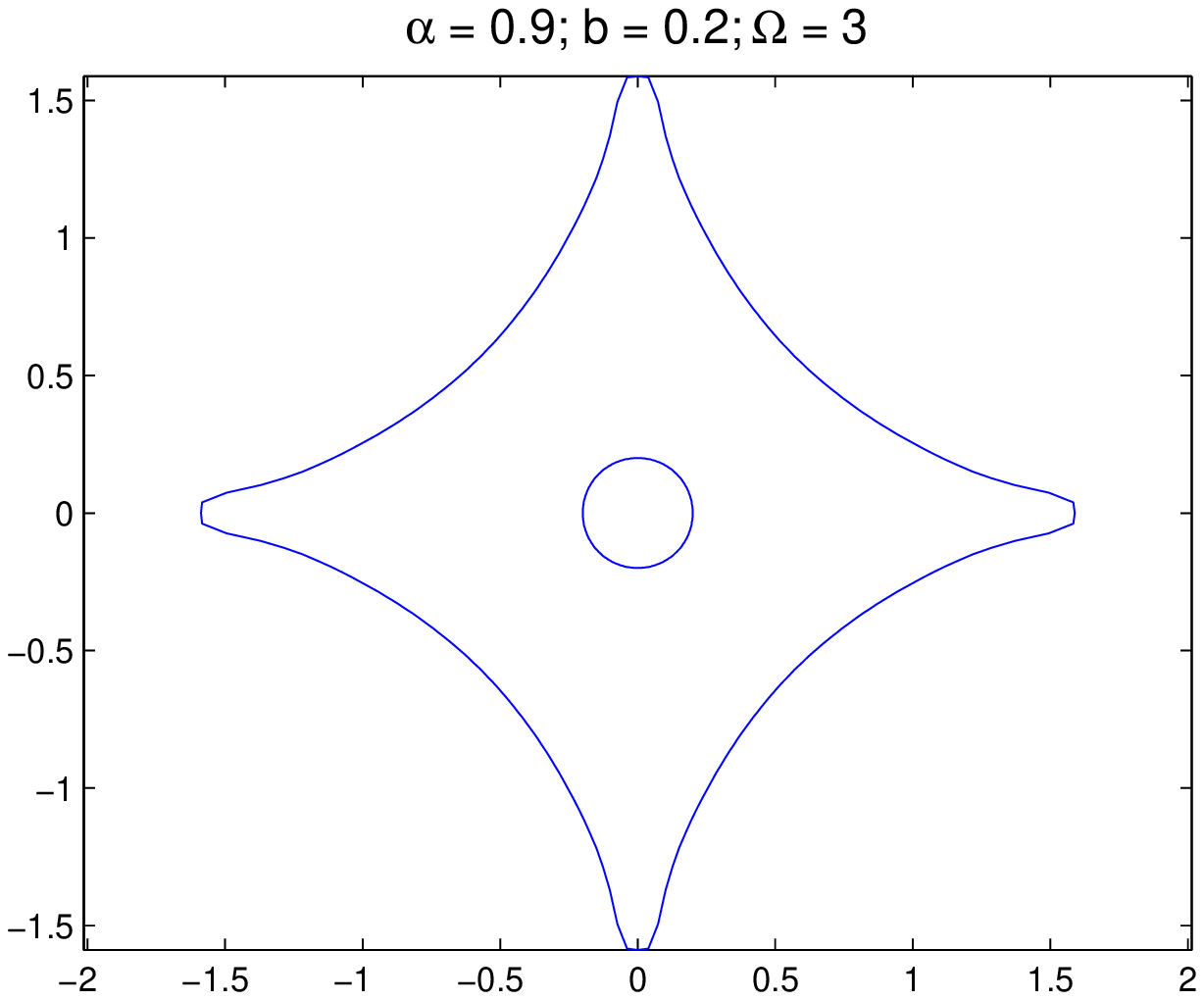}~
\includegraphics[width=0.5\textwidth, clip=true]{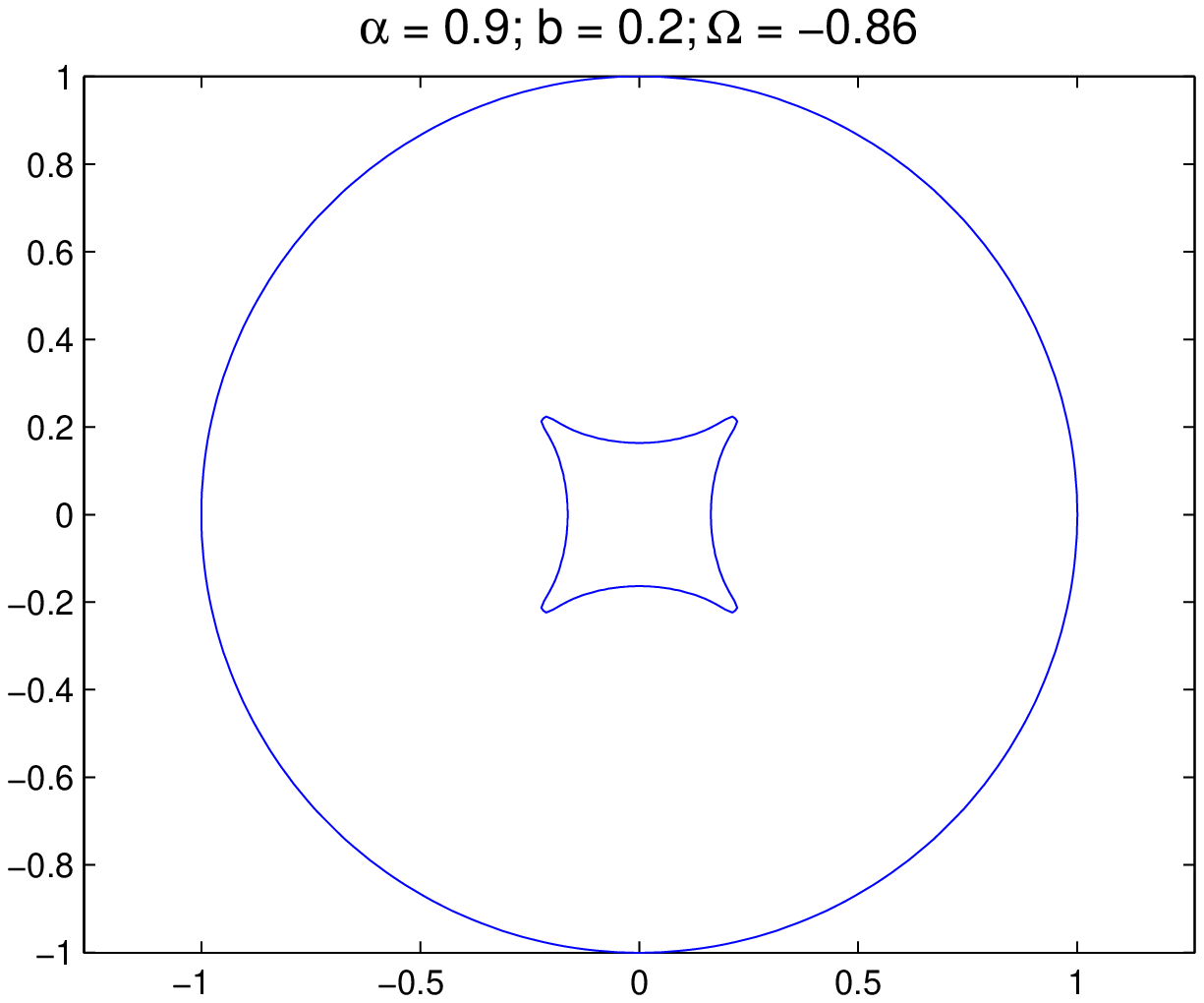}
\caption{\small{$V$-states beyond the saddle-node bifurcation points.}}
\label{f:VState1holeAlpha0_9b_2}
\end{figure}

Another relevant fact is the possibility to find examples where $V$-states exist for all $\Omega\in[\Omega_m^-, \Omega_m^+]$; such situation was also found in the vortex patch problem. We have considered, for instance, $m = 4$, $\alpha = 0.5$, and $b = 0.65$, with $\Omega_4^+(0.65) = 0.1480\ldots$ and $\Omega_4^-(0.65) = 0.08168\ldots$. Observe that $b_0(0.5) = 0.7424\ldots$, so we are violating the restriction on $b_0$ again. We have computed successfully all the $V$-states with $\Omega = 0.082, 0.083, \ldots, 0.148$. On the left-hand side of Figure \ref{f:bifurcation1whole}, we plot the $V$-states corresponding to $\Omega = 0.082, 0.093, \ldots, 0.148$. The $V$-state corresponding to $\Omega = 0.148$ (in red), and to $\Omega = 0.082$ (in black), are practically circular annuli (we use no thicker stroke here, because all the $V$-states are very close to each other). On the right-hand side of Figure \ref{f:bifurcation1whole}, we plot the bifurcation curves of $a_{1,1}$ and $a_{2,1}$ in \eqref{e:z1z2cos}, with respect to $\Omega$. As expected, the curves are closed; however, the curve corresponding to $a_{2,1}$ is much more symmetrical and reminds us of an ellipse.
\begin{figure}[!htb]
\center
\includegraphics[width=0.5\textwidth, clip=true]{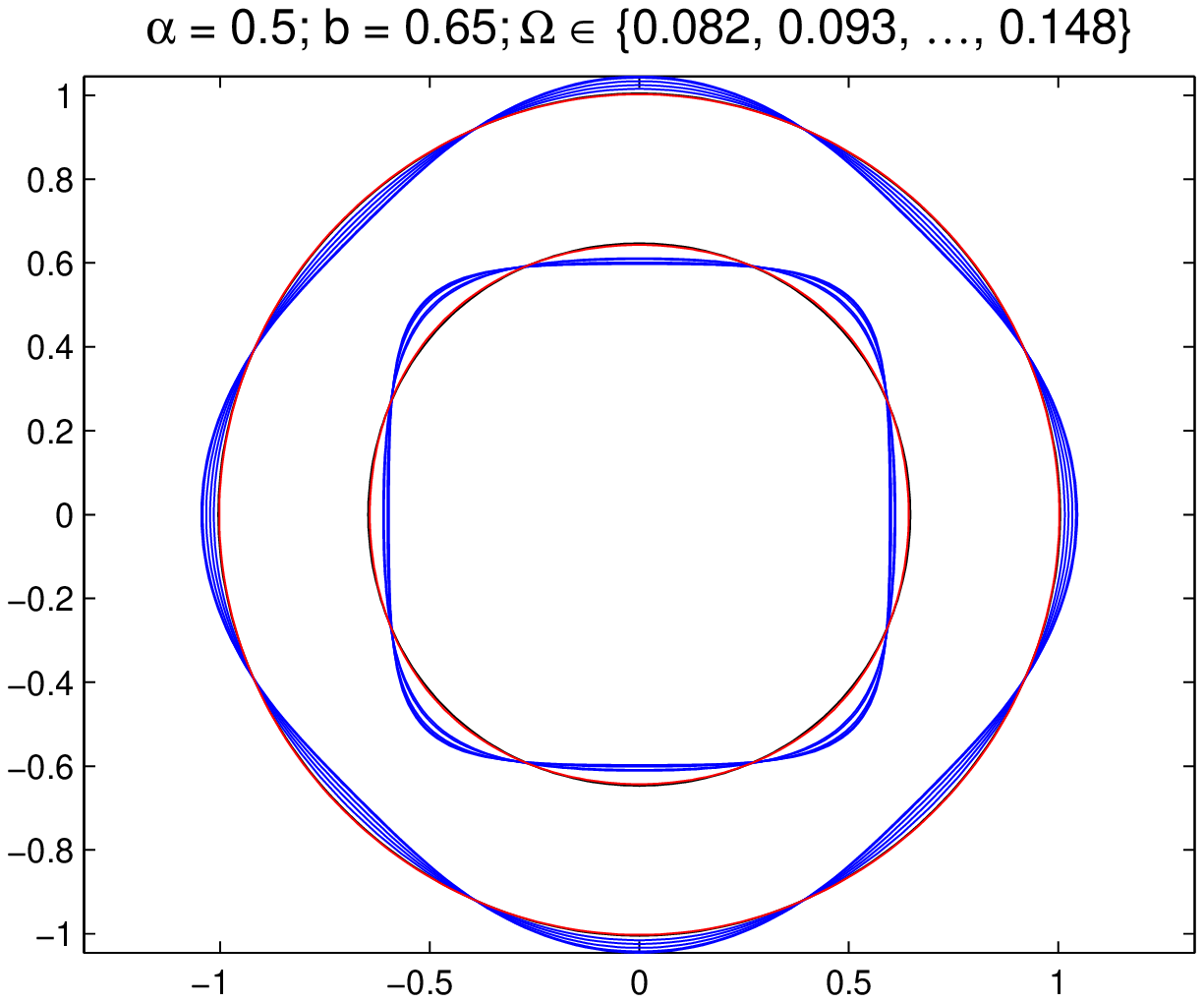}~
\includegraphics[width=0.5\textwidth, clip=true]{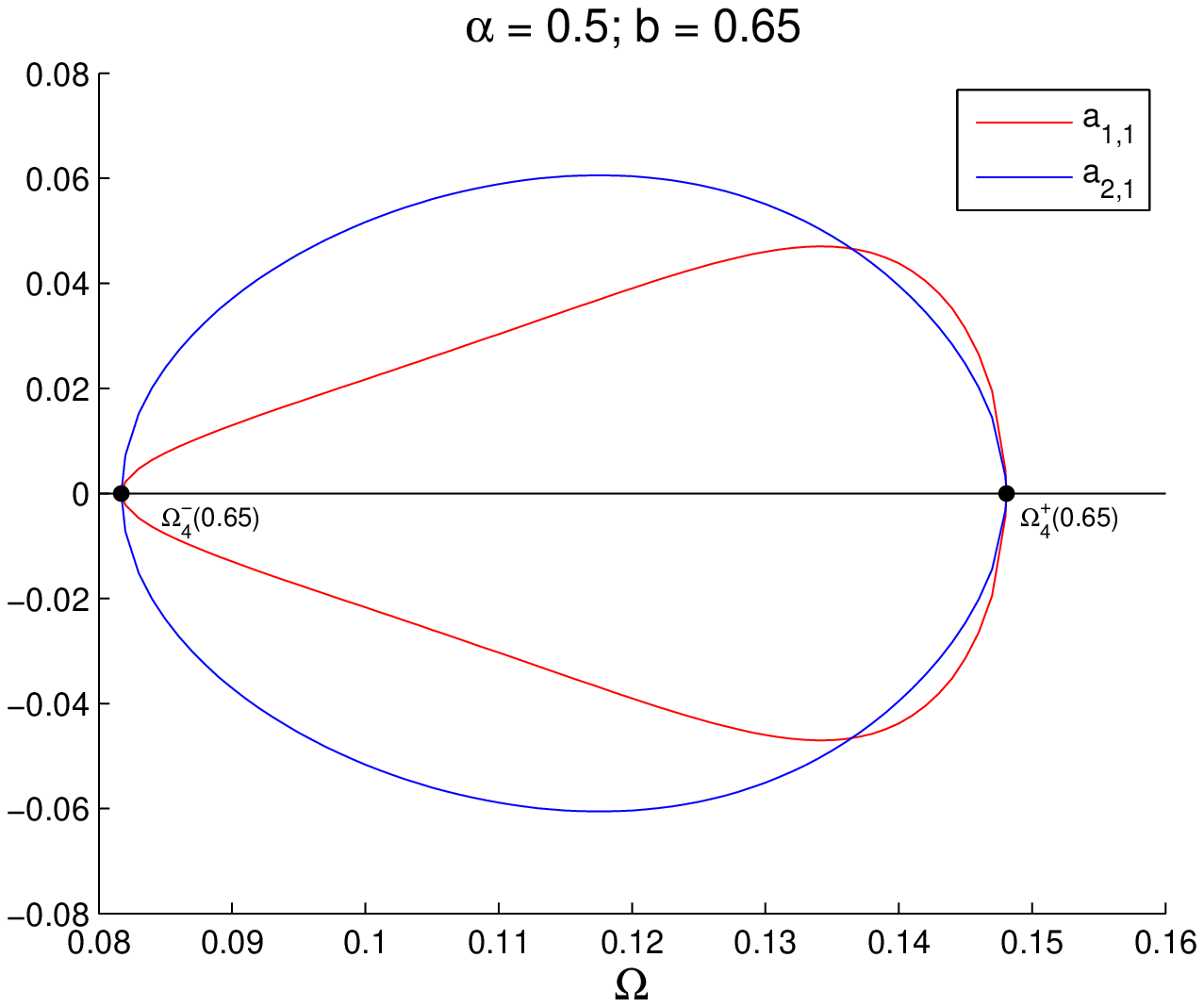}
\caption{\small{Left: 4-fold $V$-states corresponding to $\alpha = 0.5$, $b = 0.65$, for different values of $\Omega$; there are $V$-states for all $\Omega\in[\Omega_m^-, \Omega_m^+]$. Right: bifurcation curves of $a_{1,1}$ and $a_{2,1}$ in \eqref{e:z1z2cos} with respect to $\Omega$. The curve corresponding to $a_{2,1}$ is much more symmetrical. Remember that $a_{1,1}$ and $a_{2,1}$ are such that $a_{1,1}\cdot a_{2,1} < 0$.}}
\label{f:bifurcation1whole}
\end{figure}

Let us finish this section by mentioning the existence of stationary doubly-connected $V$-states when $\alpha > 0$, i.e., $V$-states with $\Omega = 0$. Like the examples with $\Omega < 0$ shown above, they have no particularity from a numerical point of view, yet they are a completely new phenomenon with respect to the vortex patch problem. To obtain them, it is necessary to choose $m$, $\alpha$ and $b$, such that $\Omega_m^- < 0$, but $|\Omega_m^-| \ll 1$, since we bifurcate from the annulus at $\Omega = \Omega_m^-$. We have chosen $m = 4$ and $\alpha = 0.5$, as in the last experiment, but with a $b$ even smaller, $b = 0.5$, in such a way that $\Omega_4^-(0.5) = -0.02760\ldots$. Figure \ref{f:stationary} shows the corresponding stationary $V$-state.
\begin{figure}[!htb]
\center
\includegraphics[width=0.5\textwidth, clip=true]{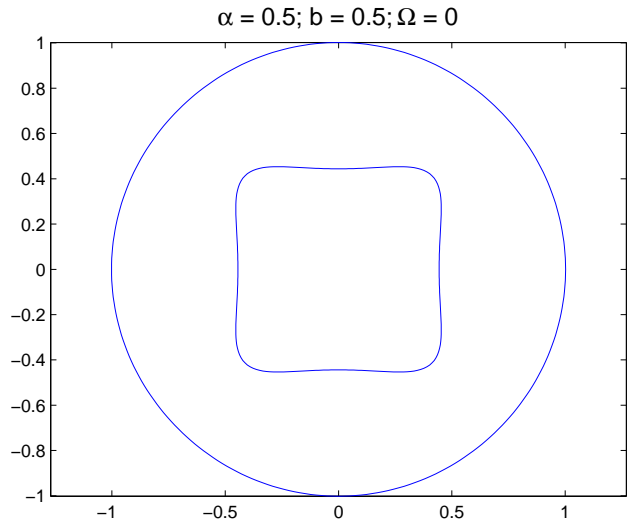}
\caption{\small{Example of a stationary $V$-state.}}
\label{f:stationary}
\end{figure}

\begin{ackname}
Francisco de la Hoz was supported by the Spanish Ministry of Economy and Competitiveness, with the project
MTM2011-24054, and by the Basque Government, with the project IT641-13. Zineb Hassainia and Taoufik Hmidi were partially supported by  the ANR
project Dyficolti ANR-13-BS01-0003-01.

 \end{ackname}

\end{document}